\documentclass[reqno,11pt]{amsart}
\usepackage{amsmath,amsfonts,amssymb,amsxtra,latexsym,amscd,enumerate,enumitem,amsthm,verbatim,here}
\usepackage{mathtools}
\usepackage{array}
\usepackage{booktabs}
\usepackage[table,dvipsnames]{xcolor}

\usepackage{cancel}

\usepackage[margin=1.2in]{geometry}
\numberwithin{equation}{section}

\usepackage{hyperref}
\definecolor{myrefred}{RGB}{180,0,0}
\definecolor{myciteblue}{RGB}{0,70,140} 
\hypersetup{
    colorlinks=true,
    pdfstartview=FitV,
    linkcolor=myrefred,      
    citecolor=myciteblue,    
    urlcolor=black
}
\definecolor{labelkey}{rgb}{0.6,0,0}

\newcommand{\R}{\mathbb{R}}

\newcommand{\C}{\mathbb{C}}

\newcommand{\la}{\langle}
\newcommand{\ra}{\rangle}
\newcommand{\dft}{{\widetilde{\mathcal{F}}} }
\newcommand{\Fr}{\mathrm{Fr} }
\newcommand{\refl}{{\mathcal{R}} }
\newcommand{\bigo}{{\mathcal{O}} }
\newcommand{\test}{\phi_{x_0,h} }
\newcommand{\testhi}{\psi_{\nu,x_0,h} }
\newcommand{\mapa}{S_{q,a} }
\newcommand{\mapb}{S_{\tilde q,b} }
\newcommand{\mapc}{S_{q,b} }

\numberwithin{equation}{section} 

\newtheorem{theorem}{Theorem}[section]
\newtheorem{lemma}[theorem]{Lemma}
\newtheorem{corollary}[theorem]{Corollary}

\newtheorem{proposition}[theorem]{Proposition}
\newtheorem{definition}[theorem]{Definition}
\theoremstyle{remark}
\newtheorem{remark}[theorem]{Remark}

\begin{document}
\title[Inverse Modified Scattering for Cubic NLS with a Repulsive Delta Potential]{Inverse Modified Scattering for Cubic NLS with a Repulsive Delta Potential}

\author{Mengyi Xie}
\address{Yale University}
\email{mengyi.xie@yale.edu}

\maketitle
\begin{abstract}
We study the one-dimensional cubic nonlinear Schr\"odinger equation with a repulsive delta potential and a localized inhomogeneous coefficient. We prove small-data modified scattering and construct the associated vector-valued modified scattering map, whose two components encode the coupling of the
frequencies $\xi$ and $-\xi$ induced by the point interaction. We show that this map determines both the strength of the delta potential and the inhomogeneous coefficient. Quantitatively, the delta strength is recovered with Lipschitz stability, while the inhomogeneous coefficients satisfy a H\"older stability estimate. To our knowledge, these are the first recovery and stability results for a modified scattering map in the presence of an external potential.
\end{abstract}

\setcounter{tocdepth}{1}
\tableofcontents

\section{Introduction}\label{sec:intro}
We study the one-dimensional cubic nonlinear Schr\"odinger equation with a repulsive delta potential and a localized inhomogeneous cubic coefficient, 
\begin{equation}\label{main}
    i\partial_t u\,=\,H_qu\,+\,\lambda\left(1+a\right)|u|^2 u,\qquad u(t=0)=u_0.
\end{equation}
Here $u:\R_t\times \R_x\to \C$, $\lambda\in \R$, $a:\R\to \R$ is localized and
\begin{equation}\label{eq:def_H}
    H_q=-\partial_x^2+q\delta,\quad q>0,
\end{equation}
is the standard self-adjoint realization of the repulsive delta Schr\"odinger operator. All objects associated with the linear Hamiltonian depend on the strength $q$. Whenever a single value of $q$ is fixed, we suppress this dependence and write, for example $H$, $U(t)$ and $\dft$ in place of $H_q$, $U_q(t)$ and $\dft_q$. We restore the $q$-subscripts only when comparing two different point interactions.

The equation \eqref{main} enjoys conservation of mass
\begin{equation}\label{eq:mass}
    \mathbf{M}(u)=\int_\R |u(t,x)|^2dx=\mathbf{M}(u_0),
\end{equation}
and energy
\begin{equation}\label{eq:energy}
    \mathbf{E}(u)=\int_\R |\partial_xu(t,x)|^2 dx+q|u(t,0)|^2 +\frac{\lambda}{2}\int (1+a)|u(t,x)|^4 dx =\mathbf{E}(u_0).
\end{equation}
The purpose of this paper is twofold. First, we establish small-data modified scattering for \eqref{main}. Second, we study the inverse problem for the corresponding modified scattering map, proving both uniqueness and quantitative stability in the recovery of the inhomogeneity $a$.

\medskip

We use the following class of coefficients. 
\begin{definition}\label{def:admissible}
    We say $a:\R\to \R$ is admissible if $a\in L^1\cap L^\infty$, $xa\in L^2$, and $a'\in L^1$ and define the admissible norm as
    \[
    \|a\|_{\mathrm{ad}}:=\|a\|_{L^1}+\|a\|_{L^\infty}+\|xa\|_{L^2}+\|a'\|_{L^1}. 
    \]
\end{definition}
Our first result is the small-data modified scattering theory.
\begin{theorem}\label{thm:modified_scattering}
    Fix $q>0$, $\lambda\in \R$, $\beta\in(0,\frac{1}{8})$, and let $a$ be admissible. There exists a positive $\varepsilon_0$, such that, if 
    \begin{equation}
        u_0\in \Sigma:= \left\{f\in H^1,\,\, xf\in L^2 \right\}, \qquad \|u_0\|_\Sigma:=\|u_0\|_{H^1}+\|xu_0\|_{L^2}\leq  \varepsilon_0,
    \end{equation}
    then the initial-value problem \eqref{main} admits a unique global solution $u\in  C([0,\infty);H^1(\R))$. Moreover,
    \begin{equation}\label{eq:decay_u_thm}
        \|u(t)\|_{L^\infty}\lesssim \varepsilon_0 \la t\ra ^{-\frac{1}{2}}, \quad t\geq 0.
    \end{equation}
    Finally, there exists a unique asymptotic profile $W_a=W_a(u_0)\in L^\infty$, such that
    \begin{equation}\label{eq:phase_correction}
        u(t)=(2it)^{-\frac{1}{2}} e^{\frac{ix^2}{4t}}W_a\left(\frac{x}{2t}\right) e^{-\frac{i\lambda}{2}|W_a\left(\frac{x}{2t}\right)|^2\log t}+\bigo_{L^\infty}(t^{-\frac{3}{4}+\beta}),\quad \text{as }t\to \infty. 
    \end{equation}
\end{theorem}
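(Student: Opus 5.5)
The plan is to follow the distorted-Fourier space-time resonance approach for modified scattering with potentials, in the style of Germain--Pusateri--Rousset and Chen--Pusateri. We filter the solution through the linear flow: set $f(t)=U(-t)u(t)$ and $\tilde f(t,\xi)=\dft f(t)(\xi)$. For the repulsive delta potential the generalized eigenfunctions are explicit, namely plane waves with transmission coefficient $T(\xi)=\frac{2i\xi}{2i\xi-q}$ and reflection coefficient $R(\xi)=\frac{q}{2i\xi-q}$. Both are smooth and bounded for $q>0$, with $T(0)=0$ and $R(0)=-1$, and there are no bound states. The bootstrap norms will be
\[
\|f(t)\|_{X_T}:=\sup_{t\le T}\Big(\|\tilde f(t)\|_{L^\infty_\xi}+\la t\ra^{-\alpha}\|\partial_\xi \tilde f(t)\|_{L^2}+\la t\ra^{-\alpha}\|u(t)\|_{H^1}\Big),
\]
with a small $\alpha>0$ tied to $\beta$. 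Local well-posedness in $H^1$ is standard, because $H_q$ generates a unitary group, its form domain is $H^1$, and Strichartz estimates hold. The $\Sigma$-type norm is controlled by $\|x u_0\|_{L^2}\sim\|\partial_\xi\tilde u_0\|_{L^2}$. First we prove the linear dispersive estimate
\[
\|U(t)f\|_{L^\infty}\lesssim t^{-1/2}\|\tilde f\|_{L^\infty}+t^{-3/4}\|\partial_\xi\tilde f\|_{L^2}.
\]
We also prove the sharper asymptotic
\[
U(t)f=(2it)^{-1/2}e^{ix^2/4t}\tilde f(x/2t)+\bigo(t^{-3/4}\|\tilde f\|_{H^1}),
\]
which holds with $\dft$ normalized appropriately. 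This is done by stationary phase on the explicit kernel, split into $x>0$ and $x<0$. Together with the bootstrap, these give \eqref{eq:decay_u_thm}.

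Next comes the nonlinear analysis. Duhamel gives
\[
\partial_t\tilde f(t,\xi)=-i\lambda\,\dft\big[e^{itH}(1+a)|u|^2u\big](\xi).
\]
We split this into the translation-invariant part $|u|^2u$ and the localized part $a|u|^2u$. For the localized part, $\|a|u|^2u\|_{L^1\cap L^2}\lesssim\|a\|_{\mathrm{ad}}\varepsilon^3t^{-3/2}$, which is integrable, so its contribution to $\tilde f$ converges absolutely. Its contribution to $\partial_\xi\tilde f$ requires the weight $x$ falling on $a$, hence $xa\in L^2$, and possibly an integration by parts in $x$ using $a'\in L^1$. For the cubic part, we write it as a trilinear distorted-Fourier integral. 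Using the explicit eigenfunctions, the nonlinear spectral measure decomposes into a $\delta(\xi-\eta+\sigma-\zeta)$-type singular part (with signs $\pm$ accounting for reflection) plus regular parts given by principal-value kernels, following Chen--Pusateri. The singular part, evaluated by stationary phase at $\eta=\sigma=\zeta=\pm\xi$, produces the resonant ODE
\[
\partial_t\tilde f(t,\xi)=-\frac{i\lambda}{2t}\Big(|\tilde f(\xi)|^2\tilde f(\xi)+\text{terms coupling }\tilde f(\xi),\tilde f(-\xi)\Big)+\bigo(t^{-1-\delta}).
\]
Since $u$ itself, via the linear asymptotic above, satisfies $u\approx(2it)^{-1/2}e^{ix^2/4t}\tilde f(x/2t)$, the cleanest route is to compute $\partial_t\tilde f$ by pairing $|u|^2u$ with the asymptotic profile directly. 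This gives exactly $-\frac{i\lambda}{2t}|\tilde f(\xi)|^2\tilde f(\xi)$ up to errors of order $t^{-1-(1/4-\alpha)}\varepsilon^3$, with the reflection terms absorbed into the errors or the coupled system. The $L^\infty_\xi$ bound then follows from the phase-invariance of $|\tilde f|$ under the ODE. The bound on $\partial_\xi\tilde f$ grows like $t^\alpha$ and is obtained by the weighted estimate, using a vector field adapted to $H_q$. Here $x\partial$ does not commute with $H_q$, but it does so up to terms localized at $x=0$, which are handled like the $a$-terms.

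Finally, we set $B(t,\xi)=\frac{\lambda}{2}\int_1^t|\tilde f(s,\xi)|^2\frac{ds}{s}$. Then $e^{iB}\tilde f$ is Cauchy in $L^\infty$ with rate $t^{-1/4+\alpha}$, and we define $W_a$ as its limit. Since $\big||\tilde f(t)|^2-|W_a|^2\big|\lesssim t^{-1/4+\alpha}$, we get $B=\frac{\lambda}{2}|W_a|^2\log t+\text{const}+o(1)$, and the constant is absorbed into $W_a$. Combined with the linear asymptotic, this yields \eqref{eq:phase_correction} with error $t^{-3/4+\beta}$, where $\beta$ is taken just above $\alpha$ so that the constraint $\beta<1/8$ leaves room for the bootstrap.

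The main obstacle is the weighted $\partial_\xi\tilde f$ estimate. Differentiating the distorted Fourier transform produces both $x$-weights and boundary terms from the point interaction at $x=0$, where $u$ decays only like $t^{-1/2}$. These boundary terms must be shown to be integrable with at most $t^\alpha$ growth. I would try first to exploit that $T(0)=0$ and $R$ is smooth, so the singular frequency $\xi=0$ carries a vanishing factor. I would then treat the boundary commutator $[x\partial_x,q\delta]$ through the local-decay-type bound $|u(t,0)|\lesssim t^{-1/2}$ together with an extra time-integration by parts, which uses the phase $t\xi^2$ at nonzero frequency.
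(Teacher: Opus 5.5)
Your overall architecture matches the paper: a bootstrap on $\|\tilde f\|_{L^\infty}$ plus a slowly growing weighted norm, integrable $L^\infty$ forcing from $a$, a logarithmic phase, and an $O(t^{-3/4+\beta})$ error. The gap is the structure of the leading dynamics, which is wrong for the transform you describe. With $\dft$ built from the eigenfunctions $e(x,\xi)$ (incident wave, transmitted $T$, reflected $R$), the forward asymptotic is not scalar. Since $\tilde f(0)=0$, Proposition~\ref{prop:asy_V} gives
\[
U(t)f=(2it)^{-\frac12}e^{\frac{ix^2}{4t}}\Big[T(|\xi|)\tilde f(\xi)+R(|\xi|)\tilde f(-\xi)\Big]_{\xi=\frac{x}{2t}}+\bigo\big(t^{-\frac34}\|\tilde f\|_{H^1}\big).
\]
Your scalar formula is what this transform gives as $t\to-\infty$, not as $t\to+\infty$. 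Consequently the resonant equation for $\tilde f$ is the $2\times2$ system \eqref{eq:matrix-pde}. Set $P:=T(|\xi|)\tilde f(\xi)+R(|\xi|)\tilde f(-\xi)$. The off-diagonal entry of the system is $\overline{T}R\,\big(|P(\xi)|^2-|P(-\xi)|^2\big)$, which is generically of the same order $t^{-1}$ as the diagonal entries, so it cannot be absorbed into the errors. Hence $|\tilde f(\xi)|$ is not conserved; only $|\tilde f(\xi)|^2+|\tilde f(-\xi)|^2$ is, because the matrix is Hermitian. Your phase-corrected profile $e^{iB(t,\xi)}\tilde f$ therefore need not converge.

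The missing idea is the diagonalization by the unitary matrix in \eqref{eq:def_B}, which rests on $|T|^2+|R|^2=1$ and $T\overline R+\overline TR=0$. Equivalently, you can work with the outgoing profile $P$, which obeys the scalar law $i\partial_tP=\frac{\lambda}{2t}|P|^2P+\dots$ and equals the paper's first diagonal variable times the unimodular factor $T+R$. Your hedge ``with $\dft$ normalized appropriately'' is correct only if you commit to this outgoing transform and run every estimate for it. As written, your ODE with coupling terms is inconsistent with your scalar $L^\infty$ argument and your scalar phase correction.

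Second, the weighted bound, which you yourself call the main obstacle, is left heuristic, and the input you propose is too weak. In the paper, the $H^1_\xi$ norm of the cubic term is controlled through $\|V(t)\psi\|_{H^1}\lesssim\|\psi\|_{H^1}$ for $\psi(0)=0$ and $\|V(t)^{-1}\phi\|_{H^1}\lesssim t^{1/2}|\phi(0)|+\|\phi\|_{H^1}$ (Proposition~\ref{prop:homo_sobolev_est}). The trace term $t^{1/2}|\phi(0)|$ is harmless only because of a cancellation at the origin. From $T(0)=0$, $R(0)=-1$ and $\Fr(0)=\frac12$, Corollary~\ref{cor:estimate_V} gives $|[V(t)w](0)|\lesssim t^{-1/4}\|w\|_{H^1}$, i.e.\ $|u(t,0)|\lesssim\varepsilon_0t^{-3/4+\beta}$. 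This makes the trace term $s^{-1}s^{1/2}|[V(s)w](0)|^3\lesssim s^{-5/4+3\beta}$, which is integrable. With only $|u(t,0)|\lesssim t^{-1/2}$, as you propose, the same term is of size $s^{-1/2}$ and the weighted norm would grow like $t^{1/2}$.

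The $a$-forcing has a similar issue: the difficulty is not the weight $x$ falling on $a$, but the factor $2is\xi$ produced when $\partial_\xi$ hits $e^{is\xi^2}$. Pointwise in time, the reflected and low-frequency pieces only give $s\|a|u|^2u\|_{L^2}\lesssim s^{-1/2}$, which is not integrable. Integrating by parts in $x$ instead produces $s\,a'|u|^2u$, which need not lie in $L^2_x$ when only $a'\in L^1$. The paper resolves this with the distorted local smoothing estimate (Lemma~\ref{lem:dft_local_smoothing}) in $L^1_xL^2_s$, combined with the identity $s\partial_xu=\frac{ix}{2}u+\frac12M(s)D(s)\partial_x\big(V(s)w\big)$.
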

The model case $a=0, q=0$, is the classical one-dimensional cubic NLS. This equation is long-range: small localized solutions decay at the linear rate $t^{-\frac{1}{2}}$, but ordinary linear scattering fails and must be replaced by scattering with a logarithmic phase correction. This modified scattering theory has been obtained by several methods, including inverse scattering, normal forms, space-time resonances, and wave-packet testing; see, for instance, \cite{HN,DZ,LS,KP,IT}.

Modified scattering in the presence of a potential was later studied in several works. The work most related to our problem studies the homogeneous repulsive delta case $a=0, q>0$, by Masaki-Murphy-Segata \cite{MMS}. Their argument adapts the Hayashi–Naumkin \cite{HN} method to the distorted Fourier transform associated with the repulsive delta operator. In the normalization used in the present paper, $U=e^{-itH}$ is diagonalized by the distorted Fourier transform $\dft$ as
\begin{equation}
    U(t)=\dft^{-1} e^{-it\xi^2}\dft. 
\end{equation}
The free Schr\"odinger factorization is correspondingly written as 
\[
U_0(t)=M(t)D(t)\mathcal{F}M(t),
\]
where
\begin{equation}
    \left(M(t)f\right)(x)=e^{\frac{ix^2}{4t}}f(x),\qquad \left( D(t)f\right)(x)=(2it)^{-\frac{1}{2}}f\left(\frac{x}{2t}\right).
\end{equation}
For the delta flow, one defines $V(t)$ by
\begin{equation}\label{eq:def_V}
    U(t)=M(t)D(t)V(t)\dft.
\end{equation}
Equivalently, if 
\begin{equation}
    w(t)=\dft U(-t)u(t),
\end{equation}
then
\begin{equation}
    u(t)=M(t)D(t)V(t)w(t).
\end{equation}
This factorization is useful because it transfers the long-time analysis of $u$ to an equation for the distorted profile $w$. In the free case, $V_0(t)$ equals $e^{-\frac{i}{4t}\partial_x^2}$ and becomes $\mathrm{Id}$ as $t\to \infty$. Thus the leading profile equation is a scalar long-range ODE. For the delta potential, however, the asymptotics of $V(t)$ and $V^{-1}(t)$ involve both $w(\xi)$ and its reflected value $w(-\xi)$. Thus the leading dynamics is not a scalar equation, but a two-component system for
\begin{equation}\label{eq:def_vec_w}
    \vec w(t,\xi)=\big( w(t,\xi), \,\,w(t,-\xi)\big)^T.
\end{equation}
This system has a Hermitian leading matrix. Denoting the diagonalized and renormalized variables by 
\[
\vec g(t,\xi)=\big( g_1(t,\xi), \,\,g_2(t,\xi)\big)^T,
\]
then $\vec g$ converges as $t\to \infty$. The scalar profile $\phi_a(u_0)$ in Theorem \ref{thm:modified_scattering} is recovered from this limit vector after undoing the diagonalization and applying the asymptotics of $V(t)$.

\medskip

For the inverse problem, we retain the full two-component limiting profile rather than only the scalar physical-space profile. For each $q>0$ and admissible coefficient $a$, fix a small-data threshold $\varepsilon_{q,a}>0$ for which Theorem \ref{thm:modified_scattering} and the construction below hold, and define
\begin{equation}\label{eq:def_small_data_ball}
    \mathcal{B}_{q,a} :=\left\{u_0\in\Sigma:\|u_0\|_\Sigma\leq\varepsilon_{q,a}\right\}.
\end{equation}
We define the \emph{modified scattering map}
\begin{equation}\label{eq:def_scattering_map}
    \begin{split}
        S_{q,a}:\mathcal{B}_{q,a}&\longrightarrow
        \left(L^\infty\cap L^2\right)\times\left(L^\infty\cap L^2\right),\\
        u_0&\longmapsto \lim_{t\to\infty}\vec g(t)
        =\left(\lim_{t\to\infty}g_1(t),
        \lim_{t\to\infty}g_2(t)\right)^T.
    \end{split}
\end{equation}
The vector structure is essential: the delta potential creates reflected interactions, which are naturally encoded by the two components of $\vec w$. When $q$ is fixed, we abbreviate $S_{q,a}$ by $S_a$.

\medskip

Our second result states that this modified scattering map determines the strength of $\delta$ potential and the localized inhomogeneity. 
\begin{theorem}\label{thm:uniqueness}
    Fix $\lambda\neq 0$. Let $q$, $\tilde q>0$, and let $a$, $b$ be admissible coefficients. Let $S_{q,a}:\mathcal{B}_{q,a}\to \left(L^\infty\cap L^2 \right)\times\left(L^\infty\cap L^2 \right)$ and $S_{\tilde q, b}:\mathcal{B}_{\tilde q,b}\to \left(L^\infty\cap L^2 \right) \times \left(L^\infty\cap L^2 \right)$ be the corresponding modified scattering maps. If $S_{q,a}=S_{\tilde q,b}$ on $\mathcal{B}_{q,a}\cap \mathcal{B}_{\tilde q,b}$, then 
    \[
    q=\tilde q,\quad\text{ and }\quad  a=b \,\,\,\text{almost everywhere on }\R.
    \]
\end{theorem}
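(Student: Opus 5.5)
We would prove Theorem \ref{thm:uniqueness} by a small-amplitude expansion of the scattering map. Replace $u_0$ by $\varepsilon\varphi$ with $\varphi\in\Sigma$ fixed and $\varepsilon\to0$, and expand $S_{q,a}(\varepsilon\varphi)$ in powers of $\varepsilon$. The linear term should be determined by the linear flow alone, and hence by $q$. The cubic term should contain the coefficient $a$ through a Duhamel integral of the form $\int_0^\infty \dft U(-s)\big[a\,|U(s)\varphi|^2U(s)\varphi\big]\,ds$. The plan is to recover $q$ from the first-order term and then $a$ from the third-order term.

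\emph{Step 1 (linear term, recovering $q$).} We first check, from the construction of $\vec g$ (diagonalization of the Hermitian leading matrix and the log-phase renormalization), that $\vec g(t)=\vec w(t)\cdot(1+O(\varepsilon^2))$ uniformly in $t$. This gives
\[
\lim_{\varepsilon\to0}\varepsilon^{-1}S_{q,a}(\varepsilon\varphi)=P_q\big(\dft_q\varphi(\xi),\dft_q\varphi(-\xi)\big)^T,
\]
where $P_q$ is the (possibly $\xi$-dependent) unitary matrix that diagonalizes the leading system. The equality $S_{q,a}=S_{\tilde q,b}$ then yields $P_q\dft_q\varphi=P_{\tilde q}\dft_{\tilde q}\varphi$ for all $\varphi$. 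The transmission and reflection coefficients $t_q(\xi)=\frac{2i\xi}{2i\xi-q}$ and $r_q(\xi)=\frac{q}{2i\xi-q}$ enter explicitly. Testing with even and odd $\varphi$ separates the two parity sectors: odd functions do not feel the delta, while even ones acquire the phase $\frac{2i|\xi|+q}{2i|\xi|-q}$ or its analogue. Comparing these explicit expressions forces $q=\tilde q$.

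\emph{Step 2 (cubic term, recovering $a$).} With $q=\tilde q$, both maps share the same linear part and the same renormalization. The remaining difference comes only from the inhomogeneity, so it is natural to subtract. The $\lambda|u|^2u$ contributions then cancel at order $\varepsilon^3$, as does the log-phase, since it depends only on $|W|^2$ at leading order. What remains is
\[
0=\lim_{\varepsilon\to0}\varepsilon^{-3}\big(S_{q,a}-S_{q,b}\big)(\varepsilon\varphi)=-i\lambda P_q\int_0^\infty \dft U(-s)\big[(a-b)|U(s)\varphi|^2U(s)\varphi\big]\,ds .
\]
Pairing this with $\dft\psi$ and using Plancherel for $\dft$ and $\lambda\neq0$, we obtain
\[
\int_0^\infty\!\!\int_\R (a-b)(x)\,|U(s)\varphi|^2\,U(s)\varphi\,\overline{U(s)\psi}\,dx\,ds=0 .
\]
The integral converges absolutely because of the $s^{-1}$-type decay, with $|U(s)\varphi|^2\lesssim s^{-1}$ and the localization $a\in L^1$. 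By polarization in $\varphi$, this extends to quadrilinear forms $\int_0^\infty\int(a-b)\,U\varphi_1\overline{U\varphi_2}U\varphi_3\overline{U\varphi_4}$.

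\emph{Step 3 (density).} We must show that products of the form $\int_0^\infty U\varphi_1\overline{U\varphi_2}U\varphi_3\overline{U\varphi_4}\,ds$ span a set dense in $L^\infty$ for the weak-$*$ topology against $L^1$, at least among tests of $a-b$. This is the main obstacle. Our first attempt would use Gaussians: for $\varphi$ supported away from $0$ and moving away from the origin, $U(s)\varphi$ is close to the free evolution plus a reflected wave. Alternatively, take concentrated data $\varphi_j=h^{-1/2}\phi((x-x_0)/h)$ and rescale time $s\sim h^2$, so that $\int_0^\infty|U(s)\varphi|^4ds$ localizes to $c\,h^{-1}$ near $x_0$, and more precisely $\int_0^\infty\int(a-b)|U\varphi|^4\approx c\,(a-b)(x_0)$ as $h\to0$. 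The notation $\test$ in the preamble suggests exactly this wave-packet test. For $x_0\neq0$ and $h\ll|x_0|$, the delta interaction is negligible on the relevant time scale, with error controlled by $\|a'\|_{L^1}$ and $xa\in L^2$. Hence $(a-b)(x_0)=0$ at every Lebesgue point $x_0\neq0$, which gives $a=b$ almost everywhere. The delicate points are controlling the tail $s\gg h^2$, using the $s^{-1}$ dispersive bound together with $a\in L^1$, and justifying the $\varepsilon$-expansion uniformly in time within the modified scattering framework.
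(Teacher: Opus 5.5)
Your Steps 1 and 2 follow the paper's architecture. The linearization $S_{q,a}(\varepsilon\psi)=\varepsilon B_q^*\overrightarrow{\dft_q\psi}+O(\varepsilon^3)$ determines $T_q$ and hence $q$. The cubic term, paired with $B^*\overrightarrow{\dft\psi}$, gives $\int_0^\infty\!\int(a-b)|U(t)\psi|^4\,dx\,dt=0$. Two corrections there.
\begin{itemize}
\item $\vec g(t)=\vec w(t)(1+O(\varepsilon^2))$ cannot hold uniformly in $t$, because the phases $\Theta_j(t)$ grow like $\varepsilon^2\log t$. Write instead $S_{q,a}(\varepsilon\psi)=B_q^*\vec w(1)+\int_1^\infty\partial_t\vec g\,dt$ and use $\partial_t\vec g=O(\varepsilon^3t^{-5/4+\beta})$. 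Also, odd data do see $q$ through $B_q^*$: for odd $\psi$, $[L_q\psi]_1=\overline{(T_q+R_q)(|\xi|)}\,\widehat\psi$. This is harmless.
\item The cancellation of the homogeneous part at order $\varepsilon^3$ is not automatic. The remainder $\mathcal E(t,\vec w)$ left after the asymptotic diagonalization is itself cubic, hence of the same order as the $a$-term. Moreover, the renormalizing phases for $a$ and $b$ differ. The paper needs a Gr\"onwall stability bound $\|w_a-w_b\|_{L^\infty}\lesssim\varepsilon^3\la t\ra^{\beta}$, $\|w_a-w_b\|_{H^1}\lesssim\varepsilon^3\la t\ra^{2\beta}$ to get $\mathcal E(t,\vec w_a)-\mathcal E(t,\vec w_b)=O(\varepsilon^5t^{-5/4+3\beta})$. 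It then uses $\|\mathcal D-I\|\lesssim\varepsilon^2\log t$ against integrable cubic terms to remove the phases.
\end{itemize}

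The genuine gap is Step 3: the localization you assert is false in one dimension. Take $\varphi_h=h^{-1/2}\phi((x-x_0)/h)$. Scaling gives $\int_0^\infty|U_0(s)\varphi_h(x)|^4ds=m((x-x_0)/h)$, where $m(y)=\int_0^\infty|U_0(\tau)\phi(y)|^4d\tau$. Since $|U_0(\tau)\phi(y)|\approx(2\tau)^{-1/2}|\widehat\phi(y/2\tau)|$, one gets $m(y)\sim c_\pm|y|^{-1}$ as $y\to\pm\infty$, with $c_\pm=\frac12\int_{\pm\sigma>0}|\widehat\phi(\sigma)|^4d\sigma$. This is just the non-integrable decay $\|U_0(\tau)\phi\|_{L^4}^4\sim\tau^{-1}$.

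So $\int_0^\infty\!\int(a-b)|U_0\varphi_h|^4$ is neither $\approx c\,(a-b)(x_0)$ nor concentrated at $s\sim h^2$. The times $h^2\ll s\lesssim h$ contribute $h\int_h^1\bigl(c_+(a-b)(x_0+r)+c_-(a-b)(x_0-r)\bigr)\frac{dr}{r}$, which is of size $h\log(1/h)$. This dominates the $O(h)$ contribution from $s\lesssim h^2$.

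Your proposed tail control therefore fails. The bound $|U\varphi_h|^4\lesssim h^2s^{-2}$ together with $a\in L^1$ gives $h^2/T$, which is small relative to $h\log(1/h)$ only if $T\gtrsim h/\log(1/h)$. On these times the packet spreads over an $O(1)$ region and passes through the origin, so the delta cannot be dismissed by a time-scale argument either. One would have to use $|R(\xi)|\lesssim q/|\xi|$ at frequencies $\sim h^{-1}$.

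Your idea is plausibly repairable: normalize by $h\log(1/h)$, use convergence of logarithmic averages at Lebesgue points, and bound the reflected part by $O(h^{1/2}\log(1/h))$ in $L^\infty$. As written, however, the recovery step rests on a wrong asymptotic.

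The paper sidesteps dispersion entirely with high-velocity packets $\psi_\nu=e^{i\nu x}\phi$, $\phi\in C_c^\infty((0,\infty))$, moving away from the delta. The reflected part is $O(\nu^{-1}\la t\ra^{-1/2})$ in $L^\infty$. Galilean invariance plus the substitution $t=s/(2\nu)$ turn $\nu\int_0^\infty\!\int a|U_0(t)\psi_\nu|^4$ into $\frac12\int_0^\infty\!\int a(x)|U_0(\frac{s}{2\nu})\phi(x-s)|^4\,dx\,ds$. This converges to $\frac12\int|\phi(y)|^4\mathcal A_+(y)\,dy$, so the data determine the one-sided primitives $\mathcal A_\pm$ rather than point values.
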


The recovery of nonlinearities and external potentials from scattering data has a long history, beginning with \cite{MS,EW,Weder97,Weder00,Weder01,Weder01b,Weder02,Watanabe01,SW,Sasaki07,Sasaki08,Watanabe18}. Further developments include analyticity-based approaches to nonlinear scattering maps, as in \cite{CG,PS}, as well as more recent recovery results for Schr\"odinger and wave equations in \cite{SUW, SS,KMV,Murphy,HMG,CKMV}. These works primarily concern settings in which the usual scattering operator or scattering map is available. 

The first recovery result in a genuinely modified-scattering regime is due to Chen–Murphy \cite{CM1}. They studied the one-dimensional cubic NLS with a localized inhomogeneous coefficient and no external potential, and proved that the modified scattering map uniquely determines the coefficient. In subsequent work, Chen–Murphy \cite{CM2} studied quantitative stability for this and several related nonlinear inverse problems with ordinary scattering. For the cubic modified-scattering problem, they obtained a \emph{logarithmic} stability estimate under an additional $L^2$ assumption on the derivatives of the coefficients.

\medskip

We next state a quantitative version of Theorem
\ref{thm:uniqueness}. The estimates arising from the proof naturally control the one-sided primitives of the inhomogeneous coefficients. For $x>0$, define
\begin{equation}\label{eq:def_A+}
    \mathcal{A}_+(x):=\int_x^\infty a(y)dy,\qquad \mathcal{B}_+(x):=\int_x^\infty b(y)dy,
\end{equation}
while for $x<0$, define
\begin{equation}\label{eq:def_A-}
    \mathcal{A}_-(x):=\int_{-\infty}^x a(y)dy,\qquad \mathcal{B}_-(x):=\int_{-\infty}^x b(y)dy.
\end{equation}
Set
\begin{equation}\label{eq:def_common_radius}
    \varepsilon_{q,\widetilde q,a,b}:=\min\left\{\varepsilon_{q,a},\varepsilon_{\widetilde q,b}\right\}.
\end{equation}
Thus both $S_{q,a}$ and $S_{\widetilde q,b}$ are defined whenever
$\|\varphi\|_\Sigma\leq\varepsilon_{q,\widetilde q,a,b}$. We measure the distance between the two maps by
\begin{equation}\label{def:SaSb}
    \big\|S_{q,a}-S_{\tilde q,b}\big\|:=\sup_{\substack{\varphi\in\mathcal{S}(\R)\\0<\|\varphi\|_\Sigma\leq \varepsilon_{q,\tilde q, a,b} }} \frac{\big\|S_{q,a}(\varphi)-S_{\tilde q,b}(\varphi)\big\|_{L^2\times L^2}}{\|\varphi\|_{\Sigma}}.
\end{equation}
The uniform $L^2$-bound for the limiting profiles shows that this quantity is finite.

\begin{theorem}\label{thm:stability}
Fix $\lambda\neq0$.  Let $q,\widetilde q>0$, and let $a,b$ be admissible. There is a constant $C_{q,\widetilde q}>0$ depending on $q$ and $\tilde q$, such that
\begin{equation}\label{eq:stability_q}
    |q-\widetilde q|\leq C_{q,\widetilde q}\,\|S_{q,a}-S_{\widetilde q,b}\|.
\end{equation}
Moreover, there are constants $\kappa_{q,\widetilde q,a,b}>0$ and $C_{q,\widetilde q,a,b}>0$ such that, whenever
$\|S_{q,a}-S_{\widetilde q,b}\|\leq\kappa_{q,\widetilde q,a,b}$,
\begin{equation}\label{eq:stability_A_B}
\begin{split}
    &\|\mathcal A_+-\mathcal B_+\|_{L^\infty(\mathbb R_+)}
      +\|\mathcal A_--\mathcal B_-\|_{L^\infty(\mathbb R_-)} \leq C_{q,\widetilde q,a,b}\,\|S_{q,a}-S_{\widetilde q,b}\|^{1/232}.
\end{split}
\end{equation}
The constants depend only on the displayed delta strengths, $\lambda$, and the
admissibility norms of $a$ and $b$.  If in addition $a',b'\in L^2(\mathbb R)$,
then
\begin{equation}\label{eq:stability_a_b}
    \|a-b\|_{L^\infty(\mathbb R)}
    \leq C_{q,\widetilde q,a,b}\,\|S_{q,a}-S_{\widetilde q,b}\|^{1/696},
\end{equation}
where the constant may also depend on $\|a'\|_{L^2}$ and $\|b'\|_{L^2}$.
\end{theorem}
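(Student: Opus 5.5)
The plan is to expand the scattering map in powers of the data and read off $q$ from the linear term and $a$ from the cubic term. Take $\varphi=\varepsilon\phi$ with $\phi\in\mathcal S$, $\|\phi\|_\Sigma\le 1$, and $\varepsilon$ small.

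\textbf{Step 1: expansion of the map.} Using the Duhamel formula for $w=\dft U(-t)u$ together with the asymptotics of $V(t)$ used in the proof of Theorem~\ref{thm:modified_scattering}, I would establish
\[
S_{q,a}(\varepsilon\phi)=\varepsilon\,\vec{L}_q\phi+\varepsilon^3\lambda\,\vec N_{q,a}(\phi)+\bigo_{L^2}(\varepsilon^{5-}).
\]
Here $\vec L_q\phi$ is the diagonalized vector built from $(\dft_q\phi(\xi),\dft_q\phi(-\xi))$. The cubic term $\vec N_{q,a}$ is the limit of the renormalized Duhamel integral and splits into a part independent of $a$ (the resonant, logarithmic-phase part) and a part linear in $a$:
\[
\int_0^\infty\!\!\int a(x)\,|U_q(t)\phi|^2\,\overline{U_q(t)\psi}\,U_q(t)\phi\,dx\,dt .
\]
This last expression is obtained by pairing with a test function $\psi$.

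\textbf{Step 2: recovering $q$ (Lipschitz).} Divide by $\varepsilon$ and let $\varepsilon\to0$ in \eqref{def:SaSb}. This gives $\|\vec L_q\phi-\vec L_{\tilde q}\phi\|_{L^2}\lesssim\|S_{q,a}-S_{\tilde q,b}\|$, up to an $O(\varepsilon^2)$ error that disappears in the limit. The distorted transform has explicit transmission and reflection coefficients,
\[
T_q(\xi)=\frac{2i\xi}{2i\xi-q},\qquad R_q(\xi)=\frac{q}{2i\xi-q}.
\]
Choosing a fixed $\phi$ concentrated near one frequency, $\|\vec L_q\phi-\vec L_{\tilde q}\phi\|$ becomes comparable to $|R_q(\xi_0)-R_{\tilde q}(\xi_0)|\sim|q-\tilde q|$, with constants depending only on $q$ and $\tilde q$. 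This gives \eqref{eq:stability_q} and, in particular, $q=\tilde q$ in Theorem~\ref{thm:uniqueness}.

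\textbf{Step 3: recovering the primitives (H\"older).} Once $|q-\tilde q|$ is controlled, I would compare the cubic coefficients. Take $\varepsilon=\|S_{q,a}-S_{\tilde q,b}\|^{\theta}$ and divide by $\varepsilon^3$. The terms independent of $a$ differ by $O(|q-\tilde q|)$, so
\[
\Bigl|\int_0^\infty\!\!\int (a-b)\,|U\phi|^2\,\overline{U\psi}\,U\phi\,dx\,dt\Bigr|\lesssim \delta\varepsilon^{-2}+\varepsilon^{2-}+\delta .
\]
Next choose wave-packet data $\phi=\test$, localized at position $x_0$ with a large frequency scale $h^{-1}$. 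For $x_0>0$ the packet moves right and passes over the region $\{x>x_0\}$ on a time scale $\sim h$. Stationary phase should then reduce the space–time integral to
\[
c\,h\!\int_{x_0}^\infty (a-b)\,dy+\text{errors},
\]
where the errors are powers of $h$ times the admissible norms. These errors include the reflection off the delta, which is $O(h)$ in amplitude at high frequency. This yields $|\mathcal A_+(x_0)-\mathcal B_+(x_0)|\lesssim h^{-N}\delta^{c}+h^{c'}$. Optimizing $h$ and $\theta$ gives a power $\delta^{1/232}$; I expect this bookkeeping to be the source of the specific exponent. The case $x_0<0$ is symmetric, with left-moving packets.

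\textbf{Step 4: pointwise bound.} For \eqref{eq:stability_a_b}, apply Gagliardo–Nirenberg interpolation to $F=\mathcal A_\pm-\mathcal B_\pm$, using the bound on $\|F'\|_{L^2}$ supplied by the extra assumption $a',b'\in L^2$:
\[
\|a-b\|_{L^\infty}\lesssim\|F\|_{L^\infty}^{1/3}\,\|F''\|_{L^2}^{2/3}.
\]
This cubes the exponent, giving $1/696=1/(3\cdot232)$.

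\textbf{Main obstacle.} The hardest step is Step 3. One needs the wave-packet asymptotics of the $a$-dependent cubic term to hold uniformly, with explicit polynomial control of the errors in $h$. One also needs to handle the reflected component $\dft\phi(-\xi)$, which creates cross terms that must be shown to be lower order or to contribute in a known way. I would try first to show that these cross terms are $O(h)$ using the explicit $R_q$, and to place the entire remainder analysis on a quantitative version of the Step 1 expansion with $\varepsilon^{5-}$ error.
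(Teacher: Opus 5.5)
Your Steps 2 and 4 are essentially the paper's argument. Two small corrections there. In Step 2, the paper takes $0\le\psi_0\in C_c^\infty((0,\infty))$, so that $[L_q\psi_0]_2(\xi)=T_q(\xi)\widehat{\psi_0}(-\xi)$ for $\xi>0$. It then pairs with the resulting $L^1\cap L^2$ vector, because the cubic remainder is only controlled in $L^\infty$, not $L^2$. In Step 4, the quantity you need is $\|F''\|_{L^2}=\|a'-b'\|_{L^2}$, not $\|F'\|_{L^2}$.

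\textbf{The gap is Step 3, which carries all of \eqref{eq:stability_A_B}.} The first problem is the test packet, which as you describe it has a single scale $h$.
\begin{itemize}
\item If $\phi_{x_0,h}$ is a bump of width $h$, its frequencies are spread over $|\xi|\lesssim h^{-1}$ around $0$. It does not move right at all; it spreads both ways and part of it hits the delta.
\item Adding a carrier $e^{ix/h}$ does not cure this, since the frequency spread is then comparable to the carrier. A fixed fraction of the mass has negative velocity, and the packet disperses on time scale $h^2$, during which it moves only $O(h)$.
\item If instead the width stays fixed and only the frequency grows, you recover the average $\int|\phi(y)|^4\mathcal A_+(y)\,dy$, as in the uniqueness argument of Proposition \ref{prop:recover_a_delta_flow}. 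The smearing error is then $O(1)$, so you get no bound on $\mathcal A_+(x_0)$.
\end{itemize}
The paper uses two independent scales, $\psi_{\nu,x_0,h}=e^{i\nu x}h^{-1/4}\eta((x-x_0)/h)$, with $\nu h\ge1$ and $\nu h^3\ge1$. The reduction is not stationary phase. It starts from the exact Galilean identity $\nu\int_0^\infty\!\int a|U_0(t)\psi_{\nu,x_0,h}|^4\,dx\,dt=\frac12\int_0^\infty\!\int a(x+s)|U_0(\frac{s}{2\nu})\phi_{x_0,h}(x)|^4\,dx\,ds$. The dispersion $U_0(\frac{s}{2\nu})\phi_{x_0,h}-\phi_{x_0,h}$ is bounded by $(s/\nu)^{1/4}h^{-3/4}$ for $s\le T$. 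Since $a-b$ is not compactly supported, the tail $s\ge T$ is handled by the $L^4$ maximal bound of Lemma \ref{lem:maximal_function} (using $\|\phi_{x_0,h}\|_{H^{1/4}}\lesssim1$) together with $\|x(a-b)\|_{L^2}$. Adding the smearing error $h\|a-b\|_{L^\infty}$ and the delta correction $\|U_q\psi_{\nu,x_0,h}-U_0\psi_{\nu,x_0,h}\|_{L^\infty}\lesssim\langle t\rangle^{-1/2}\nu^{-1}h^{-5/4}$ gives the errors $\nu^{-3/4}h^{-5/4}$, $\nu^{-3}h^{-5}$, $(\nu h^3)^{-1/20}$, $h$ of Proposition \ref{prop:primitive_difference}.

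\textbf{The second problem is the bookkeeping.} Since $\|\psi_{\nu,x_0,h}\|_X\sim\nu h^{1/4}\to\infty$, every remainder must carry explicit powers of the test-function norm. Your ``$\varepsilon^{2-}$'' and ``$+\delta$'' hide exactly this.
\begin{itemize}
\item In the paper, the same-$q$ cubic remainder is $\varepsilon^2\|\psi\|_X^6$, which becomes $\varepsilon^2\nu^7h^{3/2}$ after multiplying by $\nu$.
\item Passing from $\tilde q$ to $q$ costs $|q-\tilde q|(\varepsilon\|\psi\|_{L^2}^2+\varepsilon^3\|\psi\|_X^4)$.
\item Your claim that the $a$-independent cubic parts differ by $O(|q-\tilde q|)$ is unproved. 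The paper gets this by differentiating the whole nonlinear flow in the delta strength, with a $t^\gamma$ loss to absorb the logarithmic phase (Propositions \ref{prop:quant_z_big}, \ref{prop:parameter_scattering_remainder}, Corollary \ref{cor:difference_q}).
\end{itemize}
Your alternative, expanding each map to third order and differentiating only the explicit cubic functional in $q$, is plausible. It would avoid that nonlinear bootstrap at the same error sizes. But you must still prove the expansion and the Lipschitz bound, with remainders polynomial in $\|\psi\|_X$.

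Only then do you get the budget $\delta\varepsilon^{-2}\nu^2h^{1/2}$, $\varepsilon^2\nu^7h^{3/2}$, $(\nu h^3)^{-1/20}$ plus lower-order terms, where $\delta=\|S_{q,a}-S_{\tilde q,b}\|$. Balancing at $h=\delta^{1/116}$, $\nu=\delta^{-13/116}$, $\varepsilon=\delta^{45/116}$ produces the exponent $1/232$. Your schematic $h^{-N}\delta^c+h^{c'}$ yields at best some H\"older exponent, not the stated one. You also need to treat the ranges $x_0>h^{-1}$ (Cauchy--Schwarz with $\|x(a-b)\|_{L^2}$) and $0<x_0<3h$ (Lipschitz bound on $\mathcal A_+-\mathcal B_+$) separately.
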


The novelty of the present work is the treatment of an external
potential in a modified-scattering inverse problem. To the best of our knowledge, Theorems \ref{thm:uniqueness} and \ref{thm:stability} give the first uniqueness and quantitative stability results for a modified scattering map in the presence of an external potential. In contrast with the free problem, the repulsive delta potential changes the asymptotic representation of the linear flow: the usual Fourier
transform is replaced by the distorted Fourier transform associated with $H_q$, and transmission and reflection at the origin couple the modes $\xi$ and $-\xi$. Consequently, the natural modified scattering datum is a two-component limiting profile rather than a scalar profile. Our argument shows that this vector-valued datum determines both the strength of the delta potential and the localized inhomogeneous
coefficient. Moreover, it yields Lipschitz stability for the delta strength and a H\"older-type modulus for the inhomogeneous coefficient, rather than the logarithmic modulus obtained for the free modified-scattering problem in \cite{CM2}.


\subsection*{Notation}\label{subsec:notation}
Throughout the paper, all function spaces are over $\R$ unless otherwise indicated. We use the following conventions.

\begin{itemize}
    \item We write $\langle x\rangle:=\left(1+|x|^2\right)^{\frac12}$, $\R_+:=(0,\infty)$ and $\R_-:=(-\infty,0)$.
    For an interval $I\subseteq\R$, we denote by $C_c^\infty(I)$ the space of smooth functions compactly supported in $I$, and by $\mathcal{S}(\R)$ the Schwartz space.
    \item We use the standard mixed-norm notation
    \[
        \|F\|_{L_t^pL_x^q}:=\left( \int_{\R}\|F(t,\cdot)\|_{L_x^q}^p\,dt\right)^{\frac1p},
    \]
    with the usual modification when $p=\infty$.
    \item For nonnegative quantities $A$ and $B$, we write $A\lesssim B$ if $ A\leq CB$ for some constant $C>0$. Subscripts indicate the permitted dependence of the implicit constant; for example, $A\lesssim_{a}B$ means that the constant may depend on $\|a\|_{\mathrm{ad}}$. 
    \item If $X$ is a Banach space and $A\geq0$, then $F=\bigo_X(A)$ means that $ \|F\|_X\lesssim A.$ When $X=L^\infty$, we sometimes omit the subscript and write simply $\bigo(A)$. For scalar or pointwise error terms, subscripts on $\mathcal{O}$ indicate parameter dependence.
    \item We denote the reflection operator by $\refl f(x):=f(-x).$
    For a scalar function $f$, we write
    \[
        \vec f:=\bigl(f,\refl f\bigr)^T.
    \]
    For $\vec f=(f_1,f_2)^T\in X\times X$, we use the product norm $\|\vec f\|_{X\times X}:=\left(\|f_1\|_X^2+\|f_2\|_X^2\right)^{\frac12}.$
    \item Our complex $L^2$-pairing is
    \[
        \langle f,g\rangle_{L^2}
        :=
        \int_{\R}f(x)\overline{g(x)}\,dx,
    \]
    which is linear in the first argument. For vectors, we use the componentwise pairing
    \[
        \langle \vec f,\vec g\rangle_{L^2\times L^2}
        :=
        \langle f_1,g_1\rangle_{L^2}
        +
        \langle f_2,g_2\rangle_{L^2}.
    \]
    We distinguish this from the algebraic row-column pairing
    \[
        (a,b)\cdot(x,y)^T:=ax+by,
    \]
    which does not involve complex conjugation. When the entries are functions, this convention is applied pointwise.

    \item We denote the $2\times2$ diagonal matrix with entries $a,b$ by $\operatorname{diag}(a,b).$
    \item We denote the free Schr\"odinger group by $U_0(t)$, corresponding to $H_0=-\partial_{xx}$. 
\end{itemize}


\subsection*{Acknowledgment}
The author is grateful to Wilhelm Schlag for helpful discussions concerning the operator-theoretic aspects of Schr\"odinger operators with delta interactions. The author also thanks Gong Chen for a helpful conservation on the recovery of the delta strength.


\section{Linear theory for $H$: distorted Fourier transform and $V(t)$}\label{sec:lin}
\subsection{The delta Hamiltonian and the distorted Fourier transform} Throughout the paper we use the unitary Fourier transform
\begin{equation}\label{eq:flat_Fourier}
    \hat f(\xi)=\mathcal F f(\xi):=\frac{1}{\sqrt{2\pi}}\int_{\R}e^{-ix\xi}f(x)\,dx,
    \qquad
    f(x)=\frac{1}{\sqrt{2\pi}}\int_{\R}e^{ix\xi}\hat f(\xi)\,d\xi .
\end{equation}
Throughout Sections \ref{sec:lin} and \ref{sec:direct}, $q>0$ is fixed. In accordance with the convention mentioned in Section \ref{sec:intro}, we suppress the $q$-dependence of all objects. We denote by $ H=-\partial_x^2+q\delta$ the self-adjoint Schr\"odinger operator with a repulsive point interaction at the origin, see \cite{QM}. It is the operator associated with the closed nonnegative quadratic form
\begin{equation}\label{eq:delta_form}
    \mathfrak h[f]=\int_{\R}|f'(x)|^2\,dx+q|f(0)|^2,
    \qquad D(\mathfrak h)=H^1(\R).
\end{equation}
Equivalently,
\begin{equation}\label{eq:domain_H_delta}
    D(H)=\left\{f\in H^1(\R)\cap H^2(\R\setminus\{0\}):
    f'(0^+)-f'(0^-)=qf(0)\right\},
\end{equation}
and $Hf=-f''$ on $\R\setminus\{0\}$. Since $q>0$, $H$ has no negative eigenvalues and no discrete spectrum. Thus
\begin{equation}\label{eq:spectrum_H_delta}
    \sigma(H)=\sigma_{\mathrm{ac}}(H)=[0,\infty).
\end{equation}

\medskip

For $\xi\neq0$, the transmission and reflection coefficients are
\begin{equation}\label{eq:TR}
    T(\xi)=\frac{2i\xi}{2i\xi-q},
    \qquad
    R(\xi)=\frac{q}{2i\xi-q}.
\end{equation}
We use their continuous extensions at $\xi=0$. Thus $T(0)=0$ and $R(0)=-1$. The identities
\begin{equation}\label{eq:TR_identities}
    T=1+R,
    \qquad |T|^2+|R|^2=1,
    \qquad T\overline R+\overline T R=0,\qquad |T+R|=1
\end{equation}
will be used repeatedly. The generalized eigenfunction at energy $\xi^2$ is
\begin{equation}\label{eq:distorted_eigenfunction}
    e(x,\xi)=
    \begin{cases}
        T(|\xi|)e^{ix\xi},& x\xi\geq 0,\\
        e^{ix\xi}+R(|\xi|)e^{-ix\xi},& x\xi<0.
    \end{cases}
\end{equation}
It is continuous at the origin and satisfies
\begin{equation}\label{eq:eigenfunction}
    \partial_xe(0^+,\xi)-\partial_xe(0^-,\xi)=q e(0,\xi),
    \qquad He(\cdot,\xi)=\xi^2 e(\cdot,\xi)
\end{equation}
in the distributional sense. The distorted Fourier transform associated with $H$ is
\begin{equation}\label{eq:dft_definition}
    \widetilde \phi(\xi):=\dft\phi(\xi)
    :=\frac{1}{\sqrt{2\pi}}\int_{\R}\overline{e(x,\xi)}\phi(x)\,dx.
\end{equation}
It extends from $L^1\cap L^2$ to a unitary map on $L^2(\R)$, and its inverse is
\begin{equation}\label{eq:dft_inverse}
    \dft^{-1}\psi(x)=\frac{1}{\sqrt{2\pi}}\int_{\R}e(x,\xi)\psi(\xi)\,d\xi .
\end{equation}
Since $e(x,0)=0$, one has
\begin{equation}\label{eq:dft_zero}
    \dft f(0)=0
\end{equation}
whenever $f\in L^1(\R)$. The propagator $U(t)=e^{-itH}$ is diagonalized by $\dft$:
\begin{equation}\label{eq:linear_flow_dft}
    U(t)=\dft^{-1}e^{-it\xi^2}\dft.
\end{equation}
We recall the elementary comparison between $\dft$ and the flat Fourier transform.
\begin{lemma}\label{lem:relation}
For $\phi,\psi\in L^1(\R)$,
\begin{equation}\label{eq:dft_flat_relation}
    \dft\phi(\xi)=\mathcal F\phi(\xi)
    +\frac{1}{\sqrt{2\pi}}\overline{R(|\xi|)}
    \int_{\R}e^{-i|x||\xi|}\phi(x)\,dx,
\end{equation}
and
\begin{equation}\label{eq:dft_inverse_flat_relation}
    \dft^{-1}\psi(x)=\mathcal F^{-1}\psi(x)
    +\frac{1}{\sqrt{2\pi}}\int_{\R}e^{i|x||\xi|}R(|\xi|)\psi(\xi)\,d\xi .
\end{equation}
\end{lemma}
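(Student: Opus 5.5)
The plan is to substitute the explicit eigenfunction \eqref{eq:distorted_eigenfunction} into the definitions \eqref{eq:dft_definition} and \eqref{eq:dft_inverse}, and to compare the result with the flat transform \eqref{eq:flat_Fourier} piece by piece. In each piece $e$ is a plane wave, possibly with a coefficient.

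For \eqref{eq:dft_flat_relation}, fix $\xi\neq0$; at $\xi=0$ both sides vanish, using $T(0)=0$, $R(0)=-1$ and $e(x,0)=0$. I would first rewrite $e$ so that the plane wave $e^{ix\xi}$ appears on all of $\R$. On the region $x\xi\ge0$, use $T=1+R$ from \eqref{eq:TR_identities} to write $T(|\xi|)e^{ix\xi}=e^{ix\xi}+R(|\xi|)e^{ix\xi}$. On this region $x\xi=|x||\xi|$, so the correction is $R(|\xi|)e^{i|x||\xi|}$. On the region $x\xi<0$, the correction is $R(|\xi|)e^{-ix\xi}$, and there $-x\xi=|x||\xi|$. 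In both regions,
\[
e(x,\xi)=e^{ix\xi}+R(|\xi|)e^{i|x||\xi|},
\]
with $x=0$ handled consistently by either formula.

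Taking complex conjugates gives $\overline{e(x,\xi)}=e^{-ix\xi}+\overline{R(|\xi|)}e^{-i|x||\xi|}$. Integrating this against $\phi/\sqrt{2\pi}$ yields \eqref{eq:dft_flat_relation}; since $\phi\in L^1$, all the integrals converge absolutely.

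For \eqref{eq:dft_inverse_flat_relation}, I would insert the same unconjugated decomposition into \eqref{eq:dft_inverse}. Integrating against $\psi(\xi)/\sqrt{2\pi}$ over $\xi$ gives $\mathcal F^{-1}\psi(x)+\frac{1}{\sqrt{2\pi}}\int e^{i|x||\xi|}R(|\xi|)\psi(\xi)\,d\xi$. This integral converges absolutely because $|R|\le1$ and $\psi\in L^1$. Here I read $\dft^{-1}\psi$ pointwise via formula \eqref{eq:dft_inverse}, which is legitimate for $\psi\in L^1$ and agrees with the unitary inverse on $L^1\cap L^2$.

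The only step that needs care is checking the sign cases of the decomposition, including $\xi<0$ and the boundary sets $x=0$ and $\xi=0$; these are null sets or are covered by continuity. I therefore expect no real obstacle.
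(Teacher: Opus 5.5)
Your proof is correct: $T=1+R$, together with $x\xi=|x||\xi|$ on $\{x\xi\ge0\}$ and $-x\xi=|x||\xi|$ on $\{x\xi<0\}$, gives $e(x,\xi)=e^{ix\xi}+R(|\xi|)e^{i|x||\xi|}$ for all $x,\xi$ (including $\xi=0$). Substituting this identity and its conjugate into \eqref{eq:dft_definition} and \eqref{eq:dft_inverse} gives both formulas with absolutely convergent integrals; the paper gives no proof of its own and cites \cite{MMS}, and your direct substitution is the standard computation behind that reference.
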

For proof, we refer to \cite{MMS}. The following estimates are the distorted analogues of the standard Fourier estimates.
\begin{lemma}\label{lem:dft_est}
For $t\in\R$,
\begin{equation}\label{eq:dft_est_1}
    \partial_\xi\dft U(t)=e^{-it\xi^2}(\partial_\xi-2it\xi)\dft .
\end{equation}
Moreover,
\begin{equation}\label{eq:dft_est_2}
    \|\partial_\xi\dft\phi\|_{L^2_\xi}\lesssim \|\langle x\rangle\phi\|_{L^2_x},
\end{equation}
\begin{equation}\label{eq:dft_est_3}
    \|\xi\dft\phi\|_{L^2_\xi}^2=\mathfrak h[\phi]
    =\|\partial_x\phi\|_{L^2_x}^2+q|\phi(0)|^2,
\end{equation}
\begin{equation}\label{eq:dft_L1_infty}
    \|\dft \phi\|_{L^\infty_\xi}\lesssim \|\phi\|_{L^1},\quad \|\dft^{-1}\psi\|_{L^\infty_x}\lesssim\|\psi\|_{L^1},
\end{equation}
and 
\begin{equation}\label{eq:dft_est_new}
    \|\partial_x\dft^{-1}\psi\|_{L^2_x}\le \|\xi\psi\|_{L^2_\xi}.
\end{equation}
Finally, if $\psi(0)=0$, then
\begin{equation}\label{eq:dft_inverse_weighted}
    \|x\dft^{-1}\psi\|_{L^2_x}\lesssim \|\psi\|_{H^1_\xi}.
\end{equation}
\end{lemma}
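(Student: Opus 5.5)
The plan is to compute directly with the explicit eigenfunctions in \eqref{eq:distorted_eigenfunction}, using Lemma \ref{lem:relation} to reduce each estimate to its flat Fourier analogue plus a reflected correction. The properties of $R$ that matter are that it is smooth and bounded on $[0,\infty)$, with $|R|\le1$ and $|R'(s)|\lesssim q^{-1}$. Both follow from $R(s)=q/(2is-q)$ and $|2is-q|\ge q$.

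For \eqref{eq:dft_est_1}, I would start from $U(t)=\dft^{-1}e^{-it\xi^2}\dft$, so that $\dft U(t)=e^{-it\xi^2}\dft$. Differentiating in $\xi$ gives $e^{-it\xi^2}(\partial_\xi-2it\xi)\dft$ immediately. For \eqref{eq:dft_L1_infty}, it suffices that $|e(x,\xi)|\le |T|+\max(1,1+|R|)\le 2$ uniformly, which bounds both kernels.

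The Plancherel-type identities come next. For \eqref{eq:dft_est_3}, unitarity together with the spectral representation gives $\|\xi\dft\phi\|^2=\langle H\phi,\phi\rangle=\mathfrak h[\phi]$ for $\phi\in D(H)$, and density in the form domain $H^1$ extends it. For \eqref{eq:dft_est_new}, set $f=\dft^{-1}\psi$. Then $\|\xi\psi\|^2=\mathfrak h[f]\ge\|f'\|^2$ because $q>0$, and the bound follows.

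For the weighted estimates \eqref{eq:dft_est_2} and \eqref{eq:dft_inverse_weighted}, I would split via Lemma \ref{lem:relation}. The flat parts are standard. For the reflected part of $\dft\phi$, write
\[
\int e^{-i|x||\xi|}\phi\,dx=\sqrt{2\pi}\,\big[\mathcal F(\phi\mathbf 1_{x>0})(|\xi|)+\mathcal F(\refl\phi\,\mathbf 1_{x>0})(|\xi|)\big].
\]
Differentiating in $\xi$ then produces either $R'$ times a bounded $L^2$ quantity, or $R$ times $\mathcal F(x\phi\mathbf 1_{x>0})$, both controlled by $\|\langle x\rangle\phi\|_{L^2}$. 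The composition with $|\xi|$ is harmless in $L^2$ because it only doubles the norm, and its derivative is $\operatorname{sgn}\xi$ times the derivative evaluated at $|\xi|$.

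\textbf{Main obstacle.} The delicate step is \eqref{eq:dft_inverse_weighted}. Multiplying $x$ into $\int e^{i|x||\xi|}R(|\xi|)\psi(\xi)\,d\xi$ amounts to integrating by parts in $\xi$ on each half-line. This produces boundary terms at $\xi=0$ of the form $R(0)\psi(0^\pm)$, which vanish because $\psi(0)=0$ ($\psi\in H^1$ is continuous). After that, $|x|$ times the reflected term becomes $\int e^{i|x||\xi|}\partial_\xi(R\psi)\,d\xi$, which in $L^2_x$ is bounded by $\|\partial_\xi(R\psi)\|_{L^2}\lesssim\|\psi\|_{H^1}$ by Plancherel on half-lines. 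I would verify the integration by parts first for Schwartz $\psi$ vanishing at $0$ and then pass to the limit by density.
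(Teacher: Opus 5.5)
Your proposal is correct. For \eqref{eq:dft_est_1}, \eqref{eq:dft_est_3}, \eqref{eq:dft_L1_infty} and \eqref{eq:dft_est_new} you argue as the paper does: diagonalization of $U(t)$, the spectral representation of $\mathfrak h$ applied to $\phi$ and then to $\dft^{-1}\psi$ (dropping $q|\dft^{-1}\psi(0)|^2\geq 0$), and a uniform bound on the kernel $e(x,\xi)$.

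One small correction: your kernel bound should read $|e(x,\xi)|\le\max(|T|,1+|R|)\le 2$. As written, $|T|+\max(1,1+|R|)$ can exceed $2$, though any uniform bound suffices.

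The real difference is in the weighted estimates \eqref{eq:dft_est_2} and \eqref{eq:dft_inverse_weighted}. The paper does not prove these; it takes them from \cite{MMS}. You derive them directly from Lemma \ref{lem:relation}:
\begin{itemize}
\item for $\dft\phi$, half-line Plancherel applied to the reflected part;
\item for $\dft^{-1}\psi$, an integration by parts in $\xi$ on each half-line, whose boundary terms $R(0)\psi(0^\pm)$ vanish exactly because $\psi(0)=0$.
\end{itemize}

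This costs a few lines but makes the lemma self-contained, and it exposes two things the citation hides. First, without $\psi(0)=0$ the boundary term survives and $\dft^{-1}\psi$ acquires a tail $-2i\psi(0)/(\sqrt{2\pi}\,|x|)$, so $x\dft^{-1}\psi\notin L^2$. Second, the constants depend on $q$ only through $\|R\|_{L^\infty}\le 1$ and $\|R'\|_{L^\infty}\le 2/q$. They are therefore uniform for $q$ in compact subsets of $(0,\infty)$, which is the kind of uniformity the paper invokes later in Section \ref{sec:stability}. Your argument is also the same half-line Plancherel device the paper itself uses in the proofs of Lemmas \ref{lem:delta_flow} and \ref{lem:dft_partial_q_est}.
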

\begin{proof}
Identity \eqref{eq:dft_est_1} follows immediately from \eqref{eq:linear_flow_dft}. Estimates \eqref{eq:dft_est_2} and \eqref{eq:dft_inverse_weighted} are proved in \cite{MMS}. Estimate \eqref{eq:dft_L1_infty} follows directly from Lemma \ref{lem:relation} and the boundedness of the reflection coefficient in \eqref{eq:TR_identities}. The identity \eqref{eq:dft_est_3} is the spectral representation of the quadratic form \eqref{eq:delta_form}. Applying the same identity to $\phi=\dft^{-1}\psi$ gives
\[
    \|\partial_x\dft^{-1}\psi\|_{L^2_x}^2+q|\dft^{-1}\psi(0)|^2
    =\|\xi\psi\|_{L^2_\xi}^2,
\]
which implies \eqref{eq:dft_est_new}. 
\end{proof}
Chen-Pusateri \cite{CP2} proved the local smoothing estimate in the distorted Fourier transform set-up, compared to the classical Kato smoothing estimate. We record the version we need:
\begin{lemma}[Distorted local smoothing estimate]\label{lem:dft_local_smoothing} For $\mathfrak{m}(x,\xi):\R_x\times \R_\xi\to \C$ with 
\begin{equation}\label{eq:bounded_q}
    \sup_{x\in \R,\xi\in \R}|\mathfrak{m}(x,\xi)|<\infty,
\end{equation}
and $\phi:\R\to \C$ with $|\phi(\xi)|\lesssim \sqrt{|\xi|}$, we have
\begin{equation*}
    \left\|\int_1^t \int e^{is\xi^2} \phi(\xi)\, \overline{\mathfrak{m}(x,\xi)}F(s,x)dxds\right\|_{L^2_\xi}\lesssim \|F\|_{L^1_xL^2_s([1,t])}.
\end{equation*}
\end{lemma}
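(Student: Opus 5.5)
The plan is to reduce the claim to the classical Kato-type local smoothing estimate via a $TT^*$ argument in the time variable. For fixed $x$, consider the operator
\[
    \mathcal T_x G(\xi):=\int_1^t e^{is\xi^2}\phi(\xi)\,\overline{\mathfrak m(x,\xi)}\,G(s)\,ds ,
\]
so that the quantity to be bounded is $\int \mathcal T_x F(\cdot,x)\,dx$. By Minkowski's inequality in $x$,
\[
    \Big\|\int \mathcal T_x F(\cdot,x)\,dx\Big\|_{L^2_\xi}\le\int\|\mathcal T_xF(\cdot,x)\|_{L^2_\xi}\,dx .
\]
Hence it suffices to prove $\|\mathcal T_x G\|_{L^2_\xi}\lesssim\|G\|_{L^2_s([1,t])}$ uniformly in $x\in\R$.

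Since the factor $\overline{\mathfrak m(x,\xi)}$ is bounded pointwise by \eqref{eq:bounded_q} and does not depend on $s$, it can be pulled out:
\[
    |\mathcal T_xG(\xi)|\le \|\mathfrak m\|_{L^\infty}\,|\phi(\xi)|\,\Big|\int_1^t e^{is\xi^2}G(s)\,ds\Big| .
\]
Using the hypothesis $|\phi(\xi)|\lesssim|\xi|^{1/2}$, we then need
\[
    \int_\R|\xi|\,\Big|\int_\R e^{is\xi^2}G(s)\mathbf 1_{[1,t]}(s)\,ds\Big|^2d\xi\lesssim\|G\|_{L^2_s}^2 .
\]
This follows from the change of variables $\eta=\xi^2$ on each half-line, where $|\xi|\,d\xi=\tfrac12 d\eta$. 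Each half-line contributes
\[
    \tfrac12\int_0^\infty\Big|\int e^{is\eta}G(s)\mathbf 1_{[1,t]}(s)\,ds\Big|^2d\eta\le \pi\|G\|_{L^2([1,t])}^2
\]
by Plancherel in $s$. This is exactly the one-dimensional Kato smoothing mechanism, with the weight $|\xi|$ absorbing the Jacobian.

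The only point needing care is that the bound on $\mathfrak m$ must be uniform in $x$ so that the Minkowski step is legitimate, and this is assumed. One should also justify measurability and interchange of integrals, say first for $F$ in a dense class such as $C_c^\infty$, then extending by density. I do not expect a real obstacle: the potential difficulty of the $x$-dependent oscillation is sidestepped because the estimate is taken in $L^1_x$. The constant obtained is $\sqrt{2\pi}\,\|\mathfrak m\|_{L^\infty}\sup_\xi|\phi(\xi)||\xi|^{-1/2}$, independent of $t$.
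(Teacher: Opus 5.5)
Your proof is correct. The paper does not prove this lemma itself. It quotes the estimate from Chen--Pusateri and only remarks that the distorted eigenfunctions $e(x,\xi)$ are uniformly bounded by \eqref{eq:TR_identities}, so that hypothesis \eqref{eq:bounded_q} applies to them.

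Your argument is a self-contained proof, and it is the standard one:
\begin{itemize}
\item Minkowski's inequality in $x$ reduces matters to a fixed $x$.
\item For fixed $x$, $\overline{\mathfrak m(x,\xi)}$ is just a bounded multiplier independent of $s$, so it factors out of the time integral.
\item The substitution $\eta=\xi^2$ on each half-line absorbs the weight $|\phi(\xi)|^2\lesssim|\xi|$ into the Jacobian, and Plancherel in $s$ finishes the estimate.
\end{itemize}

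What your route buys is transparency. It shows that no oscillation or orthogonality in $x$ is used, only uniform boundedness of $\mathfrak m$. This works precisely because the $x$-norm on the right-hand side is $L^1$. That is why the paper can apply the lemma with kernels such as $e(x,\xi)$ or $e^{-i|x||\xi|}$ without further work.

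One small point: despite your opening sentence, no $TT^*$ argument is actually involved. You apply Plancherel in time directly, which is simpler and suffices.
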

If $\mathfrak{m}(x,\xi)$ takes $e^{ix\xi}$, Lemma \ref{lem:dft_local_smoothing} is the usual Kato smoothing. \eqref{eq:TR_identities} implies that the distorted basis $e(x,\xi)$ satisfies \eqref{eq:bounded_q}, as the flat Fourier basis. 

We also record a few estimates for the free flow and flat Fourier transform. 
\begin{lemma}\label{lem:osc_W11}
Let $m\in W^{1,1}(\R)$. Then, for all $t\geq 0$,
\begin{equation}\label{eq:osc_flat_W11}
    \sup_{y\in\R}\left|\int_\R e^{-it\xi^2+iy\xi}m(\xi)\,d\xi\right|
    \lesssim \langle t\rangle^{-\frac{1}{2}}\|m\|_{W^{1,1}},
\end{equation}
 and
\begin{equation}\label{eq:osc_abs_W11}
    \sup_{y\in\R}\left|\int_\R e^{-it\xi^2+i|y||\xi|}m(\xi)\,d\xi\right|
    \lesssim \langle t\rangle^{-\frac{1}{2}}\|m\|_{W^{1,1}}.
\end{equation}
\end{lemma}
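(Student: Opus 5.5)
The plan is to treat both estimates at once through a van der Corput–type bound for phases $\varphi(\xi)=-t\xi^2+\theta(\xi)$, with $\theta(\xi)=y\xi$ or $\theta(\xi)=|y||\xi|$. For $t\le 1$ the claim is trivial, since $|\int e^{i\varphi}m|\le\|m\|_{L^1}\le\|m\|_{W^{1,1}}$ and $\langle t\rangle\sim 1$. So assume $t\ge1$; the task is to prove the bound $t^{-1/2}\|m\|_{W^{1,1}}$ uniformly in $y$.

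\textbf{Case \eqref{eq:osc_flat_W11}.} Write $m(\xi)=m(-\infty)+\int_{-\infty}^\xi m'(\eta)\,d\eta$. Since $m\in W^{1,1}$, $m$ is absolutely continuous and tends to $0$ at infinity, so $m(\xi)=\int_{-\infty}^\xi m'(\eta)\,d\eta$. By Fubini,
\[
\int_\R e^{-it\xi^2+iy\xi}m(\xi)\,d\xi=\int_\R m'(\eta)\left(\int_\eta^\infty e^{-it\xi^2+iy\xi}\,d\xi\right)d\eta .
\]
This interchange needs justification because the inner integral is only conditionally convergent. I will insert a Gaussian cutoff $e^{-\epsilon\xi^2}$ and let $\epsilon\to0$, or argue on truncated intervals $[-N,N]$. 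The key point is then the uniform estimate
\[
\sup_{\eta\le\eta'}\left|\int_\eta^{\eta'}e^{-it\xi^2+iy\xi}\,d\xi\right|\lesssim t^{-1/2},
\]
uniformly in $y$. Completing the square gives $-t(\xi-y/(2t))^2+\text{const}$, and the substitution $\zeta=\sqrt t(\xi-y/(2t))$ reduces this to the classical uniform boundedness of the partial Fresnel integrals $\int_A^B e^{-i\zeta^2}\,d\zeta$. Equivalently, one can apply van der Corput's lemma with $|\varphi''|=2t$. Combining the two displays yields \eqref{eq:osc_flat_W11}.

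\textbf{Case \eqref{eq:osc_abs_W11}.} Split the integral into $\xi>0$ and $\xi<0$. On $(0,\infty)$ the phase is $-t\xi^2+|y|\xi$. On $(-\infty,0)$, the substitution $\xi\mapsto-\xi$ turns it into an integral over $(0,\infty)$ with the same phase and amplitude $m(-\xi)$.

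For each half-line integral $\int_0^\infty e^{-it\xi^2+i|y|\xi}n(\xi)\,d\xi$, where $n$ is the restriction of $m$ or of $\refl m$ to $(0,\infty)$, write $n(\xi)=-\int_\xi^\infty n'(\eta)\,d\eta$. This gives
\[
\int_0^\infty e^{-it\xi^2+i|y|\xi}n(\xi)\,d\xi=-\int_0^\infty n'(\eta)\left(\int_0^\eta e^{-it\xi^2+i|y|\xi}\,d\xi\right)d\eta .
\]
The same uniform Fresnel bound on arbitrary subintervals applies, so each half contributes $\lesssim t^{-1/2}\|m'\|_{L^1}$.

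No boundary terms arise in this splitting: the primitive is taken from $0$, and the amplitude representation uses only decay at infinity. The possible discontinuity of $|\xi|$ at $0$ is therefore harmless.

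\textbf{Main obstacle.} The only delicate point is the justification of Fubini for conditionally convergent oscillatory integrals. I would handle it by truncating to $[-N,N]$, writing $m(\xi)=m(-N)+\int_{-N}^\xi m'$, and bounding the boundary term by $|m(-N)|\,t^{-1/2}$, which tends to $0$ as $N\to\infty$. Alternatively, one can prove the estimate first for $m\in C_c^\infty$ and extend by density in $W^{1,1}$, using that the left-hand side is continuous in $L^1$ for fixed $t$.
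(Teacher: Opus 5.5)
Your proposal is correct and follows essentially the same route as the paper: the trivial $L^1$ bound for $t\le 1$, the van der Corput estimate with an amplitude of bounded variation for the flat phase (which you prove directly, via the primitive of $m'$, a truncation to justify Fubini, and the uniform bound on partial Fresnel integrals), and a splitting into half-lines for the phase $|y||\xi|$. The only minor difference is that you take the primitive from infinity on each half-line, which avoids the boundary term at $\xi=0$; the paper instead controls that term through $|m(0)|\lesssim\|m\|_{W^{1,1}}$.
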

\begin{proof}
For $0\leq t\leq 1$, both estimates follow from $\|m\|_{L^1}$.  For $t\geq1$, \eqref{eq:osc_flat_W11} is the standard one-dimensional estimate for the Schr\"odinger phase, with amplitude of bounded variation.  To prove \eqref{eq:osc_abs_W11}, split the integral into $\xi>0$ and $\xi<0$ and apply \eqref{eq:osc_flat_W11} to each half-line. The possible jump at the origin is controlled by the $W^{1,1}$ norm.
\end{proof}

\begin{lemma}\label{lem:free_packet_localization}
Let $\varphi\in \mathcal S(\R)$. For every $N\geq0$, there exists $C_{N,\varphi}>0$ such that
\begin{equation}\label{eq:free_packet_pointwise}
    |U_0(t)\varphi(x)|\leq C_{N,\varphi}
    \begin{cases}
        \langle x\rangle^{-N},&0\leq t\leq1,\\[0.3em]
        t^{-\frac{1}{2}}\left\langle \dfrac{x}{2t}\right\rangle^{-N},&t\geq1.
    \end{cases}
\end{equation}
In particular, we have
\begin{equation}\label{eq:maximal_function_bdd}
    |U_0(t)\varphi(x)|\leq C_{\varphi}\, \la x\ra ^{-\frac{1}{2}},\quad \text{for }t\geq 0.
\end{equation}
Consequently,
\begin{equation}\label{eq:free_packet_uniform_integrability}
    \sup_{\nu\geq 10}\sup_{x\in\R}
    \int_0^\infty \left|U_0\left(\frac{s}{2\nu}\right)\varphi(x-s)\right|^4\,ds<\infty,
\end{equation}
 and the same estimate holds with $x-s$ replaced by $x+s$.
\end{lemma}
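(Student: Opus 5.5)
The three claims will be proved in order. The pointwise bound \eqref{eq:free_packet_pointwise} comes from the explicit kernel in the short-time regime and from stationary phase in the long-time regime. The maximal bound \eqref{eq:maximal_function_bdd} is read off from it by comparing $t$ with $|x|$. The uniform integrability \eqref{eq:free_packet_uniform_integrability} then reduces to integrating that maximal bound to the fourth power.

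\emph{The case $0\le t\le 1$.} Write
\[
U_0(t)\varphi(x)=\frac{1}{\sqrt{2\pi}}\int e^{ix\xi-it\xi^2}\hat\varphi(\xi)\,d\xi .
\]
For $|x|\ge 1$, integrate by parts $N$ times using $(ix)^{-1}\partial_\xi e^{ix\xi}=e^{ix\xi}$. Each derivative falls on $e^{-it\xi^2}\hat\varphi(\xi)$ and produces polynomial factors in $t\xi$. Since $|t|\le1$ and $\hat\varphi\in\mathcal S$, the result is $|x|^{-N}$ times a constant depending only on $N$ and $\varphi$. For $|x|\le1$ we simply use $\|\hat\varphi\|_{L^1}$.

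\emph{The case $t\ge1$.} Use the factorization
\[
U_0(t)\varphi=M(t)D(t)\mathcal F M(t)\varphi,
\]
which gives
\[
|U_0(t)\varphi(x)|=(2t)^{-1/2}\bigl|\mathcal F[e^{i y^2/4t}\varphi](x/2t)\bigr|.
\]
It then suffices to show that $\psi_t:=e^{iy^2/4t}\varphi$ satisfies $|\widehat{\psi_t}(\eta)|\le C_N\langle\eta\rangle^{-N}$ uniformly in $t\ge1$. This follows because $\partial_y^k\psi_t$ is bounded in $L^1$ uniformly in $t\ge1$: each derivative of the phase produces factors $y/t$ or $1/t$, and these are absorbed by the rapid decay of $\varphi$. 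Integrating by parts in $y$ then gives the claim, and hence \eqref{eq:free_packet_pointwise}.

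\emph{Maximal bound \eqref{eq:maximal_function_bdd}.} For $t\le1$ the bound is immediate with $N=1$. For $t\ge1$, take $N=1$:
\begin{itemize}
\item if $t\ge|x|$, use $t^{-1/2}\le\langle x\rangle^{-1/2}$, up to a constant;
\item if $t<|x|$, then $t^{-1/2}\langle x/2t\rangle^{-1}\lesssim t^{-1/2}\,t/|x|=t^{1/2}/|x|\le|x|^{-1/2}$.
\end{itemize}

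\emph{Uniform integrability \eqref{eq:free_packet_uniform_integrability}.} This is the step that requires care, because the time argument $s/2\nu$ and the spatial argument $x-s$ are coupled. Two regimes cover the $s$-integral.
\begin{itemize}
\item \emph{Region $s\le 2\nu$, where $t\le1$.} Here the integrand is at most $C\langle x-s\rangle^{-4}$, whose integral in $s$ is bounded uniformly in $x$.
\item \emph{Region $s>2\nu$, where $t=s/2\nu\ge1$.} Here the bound is $C\,\nu^2 s^{-2}\,\langle \nu(x-s)/s\rangle^{-4N}$. Set $\sigma=s/\nu\ge2$, so $ds=\nu\,d\sigma$. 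The integral becomes
\[
\nu\int_2^\infty\sigma^{-2}\langle x/\sigma-\nu\rangle^{-4N}\,d\sigma .
\]
A naive bound on this gives $\nu$ times a constant, which is not uniform. I therefore split on the size of $x/\sigma$. Where $|x/\sigma-\nu|\ge\nu/2$, the weight contributes $\nu^{-4N}$, which more than compensates the factor $\nu$. Where $|x/\sigma-\nu|<\nu/2$, we have $\sigma\sim x/\nu$. Substitute $\rho=x/\sigma$, so that $d\sigma=x\rho^{-2}\,d\rho$ and $\sigma^{-2}\,d\sigma=x^{-1}\,d\rho$. The integral becomes
\[
\frac{\nu}{x}\int\langle\rho-\nu\rangle^{-4N}\,d\rho .
\]
Since $\sigma\ge2$ forces $x\gtrsim\nu$, this is bounded uniformly.
\end{itemize}

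\emph{Remaining case.} When $x-s$ is replaced by $x+s$, the same computation applies with $x$ replaced by $-x$, since the bound depends only on $|x\pm s|$.
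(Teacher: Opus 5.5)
Your proposal is correct and follows essentially the same route as the paper. The ingredients match: integration by parts in $\xi$ for $0\le t\le 1$; the explicit kernel, equivalently $M(t)D(t)\mathcal{F}M(t)$, with integration by parts in $y$ for $t\ge1$; comparing $t$ with $|x|$ for the maximal bound; and for the uniform integrability, the split at $s=2\nu$ followed by the substitution $z=\nu(x-s)/s$, which is your $\rho-\nu$. The only difference is cosmetic: you split the region $s>2\nu$ by whether $|x/\sigma-\nu|\ge\nu/2$, while the paper splits by $x\le\nu$ versus $x>\nu$, and both reduce to $\frac{\nu}{x}\int\langle z\rangle^{-4N}\,dz\lesssim 1$ with $x>\nu$.
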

\begin{proof}
For $0\leq t\leq1$, the first bound follows from the identity
$U_0(t)\varphi=\mathcal F^{-1}(e^{-it\xi^2}\widehat\varphi)$ and repeated integration by parts in $\xi$.  For $t\geq1$, the kernel formula gives
\[
    U_0(t)\varphi(x)=\frac{e^{i\frac{x^2}{4t}}}{\sqrt{4\pi it}}
    \int_\R e^{-\frac{ixy}{2t}}e^{i\frac{y^2}{4t}}\varphi(y)\,dy,
\]
so integration by parts in $y$ gives the second bound in \eqref{eq:free_packet_pointwise}. \eqref{eq:maximal_function_bdd} follows from \eqref{eq:free_packet_pointwise} with $N=\frac{1}{2}$ for small time. Otherwise, we split into $1\leq t\leq |x|$ and $t>|x|$. The latter is clear from $t^{-\frac{1}{2}}$, and we handle the middle time case by
\[
|U_0(t)\varphi(x)|\lesssim t^{-\frac{1}{2}} \left(\frac{2t}{x}\right)^\frac{1}{2}\lesssim \la x\ra ^{-\frac{1}{2}}.
\]

We prove \eqref{eq:free_packet_uniform_integrability} now; the estimate with $x+s$ is identical.  On $s<2\nu$, use the first bound in \eqref{eq:free_packet_pointwise}:
\[
    \int_0^{2\nu}\langle x-s\rangle^{-4N}\,ds\lesssim 1,\quad \text{for }N \text{ sufficiently large.}
\]
On $s\geq2\nu$, use the second bound in \eqref{eq:free_packet_pointwise}
\[
    \int_{2\nu}^\infty \left|U_0\left(\frac{s}{2\nu}\right)\varphi(x-s)\right|^4ds
    \lesssim \int_{2\nu}^\infty \left(\frac{\nu}{s}\right)^2
    \left\langle \frac{\nu(x-s)}{s}\right\rangle^{-4N}ds. 
\]
For $x\leq \nu$, we have $|\frac{\nu(x-s)}{s}|$ is bounded below by $\frac{\nu}{2}$. Thus, we estimate the integral by
\[
\int_{2\nu}^\infty \left(\frac{\nu}{s}\right)^2
    \left\langle \frac{\nu(x-s)}{s}\right\rangle^{-4N}ds\lesssim \nu^{-4N} \int_{2\nu}^\infty \left(\frac{\nu}{s}\right)^2ds \lesssim 1,\quad \text{for }N>\frac{1}{4}.
\]
For $x>\nu$, change of variables $z=\nu(x-s)/s$ rewrites the integral as
\[
\frac{\nu}{x}\int_{-\nu}^{\frac{x}{2}-\nu} \la z\ra^{-4N}dz\lesssim 1,
\]
and proved \eqref{eq:free_packet_uniform_integrability}.
\end{proof}

We also have the analogue of Lemma \ref{lem:dft_local_smoothing} for the delta flow:
\begin{lemma}\label{lem:delta_flow}
For all $t\geq 0$, the delta flow $U(t)$ satisfies
\begin{equation}\label{eq:delta_flow_flat_decay}
\|U(t)\psi\|_{L^\infty_x}
\lesssim \la t\ra^{-\frac{1}{2}}\left(\|\psi\|_{L^1_x}+\|\psi\|_{\Sigma}  \right).
\end{equation}
\end{lemma}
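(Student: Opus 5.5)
We will prove \eqref{eq:delta_flow_flat_decay} using the representation $U(t)=\dft^{-1}e^{-it\xi^2}\dft$ together with Lemma \ref{lem:relation}, which reduces the delta flow to oscillatory integrals of exactly the type controlled by Lemma \ref{lem:osc_W11}. We treat short and long times separately.

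\emph{Short times, $0\le t\le 1$.} We use the $L^2$ theory only. By unitarity of $U(t)$ on $L^2$, the solution satisfies $\|U(t)\psi\|_{L^2}=\|\psi\|_{L^2}$. Since $U(t)$ commutes with $H$, the form is conserved: $\mathfrak h[U(t)\psi]=\mathfrak h[\psi]$. Identity \eqref{eq:dft_est_3} gives $\mathfrak h[\psi]\lesssim\|\psi\|_{H^1}^2$, using the trace bound $|\psi(0)|\lesssim\|\psi\|_{H^1}$. Hence
\[
\|\partial_xU(t)\psi\|_{L^2}\le\mathfrak h[\psi]^{1/2}\lesssim\|\psi\|_{H^1}.
\]
The Sobolev embedding $H^1\subset L^\infty$ then gives $\|U(t)\psi\|_{L^\infty}\lesssim\|\psi\|_\Sigma$, which is the claim because $\langle t\rangle\sim1$ in this range.

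\emph{Long times, $t\ge1$.} Write $m:=\dft\psi$. By \eqref{eq:dft_inverse_flat_relation},
\[
U(t)\psi(x)=\frac1{\sqrt{2\pi}}\int e^{-it\xi^2+ix\xi}m(\xi)\,d\xi+\frac1{\sqrt{2\pi}}\int e^{-it\xi^2+i|x||\xi|}R(|\xi|)m(\xi)\,d\xi .
\]
We apply \eqref{eq:osc_flat_W11} to the first integral and \eqref{eq:osc_abs_W11} to the second. This bounds $|U(t)\psi(x)|$ by
\[
t^{-1/2}\bigl(\|m\|_{W^{1,1}}+\|R(|\cdot|)m\|_{W^{1,1}}\bigr).
\]
Moreover $R$ is bounded and $|\partial_\xi R(|\xi|)|\lesssim\langle\xi\rangle^{-2}$, so the second norm is $\lesssim\|m\|_{W^{1,1}}$. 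It remains to show $\|m\|_{W^{1,1}}\lesssim\|\psi\|_\Sigma$.

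\emph{The $W^{1,1}$ bound on $\dft\psi$ (main step).} We control the two pieces of the $W^{1,1}$ norm by Cauchy--Schwarz with weights.
\begin{itemize}
\item For $\|m\|_{L^1}$, write $\|m\|_{L^1}\le\|\langle\xi\rangle^{-1}\|_{L^2}\|\langle\xi\rangle m\|_{L^2}$. Unitarity and \eqref{eq:dft_est_3} give $\|\langle\xi\rangle m\|_{L^2}\lesssim\|\psi\|_{H^1}$.
\item For $\|\partial_\xi m\|_{L^1}$, the same trick would require control of $\langle\xi\rangle\partial_\xi m$ in $L^2$. This is a mixed weight/derivative quantity that $\Sigma$ does not obviously control, because $\partial_\xi$ and $\xi$ do not combine simply with the reflection term.
\end{itemize}
This derivative term is the expected obstacle. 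The first thing to try is a dyadic splitting. On $|\xi|\le1$, Cauchy--Schwarz and \eqref{eq:dft_est_2} give $\|\partial_\xi m\|_{L^1(|\xi|\le1)}\lesssim\|\langle x\rangle\psi\|_{L^2}$. On $|\xi|\ge1$, we avoid $\partial_\xi m$ altogether and return to the oscillatory integral itself: integrate by parts once using $\partial_\xi e^{-it\xi^2}=-2it\xi e^{-it\xi^2}$, which gains $t^{-1}\xi^{-1}$.

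This integration by parts produces a boundary term at $|\xi|=1$ and an integral of $\partial_\xi(m/\xi)$. We bound that integral in $L^1$ by Cauchy--Schwarz, using \eqref{eq:dft_est_2} and the decay $|\xi|^{-1}\in L^2(|\xi|\ge1)$. This gives a contribution $\lesssim t^{-1}\|\psi\|_\Sigma$, which is better than $t^{-1/2}$.

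The boundary term at $|\xi|=1$ needs a bound on $|m(\pm1)|$. We obtain it from $\|m\|_{L^\infty}\lesssim\|\psi\|_{L^1}$, which is \eqref{eq:dft_L1_infty}. This is precisely where the $L^1$ norm on the right-hand side of \eqref{eq:delta_flow_flat_decay} enters.

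The low-frequency part ($|\xi|\le1$, with a smooth cutoff) has an $H^1_\xi$ amplitude and is therefore in $W^{1,1}$, so Lemma \ref{lem:osc_W11} applies to it directly. Combining the pieces yields $\|U(t)\psi\|_{L^\infty}\lesssim t^{-1/2}(\|\psi\|_{L^1}+\|\psi\|_\Sigma)$ for $t\ge1$, which together with the short-time case proves the lemma.
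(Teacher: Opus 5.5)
Your short-time argument is fine, but the long-time argument has a genuine gap at exactly the step you flagged, and your workaround does not close it. As you note, $\|m\|_{W^{1,1}}$ is not controlled by $\Sigma$: by Lemma \ref{lem:relation}, $m=\dft\psi$ contains $\widehat\psi$, and $\partial_\xi\widehat\psi=-i\widehat{x\psi}$ is only in $L^2$.

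On $|\xi|\ge1$ you integrate by parts against $e^{-it\xi^2}$ alone. But the rest of the integrand is $e^{ix\xi}m(\xi)$ (respectively $e^{i|x||\xi|}R(|\xi|)m(\xi)$), and differentiating $e^{ix\xi}$ produces a term of the form
\[
\frac{x}{2t}\int_{|\xi|\ge1}e^{-it\xi^2+ix\xi}\,\frac{m(\xi)}{\xi}\,d\xi .
\]
You omitted this term, and it is not small when $|x|\gtrsim t$. This is not a technicality. The phase $-t\xi^2+x\xi$ is stationary at $\xi=x/(2t)$, and if $m$ is a bump near $\xi_0=2$, the integral has size $\sim t^{-1/2}$ at $x=4t$. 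So the claimed bound $t^{-1}\|\psi\|_\Sigma$ for the high-frequency piece is false.

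The missing idea is not to send $\widehat\psi$ through Lemma \ref{lem:osc_W11} at all. The paper substitutes
\[
m=\widehat\psi+\tfrac{1}{\sqrt{2\pi}}\overline{R(|\xi|)}\,G,\qquad G(\xi)=\int e^{-i|y||\xi|}\psi(y)\,dy,
\]
into your formula. The $\widehat\psi$ piece of the first integral is exactly $U_0(t)\psi$, which the explicit free kernel bounds by $t^{-1/2}\|\psi\|_{L^1}$. This is where the $L^1$ norm enters, not at your boundary term, where $|m(\pm1)|\lesssim\|m\|_{H^1}$ already suffices.

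Every remaining piece carries a factor $R(|\xi|)$ or $|R(|\xi|)|^2$, which together with its $\xi$-derivative lies in $L^2_\xi$. Since $\|G\|_{H^1_\xi}+\|\widehat\psi\|_{H^1_\xi}\lesssim\|\la x\ra\psi\|_{L^2}$, Cauchy--Schwarz gives the needed $W^{1,1}$ bounds, and Lemma \ref{lem:osc_W11} applies.

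In particular, your reduction $\|R(|\cdot|)m\|_{W^{1,1}}\lesssim\|m\|_{W^{1,1}}$ throws away the decay of $R$. Cauchy--Schwarz gives directly
\[
\|R(|\cdot|)m\|_{W^{1,1}}\lesssim\|R(|\cdot|)\|_{H^1}\|m\|_{H^1}\lesssim_q\|\la x\ra\psi\|_{L^2},
\]
so your second integral never needed a $W^{1,1}$ bound on $m$.

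A minimal repair of your own decomposition also works. Write the first integral as $U_0(t)\mathcal F^{-1}m$ and use the free $L^1\to L^\infty$ dispersive estimate together with
\[
\|\mathcal F^{-1}m\|_{L^1}\lesssim\|\la x\ra\mathcal F^{-1}m\|_{L^2}\lesssim\|m\|_{H^1}\lesssim\|\la x\ra\psi\|_{L^2},
\]
the last step by \eqref{eq:dft_est_2}.
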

\begin{proof}
    Using Lemma \ref{lem:relation}, we decompose
    \begin{align*}
        U(t)\psi(x)&=U_0(t)\psi(x)+\frac{1}{\sqrt{2\pi}}
        \mathcal F^{-1}\left(e^{-it\xi^2}\overline{R(|\xi|)}\int_\R e^{-i|y||\xi|}\psi(y)dy\right)\\
        &+\frac{1}{\sqrt{2\pi}}\int_\R e^{-it\xi^2+i|x||\xi|}R(|\xi|)\hat\psi(\xi)d\xi+\frac{1}{2\pi}\int_\R e^{-it\xi^2+i|x||\xi|} |R(|\xi|)|^2\int_\R e^{-i|y||\xi|}\psi(y)dyd\xi .
    \end{align*}
    The standard dispersive estimate and the $H^1$-unitarity of the free flow, gives
    \[
    \left\|U_0(t)\psi\right\|_{L^\infty_x}\lesssim \la t\ra^{-\frac{1}{2}}\left(\|\psi\|_{L^1}+\|\psi\|_{H^1}\right).
    \]
    Splitting over the two spatial half-lines and applying Plancherel yields 
    \[
    \left\|\int _\R e^{-i|y||\xi|}\psi(y)dy\right\|_{H^1_\xi}\lesssim \|\la x\ra \psi\|_{L^2}.
    \]
    Moreover, as functions of $\xi$, $R(|\xi|)$ and $|R(|\xi|)|^2$ and their derivatives are $L^2$ functions. Thus
    \begin{align*}
        \left\| \overline{R(\xi)}\int _\R e^{-i|y||\xi|}\psi(y)dy\right\|_{W^{1,1}_\xi}+\left\|R(|\xi|)\hat \psi(\xi)\right\|_{W^{1,1}_\xi}+\left\| |R(\xi)|^2\int _\R e^{-i|y||\xi|}\psi(y)dy\right\|_{W^{1,1}_\xi}\lesssim \|\la x\ra \psi\|_{L^2}.
    \end{align*}
    Applying Lemma \ref{lem:osc_W11}, we obtain \eqref{eq:delta_flow_flat_decay}.
\end{proof}


\subsection{The operators $V(t)$ and $V(t)^{-1}$} 
For $t>0$, define $M(t)$ and $D(t)$ by
\begin{equation}\label{eq:MD_definition_sec2}
    (M(t)f)(x)=e^{\frac{ix^2}{4t}}f(x),
    \qquad
    (D(t)f)(x)=(2it)^{-\frac{1}{2}}f\left(\frac{x}{2t}\right).
\end{equation}
The factorization
\begin{equation}\label{eq:def_V_sec2}
    U(t)=M(t)D(t)V(t)\dft
\end{equation}
defines $V(t)$. Equivalently,
\begin{equation}\label{eq:V_inverse_definition_corrected}
    V(t)^{-1}=\dft U(-t)M(t)D(t).
\end{equation}
Since $M(t)$, $D(t)$, $U(t)$, and $\dft$ are unitary on $L^2$, so are $V(t)$ and $V(t)^{-1}$. \cite{MMS} states
\begin{lemma}\label{lem:V_relation}
    We have the following identities
    \begin{equation}
        V(t)\psi(x)=\int \sqrt{\frac{it}{2\pi}} e^{-it(x^2+\xi^2)}e(tx,\xi) \psi(\xi)d\xi,
    \end{equation}
    and 
    \begin{equation}
        V(t)^{-1}\phi(\xi)=\int \overline{\sqrt{\frac{it}{2\pi}}e^{-it(x^2+\xi^2)}e(tx,\xi)}\phi(x)dx. 
    \end{equation}
\end{lemma}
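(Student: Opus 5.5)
The plan is to unwind the defining factorization \eqref{eq:def_V_sec2} and compose the explicit kernels of the four operators involved. The inverse formula will then follow from unitarity.

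Since $M(t)$ and $D(t)$ are invertible, \eqref{eq:def_V_sec2} gives
\[
V(t)=D(t)^{-1}M(t)^{-1}U(t)\dft^{-1}.
\]
First I would compute $U(t)\dft^{-1}\psi$ for $\psi\in\mathcal S(\R)$. By \eqref{eq:linear_flow_dft} and \eqref{eq:dft_inverse},
\[
U(t)\dft^{-1}\psi(y)=\dft^{-1}\big(e^{-it\xi^2}\psi\big)(y)=\frac{1}{\sqrt{2\pi}}\int_\R e(y,\xi)e^{-it\xi^2}\psi(\xi)\,d\xi .
\]
Next I apply $M(t)^{-1}$, which multiplies by $e^{-iy^2/4t}$. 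Then I apply $D(t)^{-1}$, which acts by $(D(t)^{-1}f)(x)=(2it)^{\frac12}f(\lambda_t x)$, where $\lambda_t$ is the spatial dilation fixed by the convention in \eqref{eq:MD_definition_sec2}. The result is a single integral of the form
\[
c_t\,e^{-it(x^2+\xi^2)}\,e(\lambda_t x,\xi)
\]
integrated against $\psi(\xi)$. The normalizing constant $c_t$ comes from $(2it)^{\frac12}(2\pi)^{-\frac12}$. The quadratic phase $e^{-itx^2}$ arises from $e^{-i(\lambda_t x)^2/4t}$, and the phase $e^{-it\xi^2}$ is carried over from the propagator. I will match this expression with the stated kernel
\[
\sqrt{\tfrac{it}{2\pi}}\,e^{-it(x^2+\xi^2)}\,e(tx,\xi),
\]
which is the form recorded in \cite{MMS}. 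The match is made by identifying the dilation and the constant with that convention, and this can be done through the change of variables in $x$ induced by $D(t)$. The formula then extends from $\mathcal S(\R)$ to $L^2$ by density, because every operator in the composition is unitary.

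For $V(t)^{-1}$, I use that $V(t)$ is unitary, so $V(t)^{-1}=V(t)^*$. The kernel of the adjoint is the complex conjugate of the kernel of $V(t)$ with the variables $x$ and $\xi$ interchanged as integration variables. Equivalently, one can compute $\dft U(-t)M(t)D(t)$ directly using \eqref{eq:dft_definition}, which gives the integral against $\overline{e(\cdot,\xi)}$. Both routes give the stated formula for $V(t)^{-1}\phi(\xi)$. On $L^1\cap L^2$ the integrals converge absolutely, because $|e(x,\xi)|\le 2$ by \eqref{eq:TR_identities}, so the formula holds there and extends to $L^2$ by density.

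I expect the main obstacle to be the bookkeeping of the scaling and the constant, that is, making sure that the argument of $e$ and the prefactor agree exactly with the displayed kernel. A naive composition with the literal $D(t)$ of \eqref{eq:MD_definition_sec2} gives the argument $2tx$ and the prefactor $\sqrt{it/\pi}$ instead. The stated kernel is therefore tied to the spatial-variable normalization of \cite{MMS}. To resolve this, I would first redo the composition within that normalization, tracking the factors of $2$ in both $M$ and $D$, and verify the phase $e^{-it(x^2+\xi^2)}$. Only after that would I perform the substitution relating the two conventions, checking that the Jacobian factor accounts for the change from $(2it)^{\frac12}(2\pi)^{-\frac12}$ to $\sqrt{it/2\pi}$. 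If that substitution does not reconcile the two, the identity as worded holds only in the convention of \cite{MMS}.
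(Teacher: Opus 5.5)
Your method is the right one: write $V(t)=D(t)^{-1}M(t)^{-1}U(t)\dft^{-1}$, compose the explicit kernels, and use $V(t)^{-1}=V(t)^*$. The paper gives no argument of its own and only quotes \cite{MMS}. The step that fails is the planned ``matching'' of your kernel with the displayed one through a change of variables in $x$ induced by $D(t)$.

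\textbf{Why the matching step fails.} The formula is an identity at each fixed $x$, and the only integration variable is $\xi$. Substituting in $\xi$ would change $e^{-it\xi^2}$ and $\psi(\xi)$, so it cannot help. Your dilation $\lambda_t$ cannot be adjusted either. With $M(t)^{-1}=e^{-iy^2/4t}$, the quadratic phase $e^{-itx^2}$ forces $y=2tx$, which in turn forces $e(2tx,\xi)$. So no choice of $\lambda_t$ yields both $e^{-itx^2}$ and $e(tx,\xi)$. The two kernels define different operators. On $\{x\xi>0\}$ their moduli are $\sqrt{t/\pi}\,|T(|\xi|)|$ and $\sqrt{t/2\pi}\,|T(|\xi|)|$, and $T(|\xi|)\neq0$ for $\xi\neq0$.

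\textbf{What your computation actually proves.} You should settle your hedge: with this paper's $M(t)$, $D(t)$, $U(t)$ and $\dft$, you have shown
\[
V(t)\psi(x)=\sqrt{\frac{it}{\pi}}\int e^{-it(x^2+\xi^2)}e(2tx,\xi)\psi(\xi)\,d\xi,\qquad V(t)^{-1}\phi(\xi)=\int\overline{\sqrt{\frac{it}{\pi}}\,e^{-it(x^2+\xi^2)}e(2tx,\xi)}\,\phi(x)\,dx .
\]
The lemma as printed is therefore a misprint.

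\textbf{Consistency check with the rest of the paper.} The corrected kernel is the one the later results rely on. Its phases are $-it(x\mp\xi)^2$. Stationary points at $\xi=\pm x$ give $\vec S\cdot\vec\psi$ with constant $1$. The endpoint $\xi=0$ gives exactly $2\Fr(\sqrt{2t}|x|)\psi(0)$, using $T(0)=0$ and $R(0)=-1$, as in Proposition~\ref{prop:asy_V}.

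The printed kernel behaves differently. On $\{x\xi\ge0\}$ its phase is $-it(\xi^2-x\xi+x^2)$, which is stationary at $\xi=x/2$. It would give the leading term
\[
2^{-1/2}e^{-3itx^2/4}\bigl(T(|x|/2)\psi(x/2)+R(|x|/2)\psi(-x/2)\bigr),
\]
which does not converge to $\vec S\cdot\vec\psi$. Your fallback, that the printed identity ``holds in the convention of \cite{MMS},'' does not rescue the lemma. Every downstream statement here uses this paper's $M(t)$, $D(t)$ and the Fresnel argument $\sqrt{2t}|x|$.

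Your adjoint argument for $V(t)^{-1}$ and your density and absolute-convergence remarks are fine once they are applied to the corrected kernel.
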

Set
\begin{equation}\label{eq:S_vector_definition}
    \vec S(\xi):=(S_1(\xi),S_2(\xi)):=(T(|\xi|),R(|\xi|)),
    \qquad
    \overline{\vec S(\xi)}:=(\overline{S_1(\xi)},\overline{S_2(\xi)}).
\end{equation}
We also write $\vec f=(f,\refl f)^T$. We record the asymptotic behavior of $V(t)$ and $V(t)^{-1}$ from \cite{MMS}:
\begin{proposition}\label{prop:asy_V}
For $\phi,\psi\in H^1(\R)$ and $t>0$,
\begin{equation}\label{eq:asy_V}
    \left\|V(t)\psi-\vec S\cdot\vec\psi
    -2\Fr(\sqrt{2t}|\cdot|)\psi(0)\right\|_{L^\infty}
    \lesssim t^{-\frac{1}{4}}\|\psi\|_{H^1},
\end{equation}
and
\begin{equation}\label{eq:asy_V_inverse}
    \left\|V(t)^{-1}\phi-\overline{\vec S}\cdot\vec\phi
    -2\overline{\Fr(\sqrt{2t}|\cdot|)}\phi(0)\right\|_{L^\infty}
    \lesssim t^{-\frac{1}{4}}\|\phi\|_{H^1},
\end{equation}
where
\begin{equation}\label{eq:Fresnel_definition}
    \Fr(y)=\sqrt{\frac{i}{2\pi}}\int_{-\infty}^{-y}e^{-ix^2/2}\,dx .
\end{equation}
\end{proposition}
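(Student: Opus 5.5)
We prove \eqref{eq:asy_V}; \eqref{eq:asy_V_inverse} follows by the same computation with complex conjugated kernels, or by duality from the unitarity of $V(t)$. Start from the kernel representation in Lemma \ref{lem:V_relation} and insert the explicit form \eqref{eq:distorted_eigenfunction} of $e(tx,\xi)$. Fix $x>0$; the case $x<0$ is symmetric under $\refl$. On the region $\xi\geq0$ the eigenfunction is $T(\xi)e^{itx\xi}$. On $\xi<0$ it is $e^{itx\xi}+R(|\xi|)e^{-itx\xi}$. Completing the square, $-t(x^2+\xi^2)\pm tx\xi=-t(\xi\mp x/2)^2-\tfrac{3}{4}tx^2$; the normalization of $M,D$ should absorb this, so I will recheck the precise phase convention from \eqref{eq:def_V_sec2}. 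In any case each piece has the form
\[
\sqrt{\tfrac{it}{2\pi}}\int_{I} e^{-it(\xi-\xi_0)^2}m(\xi)\psi(\xi)\,d\xi,
\]
where $I$ is a half-line $[0,\infty)$ or $(-\infty,0]$, $m\in\{T,1,R\}$, and the stationary point $\xi_0$ equals $\pm x$ up to the normalization.

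The key step is a half-line stationary phase lemma with $H^1$ amplitude. For $f\in H^1$ with $f'\in L^2$ only, write $f(\xi)=f(\xi_0)+(f(\xi)-f(\xi_0))$. The constant part gives $f(\xi_0)$ times a Gaussian integral over a half-line. After the change of variables $y=\sqrt{2t}(\xi-\xi_0)$, that integral equals $\mathbf 1_{\xi_0\in I}-\Fr(\sqrt{2t}|\xi_0|)$ up to sign and reflection conventions. For the remainder, integrate by parts once using $\partial_\xi e^{-it(\xi-\xi_0)^2}=-2it(\xi-\xi_0)e^{-it(\xi-\xi_0)^2}$. Combined with $|f(\xi)-f(\xi_0)|\le|\xi-\xi_0|^{1/2}\|f'\|_{L^2}$, this gives a bound $t^{-1/4}\|f\|_{H^1}$, uniformly in $\xi_0$. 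The boundary term at the endpoint $0$ is handled with the same Hölder bound. Here $f=m\psi$, and $T,R$ are smooth and bounded with bounded derivatives, so $\|f\|_{H^1}\lesssim\|\psi\|_{H^1}$.

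Summing the three pieces, the interior stationary point contributes $T(|x|)\psi(x)+R(|x|)\psi(-x)=\vec S\cdot\vec\psi$. The Fresnel corrections evaluated at $\xi_0=\pm x$ collect into terms of the form $\Fr(\sqrt{2t}|x|)\,[\,\cdots]$. Since $\Fr(\sqrt{2t}|x|)$ is only non-negligible for $|x|\lesssim t^{-1/2}$, we may replace $\psi(\pm x)$ by $\psi(0)$ there, at cost $|x|^{1/2}\|\psi'\|_{L^2}\lesssim t^{-1/4}$. Similarly $T(|x|)$ and $R(|x|)$ may be replaced by $T(0)=0$ and $R(0)=-1$, since $|T(x)-T(0)|\lesssim|x|$. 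To make this localization rigorous we need $|\Fr(y)|\lesssim\langle y\rangle^{-1}$, and the bound $|x|^{1/2}\langle\sqrt t x\rangle^{-1}\lesssim t^{-1/4}$ then closes it. Using $1+R(0)=T(0)=0$, the surviving coefficient of $\Fr(\sqrt{2t}|x|)\psi(0)$ should be exactly $2$, giving the term $2\Fr(\sqrt{2t}|\cdot|)\psi(0)$.

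I expect the main obstacle to be this bookkeeping of boundary Fresnel terms. It requires getting the signs, the conjugations, and the factor $2$ exactly right, and checking continuity across $x=0$. Everything else is the uniform $t^{-1/4}$ stationary phase estimate above.
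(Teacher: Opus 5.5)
The paper gives no proof of this proposition; it imports it from \cite{MMS}. Your direct argument is a valid self-contained route. It splits the kernel into three half-line oscillatory integrals, each with one quadratic phase, proves a uniform half-line stationary-phase bound for $H^1$ amplitudes, and then collects the Fresnel terms. It works for all $t>0$. The reduction to $x>0$ via $e(-y,-\xi)=e(y,\xi)$, i.e.\ $V(t)\refl=\refl V(t)$, is fine. Two technical remarks:
\begin{enumerate}
\item In the remainder bound, integrate by parts only on $|\xi-\xi_0|\ge t^{-1/2}$. The term $g/(\xi-\xi_0)^2$ with $|g|\lesssim|\xi-\xi_0|^{1/2}$ is not integrable at $\xi_0$. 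Bound the inner region directly by $\sqrt t\int_{|\eta|\le t^{-1/2}}|\eta|^{1/2}\,d\eta\lesssim t^{-1/4}$.
\item Drop the duality alternative for \eqref{eq:asy_V_inverse}. An $H^1\to L^\infty$ error bound for $V(t)$ only dualizes to an $L^1\to H^{-1}$ bound for $V(t)^*$. The direct computation with the conjugate kernel does work. It is even easier, because $\overline{T(|\xi|)}$ and $\overline{R(|\xi|)}$ are then constant in the integration variable.
\end{enumerate}

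The two points you left open are exactly where the answer is decided.

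For the normalization, compute $V(t)=D(t)^{-1}M(t)^{-1}U(t)\dft^{-1}$ directly with the $M,D$ of \eqref{eq:MD_definition_sec2}:
\[
V(t)\psi(x)=\sqrt{\tfrac{it}{\pi}}\int_{\R}e^{-it(x^2+\xi^2)}\,e(2tx,\xi)\,\psi(\xi)\,d\xi .
\]
The phases are then exactly $e^{-it(\xi\mp x)^2}$ with no leftover factor, and the prefactor makes the full-line Gaussian integral equal to $1$. The kernel as printed in Lemma \ref{lem:V_relation} is not consistent with these $M,D$, as you suspected.

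For the signs, the half-line Gaussian integral equals $1-\Fr(\sqrt{2t}|\xi_0|)$ when $\xi_0\in I$. When $\xi_0\notin I$ it equals $+\Fr(\sqrt{2t}|\xi_0|)$; this is the case of the piece $e^{2itx\xi}$ on $\xi<0$. Your uniform formula $\mathbf 1_{\xi_0\in I}-\Fr$ would make the Fresnel coefficient $-(1+T)\psi(x)-R\psi(-x)$, which tends to $0$, not $2\psi(0)$. With the correct signs the coefficient is exactly $(1-T(x))\psi(x)-R(x)\psi(-x)=-R(x)\bigl(\psi(x)+\psi(-x)\bigr)$, using $T=1+R$. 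Your localization argument, with $|\Fr(y)|\lesssim\langle y\rangle^{-1}$ and $R(0)=-1$, then turns this into $2\Fr(\sqrt{2t}x)\psi(0)$ up to $\mathcal O(t^{-1/4}\|\psi\|_{H^1})$.
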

The Fresnel term satisfies
\begin{equation}\label{eq:Fresnel_decay}
    |\Fr(y)|\lesssim \langle y\rangle^{-1}\,\,\text{for }y>0,
    \qquad
    \Fr(0)=\frac12 .
\end{equation}
Indeed, for large $|y|$, this follows by writing
\[
    e^{-ix^2/2}=\frac{\partial_x(xe^{-ix^2/2})}{1-ix^2}
\]
and integrating by parts on $(-\infty,-y)$. 
\begin{corollary}\label{cor:estimate_V}
For $\psi\in H^1(\R)$ and $t>0$,
\begin{equation}\label{eq:V_zero_bound}
    |[V(t)\psi](0)|\lesssim t^{-\frac{1}{4}}\|\psi\|_{H^1},
\end{equation}
and
\begin{equation}\label{eq:V_Linfty_bound}
    \|V(t)\psi\|_{L^\infty}\lesssim \|\psi\|_{L^\infty}+t^{-\frac{1}{4}}\|\psi\|_{H^1}.
\end{equation}
\end{corollary}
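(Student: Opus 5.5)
Both bounds are read off directly from Proposition \ref{prop:asy_V}, estimate \eqref{eq:asy_V}, by evaluating the leading terms explicitly and using the triangle inequality; no new oscillatory analysis is needed.

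\emph{Proof of \eqref{eq:V_zero_bound}.} Evaluate the expression inside the norm in \eqref{eq:asy_V} at $x=0$. Because $\vec S(0)=(T(0),R(0))=(0,-1)$ and $\vec\psi(0)=(\psi(0),\psi(0))^T$, we get $\vec S(0)\cdot\vec\psi(0)=-\psi(0)$. Also $\Fr(0)=\tfrac12$ by \eqref{eq:Fresnel_decay}, so the Fresnel term equals $2\cdot\tfrac12\,\psi(0)=\psi(0)$. The two leading terms therefore cancel at the origin. Since $\psi\in H^1$ is continuous and $V(t)\psi$ differs from a continuous function by an $L^\infty$-small error, the pointwise evaluation at $0$ is legitimate: either $V(t)\psi$ is continuous (its kernel in Lemma \ref{lem:V_relation} is continuous in $x$ for nice $\psi$, and we argue by density), or we read the $L^\infty$ bound as a bound on the essential supremum near $0$. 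In either case $|[V(t)\psi](0)|\lesssim t^{-1/4}\|\psi\|_{H^1}$.

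\emph{Proof of \eqref{eq:V_Linfty_bound}.} Apply the triangle inequality in \eqref{eq:asy_V}. By \eqref{eq:TR_identities}, $|T|,|R|\le 1$, so
\[
\|\vec S\cdot\vec\psi\|_{L^\infty}\le 2\|\psi\|_{L^\infty}.
\]
The Fresnel term satisfies $|\Fr|\lesssim 1$ by \eqref{eq:Fresnel_decay}, so it is bounded by $C|\psi(0)|\le C\|\psi\|_{L^\infty}$. Adding the error $t^{-1/4}\|\psi\|_{H^1}$ from \eqref{eq:asy_V} gives the claimed bound.

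The only point requiring care is making the evaluation at $x=0$ rigorous, and density of Schwartz functions together with the uniform bound \eqref{eq:asy_V} suffices for that.
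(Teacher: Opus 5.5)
Your proposal is correct and follows the paper's own argument: in \eqref{eq:asy_V} at the origin, $T(0)\psi(0)+R(0)\psi(0)=-\psi(0)$ cancels against $2\Fr(0)\psi(0)=\psi(0)$. The $L^\infty$ bound then follows from the triangle inequality together with $|T|,|R|\le 1$ and the boundedness of $\Fr$. Your density argument, which gives $V(t)\psi$ a continuous representative so that the pointwise evaluation at $x=0$ makes sense, is a valid addition that the paper leaves implicit.
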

\begin{proof}
The bound \eqref{eq:V_zero_bound} follows from \eqref{eq:asy_V}, $T(0)=0$, $R(0)=-1$, and $\Fr(0)=\frac{1}{2}$, since the two leading terms at the origin cancel. The $L^\infty$ bound follows from \eqref{eq:asy_V}, \eqref{eq:Fresnel_decay}, and the boundedness of $S_1,S_2$.\end{proof}

We also use the following Sobolev estimates from \cite{MMS}.
\begin{proposition}\label{prop:homo_sobolev_est}
For $\phi,\psi\in H^1(\R)$ with $\psi(0)=0$, and for $t>0$,
\begin{equation}\label{eq:V_H1_bound}
    \|V(t)\psi\|_{H^1}\lesssim \|\psi\|_{H^1},
\end{equation}
and
\begin{equation}\label{eq:V_inverse_H1_bound}
    \|V(t)^{-1}\phi\|_{H^1}\lesssim t^{\frac{1}{2}}|\phi(0)|+\|\phi\|_{H^1}.
\end{equation}
\end{proposition}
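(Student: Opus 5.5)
The plan is to convert $\partial_\xi$, respectively $\partial_x$, on the profile side into the vector field $x+2it\partial_x$ on the physical side, as in the free case, and to keep track of the boundary terms the point interaction produces at $x=0$. For \eqref{eq:V_inverse_H1_bound} write
\[
V(t)^{-1}\phi=\dft\, U(-t)\,f_t,\qquad f_t:=M(t)D(t)\phi .
\]
Unitarity of all factors gives $\|V(t)^{-1}\phi\|_{L^2}=\|\phi\|_{L^2}$, so only the derivative needs work. By \eqref{eq:dft_est_1} with $-t$,
\[
\partial_\xi V(t)^{-1}\phi=e^{it\xi^2}\,(\partial_\xi+2it\xi)\,\dft f_t .
\]
The core of the argument is a commutator identity for $\dft$. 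I would compute $\partial_\xi\dft f$ and $\xi\dft f$ directly from \eqref{eq:dft_definition}, splitting the $x$-integral into $x>0$ and $x<0$ and using the explicit eigenfunction \eqref{eq:distorted_eigenfunction}.

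For $\xi\dft f$, integrate by parts piecewise, using $\xi\,\overline{e(x,\xi)}=i\partial_x\overline{e(x,\xi)}$ up to the reflected piece. Because $e$ is continuous at $0$ while $\partial_xe$ jumps by $qe(0,\xi)$, the expected result is
\[
\xi\dft f=\dft(-i\partial_x f)+c\,q\,f(0)\,m(\xi),
\]
with an explicit bounded multiplier $m$ built from $T,R$; more precisely $m$ is of the form $\overline{T(|\xi|)}/\xi$-type or bounded, and this needs checking. For $\partial_\xi\dft f$, differentiation produces $\dft(-ixf)$ together with terms in which $\partial_\xi$ falls on $R(|\xi|)$ and $T(|\xi|)$. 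These are $L^2$-bounded multipliers acting on transforms of $f|_{\R_\pm}$, so their $L^2$ norm is $\lesssim\|f\|_{L^2}$.

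Combining the two computations,
\[
(\partial_\xi+2it\xi)\dft f_t=-i\,\dft\big((x+2it\partial_x)f_t\big)+2itq\,c\,f_t(0)\,m+\bigo_{L^2}(\|f_t\|_{L^2}).
\]
The free identity $(x+2it\partial_x)(M(t)g)=2it\,M(t)\,\partial_xg$ then gives $\|(x+2it\partial_x)f_t\|_{L^2}=2t\|\partial_x D(t)\phi\|_{L^2}=\|\phi'\|_{L^2}$, since $D$ is $L^2$-unitary and contributes a factor $1/(2t)$ under differentiation. The boundary term has size $t\,|f_t(0)|=t\,|2it|^{-1/2}|\phi(0)|\sim t^{1/2}|\phi(0)|$. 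Together these give \eqref{eq:V_inverse_H1_bound}.

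For \eqref{eq:V_H1_bound}, I would rather use the explicit kernel of Lemma \ref{lem:V_relation} together with \eqref{eq:distorted_eigenfunction}. Writing $e(tx,\xi)=e^{itx\xi}+R(|\xi|)e^{it|x||\xi|}$ (the transmitted piece is rewritten via $T=1+R$), one gets
\[
V(t)\psi=V_0(t)\psi+G\big(|x|\big),\qquad G:=V_0(t)\Psi,
\]
where $V_0(t)=e^{-\frac{i}{4t}\partial_x^2}$ is the free analogue and
\[
\Psi(\eta):=\mathbf 1_{\eta>0}\big(R(\eta)\psi(\eta)+R(\eta)\psi(-\eta)\big).
\]
Here $V_0(t)$ is unitary on $H^1$. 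The key observation is that $\Psi\in H^1(\R)$ with $\|\Psi\|_{H^1}\lesssim\|\psi\|_{H^1}$ precisely because $\psi(0)=0$: the cut-off at $\eta=0$ then creates no jump, and $R$, $R'$ are bounded. Hence $G\in H^1$, and $x\mapsto G(|x|)$ lies in $H^1$ with comparable norm, which yields the bound.

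The main obstacle I expect is the exact bookkeeping of the boundary terms at $x=0$ in the commutator identity for $\xi\dft f$ and $\partial_\xi\dft f$. One must verify that the multipliers multiplying $f(0)$ and the $\partial_\xi R,\partial_\xi T$ terms are genuinely $L^2_\xi$-bounded, in particular near $\xi=0$ where $|\xi|$ is non-smooth. I would first check this on the explicit formulas \eqref{eq:TR}, using $T(0)=0$ and \eqref{eq:dft_zero} to absorb the apparent singularity of $m$ at $\xi=0$.
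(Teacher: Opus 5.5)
Your argument for \eqref{eq:V_H1_bound} is correct. It is essentially the kernel argument the paper relies on from \cite{MMS}: writing $e(y,\xi)=e^{iy\xi}+R(|\xi|)e^{i|y||\xi|}$ splits $V(t)$ into $V_0(t)$ plus $[V_0(t)\Psi](|x|)$, and $\psi(0)=0$ is exactly what puts the zero extension $\Psi$ in $H^1$.

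\textbf{The gap is in \eqref{eq:V_inverse_H1_bound}.} Your two separate commutator identities are false, and the discrepancy is neither a boundary term nor $\bigo_{L^2}(\|f\|_{L^2})$.

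By Lemma \ref{lem:relation}, $\dft f(\xi)=\hat f(\xi)+\overline{R(|\xi|)}G_f(|\xi|)$, where $G_f(r):=\frac{1}{\sqrt{2\pi}}\int_\R e^{-i|x|r}f(x)\,dx$. On this reflected channel the phase is $e^{+ix\xi}$ on $\{x<0\}$ when $\xi>0$. So there $\partial_\xi$ produces $+ix$, and multiplication by $\xi$ produces, after integration by parts, $+i\partial_x$. These are the opposite of what $\dft(-ixf)$ and $\dft(-if')$ contain. Concretely, for $\xi>0$ (and symmetrically for $\xi<0$),
\[
\xi\dft f-\dft(-if')=\frac{2i\,\overline{R(\xi)}}{\sqrt{2\pi}}\Big(-f(0)+\int_{-\infty}^0e^{ix\xi}f'(x)\,dx\Big),
\]
\[
\partial_\xi\dft f-\dft(-ixf)=\frac{1}{\sqrt{2\pi}}\Big(\overline{R'(\xi)}\int_\R e^{-i|x|\xi}f(x)\,dx+2i\,\overline{R(\xi)}\int_{-\infty}^0x\,e^{ix\xi}f(x)\,dx\Big).
\]
With $f=f_t=M(t)D(t)\phi$, the last term in each line involves $xf_t$ and $t\partial_xf_t$ on $\R_-$. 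These are of size $t\|y\phi\|_{L^2}$ and are not controlled by $\|\phi\|_{H^1}$. So your displayed identity with remainder $\bigo_{L^2}(\|f_t\|_{L^2})$ does not hold. Estimating $\partial_\xi\dft f_t$ and $2it\xi\dft f_t$ separately cannot close.

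\textbf{The fix.} The missing observation is that the two bad pieces carry the same orientation flip, so they recombine into the vector field $J:=x+2it\partial_x$. Use $G_f'(r)=-iG_{|x|f}(r)$ and $rG_f(r)=-\frac{2i}{\sqrt{2\pi}}f(0)-iG_{\operatorname{sgn}(x)f'}(r)$. Here $G_{g}$ means $G$ applied to the function $g$. One gets
\[
(\partial_\xi+2it\xi)\big[\overline{R(|\xi|)}G_f(|\xi|)\big]=\operatorname{sgn}(\xi)\Big[\overline{R'(|\xi|)}G_f(|\xi|)-i\,\overline{R(|\xi|)}\,G_{\operatorname{sgn}(x)Jf}(|\xi|)+\frac{4t}{\sqrt{2\pi}}\,\overline{R(|\xi|)}\,f(0)\Big].
\]
Combined with $(\partial_\xi+2it\xi)\hat f=-i\widehat{Jf}$, this gives
\[
\|\partial_\xi V(t)^{-1}\phi\|_{L^2}\lesssim\|Jf_t\|_{L^2}+\|f_t\|_{L^2}+t|f_t(0)|\,\|R(|\cdot|)\|_{L^2}\lesssim\|\phi\|_{H^1}+t^{\frac12}|\phi(0)|.
\]
The boundary multiplier must be square integrable, not merely bounded. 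It is $\operatorname{sgn}(\xi)\overline{R(|\xi|)}=\frac{iq}{2}\overline{T(|\xi|)}/\xi$, which is exactly your guessed $\overline{T}/\xi$ multiplier, and it is in $L^2$ since $|R(r)|\lesssim\langle r\rangle^{-1}$.

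\textbf{An alternative route.} The kernel splitting you used for $V(t)$ also handles $V(t)^{-1}$ directly, and is closer to the cited argument. One has $V(t)^{-1}\phi=V_0(t)^{-1}\phi+\overline{R(|\xi|)}\,[V_0(t)^{-1}\Phi](|\xi|)$ with $\Phi:=\mathbf 1_{y>0}(\phi(y)+\phi(-y))$. Now $\Phi$ jumps by $2\phi(0)$ at the origin, so $\partial\Phi$ contains $2\phi(0)\delta_0$. The function $V_0(t)^{-1}\delta_0$ is a chirp of modulus $\simeq t^{1/2}$; multiplied by $\overline{R(|\xi|)}\in L^2$, it gives exactly the $t^{1/2}|\phi(0)|$ term. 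All other terms are bounded by $\|\phi\|_{H^1}$ because $R$ and $R'$ are bounded. This route needs no commutator bookkeeping and treats both estimates uniformly.
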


\section{The direct problem}\label{sec:direct}
In this section, we prove Theorem \ref{thm:modified_scattering}. The proof follows along \cite{MMS}, with some modifications to handle the inhomogeneous cubic term. 

\subsection{The leading homogeneous system} 
Set the distorted profile as 
\begin{equation}\label{eq:def_profile}
    w(t)=\dft U(-t)u(t),\quad \text{equivalently,}\quad u(t)=M(t)D(t)V(t)w(t).
\end{equation}
Then $w$ is a solution to 
\begin{equation}\label{eq:pde_w}
    i\partial_t w=\lambda \dft U(-t)\left(|u|^2u\right)+\lambda \dft U(-t)\left(a|u|^2u\right).
\end{equation}
Equivalently, 
\begin{equation}\label{eq:pde_w_rewrite}
    \begin{split}
        i\partial_t w&= \frac{\lambda}{2t} V(t)^{-1}\left(|V(t)w|^2 V(t)w\right) +\lambda \dft U(-t)\left(a|u|^2u\right).
    \end{split}
\end{equation}
By Proposition \ref{prop:asy_V} and evenness of $S_1$ and $S_2$, 
\begin{equation}\label{eq:V_approximation}
     \left\|Vw-\vec S\cdot \vec w\right\|_{L^\infty}\,+\,\left\|\refl Vw-\vec S\cdot \overrightarrow{\refl w}\right\|_{L^\infty} \,\lesssim \,t^{-\frac{1}{4}}\|w\|_{H^1}. 
\end{equation}
Using Corollary \ref{cor:estimate_V}, Proposition \ref{prop:homo_sobolev_est} together with
\[
    \big||z|^2z-|z'|^2z'\big|\lesssim |z-z'|(|z|^2+|z'|^2),
\]
we get
\begin{equation}\label{eq:homogeneous_expansion_first}
\begin{split}
    V(t)^{-1}(\big|V(t)w\big|^2V(t)w)
    &=\overline{S_1}\big|\vec S\cdot \vec w\big|^2\vec S\cdot \vec w+\overline{S_2}\big | \vec S\cdot \overrightarrow{\refl w}\big|^2\vec S\cdot \overrightarrow{\refl w}+\mathcal E_1(t),\\
    \refl V(t)^{-1}(\big|V(t)w\big|^2V(t)w)
    &=\overline{S_2}\big|\vec S\cdot \vec w\big|^2\vec S\cdot \vec w+\overline{S_1}\big | \vec S\cdot \overrightarrow{\refl w}\big|^2\vec S\cdot \overrightarrow{\refl w}+\mathcal E_2(t),
\end{split}
\end{equation}
where
\begin{equation}\label{eq:E12_bound}
    \|\mathcal E_1(t)\|_{L^\infty}+\|\mathcal E_2(t)\|_{L^\infty}
    \lesssim t^{-\frac{3}{4}}\|w\|_{H^1}^3+t^{-\frac{1}{4}}\|w\|_{H^1}\|w\|_{L^\infty}^2.
\end{equation}
Thus, for $\vec w=(w,\refl w)^T$,
\begin{equation}\label{eq:matrix-pde}
    i\partial_t \vec w
    =\frac{\lambda}{2t} A(\xi,\vec w)\vec w\,+\, \lambda \vec{\mathcal N}_a(t,\xi)+\mathcal{E}(t,\vec w),
\end{equation}
where 
\begin{equation}\label{eq:def_Na}
    \vec{\mathcal N}_a(t,\xi):=\bigl(\mathcal N_a(t,\xi),\mathcal N_a(t,-\xi)\bigr)^T, \qquad \mathcal N_a(t,\xi)=\dft U(-t)\big(a|u|^2 u \big)(\xi),
\end{equation}
and 
\begin{equation}\label{eq:error_E_est}
    \mathcal{E}(t,\vec w)=\bigo\big(t^{-\frac{7}{4}}\|w\|_{H^1} ^3+t^{-\frac{5}{4}}\|w\|_{H^1}\|w\|_{L^\infty}^2\big).
\end{equation}
The matrix $A$ has the rank-one representation 
\begin{equation*}
    \begin{split}
        A(\xi,\vec w)&=\big|\vec S\cdot \vec w\big|^2 \begin{bmatrix}
        \overline{S_1}\\[0.6em]
        \overline{S_2}
    \end{bmatrix}
    \begin{bmatrix}
        S_1,\,\,S_2
    \end{bmatrix}+
    \big|\vec S\cdot \overrightarrow{\refl w}\big|^2\begin{bmatrix}
        \overline{S_2}\\[0.6em]
        \overline{S_1}
    \end{bmatrix}
    \begin{bmatrix}
        S_2,\,\,S_1
    \end{bmatrix}.
    \end{split}
\end{equation*}
In particular $A$ is Hermitian. Moreover, by \eqref{eq:TR_identities}, 
\begin{equation}\label{eq:def_B}
    B=
    \begin{bmatrix}
        S_1 & \overline{S_2} \\[0.4em]
        -S_2 & \overline{S_1}
    \end{bmatrix}
\end{equation}
is unitary and 
\begin{equation}
    B^*AB= 
    \begin{bmatrix}
        \big|\vec S\cdot \vec w\big|^2 &0\\[0.4em]
        0& \big|\vec S\cdot \overrightarrow{\refl w} \big|^2
    \end{bmatrix}.
\end{equation}


\subsection{Global well-posedness.} We consider $u_0\in \Sigma$, $\|u_0\|_\Sigma\leq \varepsilon_0$, with $\varepsilon_0>0$ to be chosen sufficiently small, and let $u$ be the solution to \eqref{main} with initial data $u_0$. Standard local well-posedness in $H^1$, together with the usual continuation criterion, reduces global existence to an a priori $H^1$-bound. We therefore work on the maximal forward lifespan and establish estimates uniform up to its endpoint. Fix $\beta\in(0,\frac{1}{8})$ and impose the bootstrap assumption
\begin{equation}\label{eq:boot}
    \|w(t)\|_{L^\infty}+\la t\ra ^{-\beta} \|w(t)\|_{H^1}\leq 2K\varepsilon_0,\quad 0\leq t\leq T.
\end{equation}
We shall choose a constant $K\geq 1$, depending only on $q,\lambda,\beta$ and the admissible norms of $a$, such that the bootstrap holds for small time. Throughout the bootstrap argument, all implicit constants may depend on $q,\lambda,\beta$ and $a$, but not on $K$, $T$ and $\varepsilon_0$. We first record the immediate large-time consequence of the bootstrap. For $t\geq 1$, Corollary \ref{cor:estimate_V} gives
\begin{equation}\label{eq:u_decay_boot}
    \|u(t)\|_{L^\infty} = \big\|M(t)D(t)V(t)w(t)\big\|_{L^\infty}
    \lesssim t^{-\frac{1}{2}}\left(\|w(t)\|_{L^\infty}+t^{-\frac{1}{4}}\|w(t)\|_{H^1}\right)
    \lesssim K\varepsilon_0 t^{-\frac{1}{2}}.
\end{equation}
Thus the bootstrap implies the dispersive decay for $u$ in \eqref{eq:decay_u_thm} on large-time scale. Consequently, \eqref{eq:dft_L1_infty} gives
\begin{equation}\label{eq:Na_Linfty_bound}
    \|\vec{\mathcal N}_a(t)\|_{L^\infty_\xi}
    \lesssim \|a|u(t)|^3\|_{L^1_x}
    \lesssim_a K^3\varepsilon_0^3 t^{-\frac{3}{2}}.
\end{equation}
For small time, we use conservation laws \eqref{eq:mass} and \eqref{eq:energy}. With the 1D Gagliardo--Nirenberg inequality
\begin{align*}
    \big\|\partial_x u\big\|_{L^2}^2\leq \big| \mathbf{E}(u_0)\big|+C_a\|u(t)\|_{L^4}^4 \lesssim_a \varepsilon_0^2 + \|u(t)\|_{L^2}^3 \big\|\partial_x u\big\|_{L^2}.
\end{align*}
Therefore, with $\varepsilon_0$ sufficiently small, Sobolev embedding gives
\begin{equation}\label{eq:decay_u_small}
    \|u(t)\|_{L^\infty}\lesssim \|u(t)\|_{H^1}\lesssim_a \varepsilon_0,\quad t\geq 0.
\end{equation}
The standard virial computation and $x^2u\big|_{x=0}=0$ give
\begin{align*}
    \partial_t \big\|xu(t)\big\|_{L^2}^2 =4 \Im \int _\R x\overline{u(t,x)}\partial_xu(t,x)dx;
\end{align*}
hence
\begin{equation}\label{eq:xu_bound_small}
    \partial_t \big\|xu(t)\big\|_{L^2}\leq  \big\|\partial_xu(t)\big\|_{L^2}\lesssim_a \varepsilon_0,\qquad \text{and }\,\,\, \big\|xu(t)\big\|_{L^2}\lesssim_a \varepsilon_0, \quad \text{for }0\leq t\leq 1.
\end{equation}
We now transfer these small-time physical-space bounds to the distorted profile. Since
\[
w(t)=\dft U(-t)u=e^{it\xi^2}\dft u,
\]
unitarity of $\dft$ gives the conservation of mass for $w$. Using Lemma \ref{lem:dft_est}, for $0\leq t\leq 1$, 
\begin{equation}\label{eq:small_time_w}
    \begin{split}
        \|w(t)\|_{L^\infty}\lesssim\|w(t)\|_{H^1}&\lesssim \|w(t)\|_{L^2}+\|\partial_\xi w(t)\|_{L^2}\lesssim \varepsilon_0 +\big\|\partial_\xi \dft u(t)\big\|_{L^2}+2t \big\|\xi \dft u(t)\big\|_{L^2}\\
        &\lesssim \varepsilon_0 + \big\|\la x\ra u(t)\big\|_{L^2}+t \|u(t)\|_{H^1}\lesssim_a \varepsilon_0.
    \end{split}
\end{equation}
We let $K$ be the implicit constant in \eqref{eq:small_time_w} and \eqref{eq:decay_u_small}, \textit{i.e.}
\begin{equation}\label{eq:small_time_boot}
    \|w(t)\|_{L^\infty}+\la t\ra ^{-\beta} \|w(t)\|_{H^1}\leq K\varepsilon_0,\qquad \|u(t)\|_{H^1}\lesssim K\varepsilon_0,\quad \text{for }0\leq t\leq 1.
\end{equation}

\medskip

We next prove the large-time improvement on \eqref{eq:boot}. One has
\begin{lemma}\label{lem:close_boot}
    Under bootstrap assumption \eqref{eq:boot}, one has
    \begin{equation}\label{eq:improved_energy_boot}
        \|w(t)\|_{H^1}
    \le \|w(1)\|_{H^1}+C_aK^3\varepsilon_0^3\langle t\rangle^\beta,
    \end{equation}
    and 
    \begin{equation}\label{eq:improved_infty_boot}
        \|w(t)\|_{L^\infty}
    \le \|w(1)\|_{L^\infty}+C_aK^2\varepsilon_0^2.
    \end{equation}
\end{lemma}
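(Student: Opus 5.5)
Both estimates will be proved on $1\le t\le T$ by integrating the profile equation \eqref{eq:pde_w} from time $1$; the range $0\le t\le1$ is already covered by \eqref{eq:small_time_boot}. The homogeneous part will be handled as in \cite{MMS}, and the new work is the inhomogeneous term $\lambda\,\mathcal N_a$.

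\textbf{Step 1: the $H^1$ bound \eqref{eq:improved_energy_boot}.} The $L^2$ norm of $w$ is conserved, so only $\partial_\xi w$ needs control. By \eqref{eq:dft_est_1}, $\partial_\xi w=\dft U(-t)\,\mathcal J u$ with $\mathcal J=x+2it\partial_x$, up to the boundary terms produced by the delta interaction. Following \cite{MMS}, I would use the factorization $\mathcal J=M(t)(2it\partial_x)M(-t)$. This lets me estimate $\|\partial_\xi w(t)\|_{L^2}$ through $t\|\partial_x(M(-t)u)\|_{L^2}$ plus a contribution from the value at the origin. That origin contribution is controlled by \eqref{eq:V_zero_bound} and \eqref{eq:V_inverse_H1_bound}.

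For the homogeneous cubic term, differentiating gives
\[
\partial_t\|w\|_{H^1}\lesssim t^{-1}\|w\|_{L^\infty}^2\|w\|_{H^1}+(\text{errors}),
\]
which contributes $\lesssim K^2\varepsilon_0^2\,t^{-1}\cdot K\varepsilon_0\langle t\rangle^\beta$. Integrating yields $K^3\varepsilon_0^3\langle t\rangle^\beta$; with $\varepsilon_0$ small one can absorb, or simply note $\int_1^t s^{-1+\beta}\,ds\lesssim\beta^{-1}t^\beta$.

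The inhomogeneous term is $\mathcal J(a|u|^2u)=xa|u|^2u+2it\,\partial_x(a|u|^2u)$. The first piece is bounded by $\|xa\|_{L^2}\|u\|_{L^\infty}^3\lesssim K^3\varepsilon_0^3t^{-3/2}$, which is integrable. The second piece is the main obstacle, because the factor $t$ times $\|u\|_{L^\infty}^2\|\partial_x u\|_{L^2}$ is only $\sim t^{0+\beta}$, and this is not integrable. I would expand $\partial_x(a|u|^2u)=a'|u|^2u+a\,\partial_x(|u|^2u)$:
\begin{itemize}
\item The $a'$ part is bounded in $L^2$ by $\|a'\|_{L^1}\|u\|_{L^\infty}^2\|u\|_{L^\infty}\cdot$(local $L^2$). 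I would instead pair it using $\|a'\|_{L^1}$ together with \eqref{eq:dft_L1_infty}.
\item For $a\,\partial_x(|u|^2u)$, I would write $\partial_x u=(2it)^{-1}(\mathcal J u-xu)$. The $\mathcal J u$ term is then controlled by $\|\partial_\xi w\|_{L^2}$, and the $xu$ term is harmless because $a$ is localized with $xa\in L^2$. This trades the bad factor $t$ for $t^{-1}$, giving integrand $\lesssim \|u\|_{L^\infty}^2(\|w\|_{H^1}+\|xa\|)\lesssim K^2\varepsilon_0^2 t^{-1}K\varepsilon_0 t^\beta$, which again integrates to $K^3\varepsilon_0^3t^\beta$.
\end{itemize}
If the nonsmooth boundary contributions at $x=0$ are troublesome, I would fall back on the local smoothing Lemma \ref{lem:dft_local_smoothing} to treat the localized term in $L^1_xL^2_s$.

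\textbf{Step 2: the $L^\infty$ bound \eqref{eq:improved_infty_boot}.} I would use the vector system \eqref{eq:matrix-pde} together with the unitary matrix $B$ from \eqref{eq:def_B}. Setting $\vec z=B^*\vec w$, the leading term becomes diagonal and real, and the phase correction $\exp\big(\tfrac{i\lambda}{2}\int \operatorname{diag}(\cdot)\,ds/s\big)$ removes it. Since $|\vec z|=|\vec w|$ pointwise, we get
\[
\partial_t|\vec z|\le |\lambda\vec{\mathcal N}_a|+|\mathcal E|.
\]
By \eqref{eq:Na_Linfty_bound}, $|\vec{\mathcal N}_a|\lesssim K^3\varepsilon_0^3t^{-3/2}$. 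By \eqref{eq:error_E_est} and the bootstrap, $|\mathcal E|\lesssim K^3\varepsilon_0^3(t^{-7/4+3\beta}+t^{-5/4+\beta})$. Both are integrable since $\beta<1/8$. Integrating from $1$ gives a bound of $C_aK^3\varepsilon_0^3\le C_aK^2\varepsilon_0^2$ for $\varepsilon_0\le1$. Finally, $\|w(t)\|_{L^\infty}\le|\vec w(t)|\le|\vec w(1)|+\dots$, where I bound the vector norm by $\sqrt2\|w\|_{L^\infty}$ and adjust constants.
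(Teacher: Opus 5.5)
\textbf{There is a genuine gap, in the inhomogeneous forcing of Step 1.}

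Most of your plan is sound. Step 2 is the paper's argument: you diagonalize with $B$, while the paper uses only that $A$ is Hermitian, which is the same thing. The homogeneous part of Step 1 gives the same bound the paper gets from Proposition \ref{prop:homo_sobolev_est} and Corollary \ref{cor:estimate_V}. Your treatment of $a\,\partial_x(|u|^2u)$ via $\mathcal J u$ also works pointwise in time. It is the identity \eqref{eq:partial_x_u}, and $\|u(s)\|_{L^\infty}^2\|\partial_x(V(s)w(s))\|_{L^2}\lesssim K^3\varepsilon_0^3 s^{-1+\beta}$ integrates to $t^\beta$.

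The gap is the remaining part of the bad term $\int_1^t e^{is\xi^2}\,2is\xi\,\dft(a|u|^2u)\,ds$ in which the factor $s\xi$ does not become a derivative on $u$, namely $2s\,a'|u|^2u$. Your proposed treatment fails on three counts:
\begin{itemize}
\item $\|a'\|_{L^1}$ cannot control $\|a'|u|^2u\|_{L^2_x}$. This norm need not even be finite, since only $a'\in L^1$ is assumed.
\item \eqref{eq:dft_L1_infty} gives only an $L^\infty_\xi$ bound, which says nothing about the $H^1_\xi$ norm you are estimating.
\item Most fundamentally, any bound taken pointwise in $s$ and then integrated in time is too weak. Even if $a'\in L^2$, it gives
\[
\int_1^t s\,\|a'\|_{L^2}\|u(s)\|_{L^\infty}^3\,ds\lesssim K^3\varepsilon_0^3\int_1^t s^{-\frac12}\,ds\sim K^3\varepsilon_0^3\,t^{\frac12}.
\]
Growth like $t^{1/2}$ is incompatible with the $\la t\ra^{\beta}$, $\beta<\frac18$, required in \eqref{eq:boot}, so the bootstrap does not close.
\end{itemize}

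The missing idea is to use the distorted local smoothing estimate, Lemma \ref{lem:dft_local_smoothing}, as the main tool for this term, not as a fallback for boundary contributions. This is what Lemma \ref{lem:H1_forcing_a} does:
\begin{itemize}
\item Split with a cutoff $\chi(\xi)$. At low frequency $|\xi\chi(\xi)|\le|\xi|^{1/2}$, so local smoothing applies directly.
\item At high frequency, use Lemma \ref{lem:relation}. Integrate by parts in the flat piece to produce $2s\,\mathcal F\partial_x(a|u|^2u)$, and handle the reflected tail using $|\xi R(|\xi|)|\le q/2$.
\end{itemize}
The terms with no derivative on $u$ are then bounded by
\[
\bigl(\|a\|_{L^1}+\|a'\|_{L^1}\bigr)\,\big\|s\,\|u(s)\|_{L^\infty}^3\big\|_{L^2_s([1,t])}\lesssim K^3\varepsilon_0^3(\log t)^{\frac12}.
\]
The $L^1_x$ norm matches exactly the hypothesis $a'\in L^1$. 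The $L^2_s$ norm recovers the square root in time that your $L^1_s$ (Minkowski) bound loses.

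A minor point: the factor $\sqrt2$ from $|\vec w(1,\xi)|\le\sqrt2\|w(1)\|_{L^\infty}$ cannot be absorbed into $C_aK^2\varepsilon_0^2$ by adjusting constants. It is harmless for closing \eqref{eq:boot}, since $\sqrt2<2$, and the paper's ``supremum and square root'' step incurs the same factor.
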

Lemma \ref{lem:close_boot} closes the bootstrap \eqref{eq:boot} with small $\varepsilon_0$. Using \eqref{eq:u_decay_boot} and \eqref{eq:decay_u_small} we have
\begin{proposition}\label{prop:quant_smallness}
    Fix $\beta\in(0,\frac{1}{8})$. Let $\|u_0\|_\Sigma\leq \varepsilon_0$ and let $u$ denote the solution to \eqref{main} with initial data $u_0$. For sufficiently small $\varepsilon_0$, the profile $w$ as in \eqref{eq:def_profile} satisfies
    \begin{equation}
        \|w(t)\|_{L^\infty}+\la t\ra^{-\beta}\|w(t)\|_{H^1}\lesssim_a \varepsilon_0, \quad \text{for any }t\geq 0.
    \end{equation}
    Consequently, 
    \begin{equation}\label{eq:decay_u}
        \|u(t)\|_{L^\infty}\lesssim_a \la t\ra^{-\frac{1}{2}}\varepsilon_0, \quad \text{for any }t\geq 0.
    \end{equation}
\end{proposition}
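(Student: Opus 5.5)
The proposition is a continuity (bootstrap) argument built on Lemma \ref{lem:close_boot} and the small-time bounds \eqref{eq:small_time_boot}. Let $T^*$ be the maximal forward lifespan. Define $T_0$ as the supremum of times $T<T^*$ for which \eqref{eq:boot} holds on $[0,T]$. By \eqref{eq:small_time_boot}, the stronger bound with constant $K$ (rather than $2K$) holds on $[0,1]$, so $T_0\geq 1$.

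The map $t\mapsto \|w(t)\|_{L^\infty}+\la t\ra^{-\beta}\|w(t)\|_{H^1}$ is continuous on $[0,T^*)$. This follows from $w(t)=e^{it\xi^2}\dft u(t)$, from $u\in C([0,T^*);H^1)$, and from the local persistence of the weight $xu\in L^2$ via the virial identity, together with Lemma \ref{lem:dft_est} and the embedding $H^1\subset L^\infty$.

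On $[1,T_0)$, Lemma \ref{lem:close_boot} combined with \eqref{eq:small_time_boot} at $t=1$ gives
\[
\|w(t)\|_{L^\infty}+\la t\ra^{-\beta}\|w(t)\|_{H^1}\leq K\varepsilon_0+C_aK^2\varepsilon_0^2+C_aK^3\varepsilon_0^3 .
\]
Here we used $\la t\ra^{-\beta}\|w(1)\|_{H^1}\leq \la 1\ra^{-\beta}\|w(1)\|_{H^1}$, which is at most the small-time bound up to a harmless constant; if necessary, enlarge $K$ to absorb $2^{\beta/2}$. Choose $\varepsilon_0$ so small that $C_aK\varepsilon_0+C_aK^2\varepsilon_0^2\leq \tfrac12$. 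Then the left side is at most $\tfrac32K\varepsilon_0<2K\varepsilon_0$. By continuity, \eqref{eq:boot} therefore persists slightly beyond $T_0$ whenever $T_0<T^*$, so $T_0=T^*$.

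It remains to rule out $T^*<\infty$. On $[0,T^*)$ the bootstrap gives $\|w(t)\|_{H^1}\lesssim \la T^*\ra^\beta\varepsilon_0$. Alternatively, and more directly, \eqref{eq:decay_u_small}, which comes from the conservation laws, already gives a uniform $H^1$ bound on $u$. The $H^1$ continuation criterion then yields $T^*=\infty$. Hence the first estimate holds for all $t\geq0$ with implicit constant $2K$, which depends only on $q,\lambda,\beta,a$.

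For \eqref{eq:decay_u}, consider two ranges of time.
\begin{itemize}
\item For $t\geq1$, apply \eqref{eq:u_decay_boot}: it uses Corollary \ref{cor:estimate_V} and the unitarity of $M,D$ in $L^\infty$ up to the factor $(2t)^{-1/2}$, together with $t^{-1/4}\|w\|_{H^1}\lesssim t^{\beta-1/4}\varepsilon_0\lesssim\varepsilon_0$, which holds since $\beta<\tfrac18$.
\item For $0\leq t\leq1$, use \eqref{eq:decay_u_small} and $\la t\ra\sim1$.
\end{itemize}

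The only delicate point is the continuity of the bootstrap quantity and the closing arithmetic. Everything substantive is contained in Lemma \ref{lem:close_boot}, which is taken as given. I expect no real obstacle beyond checking that $\varepsilon_0$ can be chosen independently of $T$, which holds because the constants in Lemma \ref{lem:close_boot} do not depend on $T$.
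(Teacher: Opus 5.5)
Your proposal is correct and follows the paper's argument: you close the bootstrap \eqref{eq:boot} with Lemma \ref{lem:close_boot} and the small-time bound \eqref{eq:small_time_boot}, obtain global existence from the conservation-law $H^1$ bound \eqref{eq:decay_u_small}, and read off the decay from \eqref{eq:u_decay_boot} for $t\geq 1$ and from \eqref{eq:decay_u_small} for $t\leq 1$. You also supply the continuity-argument details that the paper leaves implicit; the enlargement of $K$ is unnecessary, since $\la t\ra^{-\beta}\leq \la 1\ra^{-\beta}$ for $t\geq 1$ already gives the bound with constant exactly $K$.
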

The $H^1$-bound prevents finite-time breakdown, and the local theory therefore extends the solution globally.

\medskip

Before proving Lemma \ref{lem:close_boot}, we first need to prove an $H^1$-estimate for the localized forcing.
\begin{lemma}\label{lem:H1_forcing_a}
    Under bootstrap assumption \eqref{eq:boot}, one has
    \begin{equation}\label{eq:H1_forcing_a}
        \left\| \int_1^t \mathcal{N}_a(s)ds\right\|_{H^1_\xi}\lesssim K^3\varepsilon_0^3 t^\beta.
    \end{equation}
\end{lemma}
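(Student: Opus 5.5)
We need to bound $\|\int_1^t \mathcal N_a(s)\,ds\|_{L^2_\xi}$ and $\|\partial_\xi\int_1^t \mathcal N_a(s)\,ds\|_{L^2_\xi}$. Write $F(s):=a|u(s)|^2u(s)$, so that $\mathcal N_a(s)=e^{is\xi^2}\dft F(s)$.

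\textbf{$L^2$ part.} By unitarity of $\dft$,
\[
\|\mathcal N_a(s)\|_{L^2}=\|F(s)\|_{L^2}\le \|a\|_{L^2}\|u(s)\|_{L^\infty}^3\lesssim K^3\varepsilon_0^3 s^{-3/2},
\]
using \eqref{eq:u_decay_boot}. Integrating over $s\in[1,t]$ gives a bound $\lesssim K^3\varepsilon_0^3$, which is better than required.

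\textbf{Derivative part.} By \eqref{eq:dft_est_1},
\[
\partial_\xi \mathcal N_a(s)=e^{is\xi^2}\big(\partial_\xi+2is\xi\big)\dft F(s).
\]
For the first term, \eqref{eq:dft_est_2} gives
\[
\|\partial_\xi\dft F(s)\|_{L^2}\lesssim \|\langle x\rangle a\|_{L^2}\|u(s)\|_{L^\infty}^3\lesssim K^3\varepsilon_0^3 s^{-3/2},
\]
since $xa\in L^2$. This is again integrable.

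The dangerous term is $\int_1^t 2is\,\xi e^{is\xi^2}\dft F(s)\,ds$, which carries the factor $s$. The naive bound
\[
s\,\|\xi\dft F\|_{L^2}\lesssim s\big(\|\partial_x F\|_{L^2}+|F(0)|\big)
\]
costs $s\cdot s^{-1}\cdot s^{\beta}$ and is not integrable. This is the main obstacle.

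\textbf{Plan A: integrate by parts in time.} Use $2is\,\xi e^{is\xi^2}=\xi^{-1}s\,\partial_s e^{is\xi^2}$ to trade the factor $s$ for a time derivative falling on $F$. However, the factor $1/\xi$ near $\xi=0$ is problematic. (The vanishing $\dft F(0)=0$ only gives first-order vanishing, which exactly cancels $\xi^{-1}$.) Moreover, $\partial_s F$ involves $\partial_s u=-iHu+\dots$, which produces second derivatives. So I would prefer a smoothing approach.

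\textbf{Plan B: distorted local smoothing.} Write
\[
\xi\,\dft F=\frac{1}{\sqrt{2\pi}}\int \xi\,\overline{e(x,\xi)}F\,dx.
\]
Away from the origin, $\xi\overline{e(x,\xi)}=i\partial_x\overline{e(x,\xi)}$ up to the reflection sign. Integrating by parts in $x$ then puts the derivative on $F$; the jump condition at $x=0$ contributes a boundary term $q\,\overline{e(0,\xi)}F(0)$, which is bounded. Hence
\[
2is\,\xi\dft F(s)=\int \overline{\mathfrak m(x,\xi)}\,G(s,x)\,dx,
\]
where $\mathfrak m$ is bounded and $G\sim s\,\partial_x F+s\,F(0)\delta$. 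Rather than accepting $\partial_xF$ (which loses $s^\beta$), I would use Lemma~\ref{lem:dft_local_smoothing} with the weight $\phi(\xi)\sim\sqrt{|\xi|}$ to absorb half a derivative. Concretely, split $\xi=\sqrt{|\xi|}\cdot\operatorname{sgn}(\xi)\sqrt{|\xi|}$, apply local smoothing with $\phi=\sqrt{|\xi|}$, and reduce to bounding $\|G\|_{L^1_xL^2_s}$ with only half a derivative on $F$. The spatial localization of $a$ makes the $L^1_x$ norm harmless. In the time direction, $\|s\,|u|^2\partial_x u\|_{L^2_s}$ is controlled pointwise in $x$ by $s\cdot s^{-1}\|\partial_x u\|_{L^\infty_x}$-type quantities.

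For the derivative itself, I would rewrite
\[
\partial_x u=M D\Big(\tfrac{ix}{2t}V w+\tfrac{1}{2t}\partial_x Vw\Big),
\]
so that $2s\,\partial_x u\approx ix\,u+\dots$. Then the $x$-weight is absorbed by $a$ (via $xa\in L^2$ and $a'\in L^1$ for the $\partial_x$ falling on $a$), while the remaining $\partial_x V w$ piece is bounded by $s^{-1/2}\|w\|_{H^1}\lesssim s^{-1/2+\beta}$ through Proposition~\ref{prop:homo_sobolev_est}. The $L^2_s([1,t])$ norm of the resulting expressions is $\lesssim K^3\varepsilon_0^3t^\beta$. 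The point term $F(0)$ is handled using Corollary~\ref{cor:estimate_V}: $|u(s,0)|\lesssim s^{-3/4}$, so $s|F(0)|\lesssim s^{-5/4}$, which is integrable.

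Summing the contributions gives \eqref{eq:H1_forcing_a}. The hardest step is the $2is\xi$ term. I would first try the identity $2s\,\partial_x u=ix\,u+O(\cdot)$ combined with localization from $a$, falling back on local smoothing only for the residual.
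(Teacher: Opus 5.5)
\textbf{There is a genuine gap in how you apply local smoothing to the dangerous term.} Your $L^2$ bound and your treatment of the terms where $\partial_\xi$ misses $e^{is\xi^2}$ are fine. You also have the right ingredients for the dangerous term: local smoothing, the identity $2s\partial_xu=ixu+M(s)D(s)\partial_x(V(s)w)$, $xa\in L^2$ and $a'\in L^1$. The problem is how Lemma~\ref{lem:dft_local_smoothing} is supposed to act on $\int_1^t 2is\xi e^{is\xi^2}\dft F\,ds$.

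That lemma needs a weight with $|\phi(\xi)|\lesssim\sqrt{|\xi|}$, and an input $G(s,x)$, paired against a bounded kernel, that you can control in $L^1_xL^2_s$. The bare factor $\xi$ violates the weight condition at high frequencies. After your integration by parts in $x$ the weight is $1$, which violates it near $\xi=0$.

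Your remedy is to take $\phi=\sqrt{|\xi|}$ and put the remaining $\operatorname{sgn}(\xi)\sqrt{|\xi|}$ on $F$ as a half derivative. This does not close. A half derivative of $a|u|^2u$ is nonlocal and no longer supported where $a$ lives, and there is no Leibniz rule in $L^1_x$, so you cannot bound it in $L^1_xL^2_s$. Your subsequent computation in fact switches back to a full $\partial_xu$.

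The fallback, estimating $\int_1^t 2s\|\xi\dft F(s)\|_{L^2}\,ds$ directly, fails on the term $a'|u|^2u$ in $\partial_xF$. That term is not in $L^2$ when only $a'\in L^1$. Even for $a'\in L^2$ it gives $\int_1^t s\cdot s^{-3/2}\,ds\sim t^{1/2}$, not $t^\beta$. What this term needs is the square-function gain in time from local smoothing, $\bigl(\int_1^t s^2\|u\|_{L^\infty}^6\,ds\bigr)^{1/2}\lesssim(\log t)^{1/2}$.

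\textbf{The missing idea is a frequency split.} Take a cutoff $\chi$ supported in $|\xi|\le 1$.

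At low frequency, $|\xi\chi(\xi)|\le\sqrt{|\xi|}$. So Lemma~\ref{lem:dft_local_smoothing} applies with no derivative on $F$ and gives $\|a\|_{L^1}\bigl(\int_1^t s^2\|u\|_{L^\infty}^6\,ds\bigr)^{1/2}$.

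At high frequency, $1-\chi\lesssim\sqrt{|\xi|}$ is an admissible weight, and only there do you trade $\xi$ for $\partial_x$. The paper does this through Lemma~\ref{lem:relation}. The reflected piece carries $\xi\overline{R(|\xi|)}$, which is bounded by $q/2$, so it needs no derivative and is estimated exactly like the low-frequency piece. The flat piece uses $i\xi\widehat F=\widehat{\partial_xF}$, with no boundary term. Your integration by parts against the sign-flipped kernel would also work here. Its boundary term is a bounded multiple of $\overline{R(|\xi|)}F(0)$, coming from the jump of that kernel at $x=0$, not $q\overline{e(0,\xi)}F(0)$.

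After that, $a'|u|^2u$ is handled like the low-frequency term using $a'\in L^1$. For $a|u|^2\partial_xu$, the identity splits it into two pieces:
\begin{itemize}
\item an $ixu$ piece, controlled by $\|xa\|_{L^2}$;
\item an $M(s)D(s)\partial_x(V(s)w)$ piece, controlled by Cauchy--Schwarz in $x$ with $\|a\|_{L^2}$, together with $\|u\|_{L^4_sL^\infty_x}^2\lesssim1$ and $\|\partial_x(Vw)\|_{L^2}\lesssim\|w\|_{H^1}\lesssim s^\beta$.
\end{itemize}
Since $M(s)D(s)$ is unitary on $L^2$, there is no extra $s^{-1/2}$ on the second piece. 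Moreover, $\|\partial_x(Vw)\|_{L^\infty}$ is not controlled by $\|w\|_{H^1}$.

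Finally, under \eqref{eq:boot} one only has $|u(s,0)|\lesssim s^{-3/4+\beta}$, not $s^{-3/4}$. The boundary contribution $\int_1^t s^{-5/4+3\beta}\,ds$ is still $\lesssim t^\beta$ because $\beta<\frac18$.
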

\begin{proof}
    Rewrite
    \[
    \int_1^t \dft U(-s)a|u(s)|^2 u(s)ds=\int_1^t e^{is\xi^2} \dft a|u(s)|^2 u(s)ds.
    \]
    The worst term arises when $\partial_\xi$ hits $e^{is\xi^2}$ since an extra $s$ is created. Otherwise, we bound by
    \begin{equation}
        \begin{split}
            \int_1^t \left\| \dft a|u(s)|^2 u(s)ds\right\|_{H^1_\xi}ds\lesssim \int_1^t \left\|\la x\ra a |u(s)|^2 u(s)\right\|_{L^2_x}ds\lesssim_a \int_1^t \|u(s)\|_{L^\infty}^3 ds\lesssim K^3 \varepsilon_0^3.
        \end{split}
    \end{equation}
    The bad term takes the form
    \begin{align*}
        \int_1^t e^{is\xi^2}2is\xi \dft a|u(s)|^2u(s)ds=\int_1^t e^{is\xi^2}2is\xi\, \chi(\xi) \dft a|u(s)|^2u(s)ds+\int_1^t e^{is\xi^2}2is\xi \left(1-\chi(\xi)\right) \dft a|u(s)|^2u(s)ds,
    \end{align*}
    where $\chi$ is a smooth cut-off with support $|\xi|\leq 1$. This breaks the integral into a low-frequency piece and a high-frequency piece. For low frequencies, since $\left|\xi\chi(\xi)\right|\leq \sqrt{|\xi|} $, Lemma \ref{lem:dft_local_smoothing} bounds
    \begin{equation}\label{eq:Na_low}
    \begin{split}
         \left\| \int_1^t e^{is\xi^2}2is\xi\, \chi(\xi) \dft a|u(s)|^2u(s)ds\right\|_{L^2_\xi}&\lesssim \big\|sa|u(s)|^2 u(s)\big\|_{L^1_xL^2_s([1,t])}\\
         \lesssim \|a\|_{L^1}&\left(\int_1^t s^2 \|u(s)\|_{L^\infty}^6 ds\right)^\frac{1}{2}\lesssim_a K^3\varepsilon_0^3 \log(t).
    \end{split}
    \end{equation}
    For the high-frequency piece, we first apply Lemma \ref{lem:relation} to it.
    \begin{align}
        \int_1^t e^{is\xi^2}2is\xi &\left(1-\chi(\xi)\right) \dft a|u(s)|^2u(s)ds= \int_1^t e^{is\xi^2}2s \left(1-\chi(\xi)\right) \mathcal{F}\partial_x\left( a|u(s)|^2u(s)\right)ds \label{eq:Na_high_flat} \\
        &\qquad +\frac{1}{\sqrt{2\pi }}\int_1^t\int e^{is\xi^2} 2is \xi \overline{R(|\xi|)} \left(1-\chi(\xi)\right) e^{-i|x||\xi|} a(x)|u(s,x)|^2 u(s,x)dxds. \label{eq:Na_high_tail}
    \end{align}
    We can apply Lemma \ref{lem:dft_local_smoothing} to \eqref{eq:Na_high_tail} and reduce it to the same estimate in \eqref{eq:Na_low} since
    \[
    \left|\xi R(|\xi|)\right|=\left|\frac{q\xi}{2i|\xi|-q}\right|\leq \frac{q}{2}.
    \]
    For \eqref{eq:Na_high_flat}, we first compute
    \begin{equation*}
        \partial_x\left( a|u(s)|^2u(s)\right)=a'|u(s)|^2u(s)+a\left(2|u(s)|^2 \partial_x u(s)+u(s)^2 \overline{\partial_x u(s)}\right).
    \end{equation*}
    Since $1-\chi(\xi)\leq \sqrt{|\xi|}$, Lemma \ref{lem:dft_local_smoothing} reduces the estimate to 
    \begin{align*}
        \left\| s\,\partial_x \left(a|u(s)|^2 u(s)\right)\right\|_{L^1_xL^2_s([1,t])}. 
    \end{align*}
    The first part $a'|u|^2u$ can be handled as \eqref{eq:Na_low} with $a'$ replacing $a$. With \eqref{eq:def_profile}, we compute
    \begin{align}\label{eq:partial_x_u}
        s\partial_x u(s,x)=\frac{ix}{2}u(s,x)+\frac{1}{2}\left[M(s)D(s)\partial_x\left(V(s)w(s)\right)\right](x).
    \end{align}
    Applying H\"older inequality to the second term bounds
    \begin{align*}
        \|a\|_{L^2} \left\| |u(s)|^2 \left|[M(s)D(s)\partial_x\left(V(s)w(s)\right)(x) \right|\right\|_{L^2_{s,x}([1,t]\times \R)}&\lesssim_a \|u(s)\|_{L^4_sL^\infty_x}^2\left\|\partial_x \left(V(s)w(s)\right)\right\|_{L^\infty_sL^2_x}\\
        &\lesssim _a K^3\varepsilon_0^3  \la t\ra ^\beta.
    \end{align*}
    Since $xa\in L^2$, the same argument can be applied to estimate the easy part $ixu$ by
    \begin{align*}
        \|xa\|_{L^2}\|u(s)^3\|_{L^2_{s,x}([1,t]\times \R)}\lesssim_a \|u(s)\|_{L^4_sL^\infty_x}^2\|u(s)\|_{L^\infty_s L^2_x}\lesssim _a K^2\varepsilon_0^3.
    \end{align*}
Summing up, we have \eqref{eq:H1_forcing_a}. 
\end{proof}
\begin{proof}[Proof of Lemma \ref{lem:close_boot}] 
    From \eqref{eq:pde_w_rewrite},
    \begin{equation*}
        w(t)=w(1)-\frac{i\lambda}{2}\int_1^t s^{-1}V(s)^{-1}\big|V(s)w\big|^2 V(s)wds-i\lambda\int_1^t \mathcal{N}_a(s)ds.
    \end{equation*}
    Using Lemma \ref{lem:dft_est}, Corollary \ref{cor:estimate_V}, Proposition \ref{prop:homo_sobolev_est} and $w(t,0)=0$, 
    \begin{equation*}
\begin{split}
    s^{-1}\left\|V(s)^{-1}(|V(s)w|^2V(s)w)\right\|_{H^1}
    &\lesssim s^{-1}\left(s^{\frac{1}{2}}|[V(s)w](0)|^3+\||V(s)w|^2V(s)w\|_{H^1}\right)\\
    &\lesssim s^{-\frac{5}{4}}\|w\|_{H^1}^3+s^{-1}\|w\|_{H^1}\|w\|_{L^\infty}^2.
\end{split}
\end{equation*}
Applying \eqref{eq:boot} and Lemma \ref{lem:H1_forcing_a}, since $0<\beta<\frac{1}{8}$, we proved \eqref{eq:improved_energy_boot}. For pointwise boundedness, we use \eqref{eq:matrix-pde}. By \eqref{eq:boot}, the tail is now bounded by $K^3\varepsilon_0^3 t^{-\frac{5}{4}+\beta}$. Since $A$ is Hermitian and $\lambda\in \R$, the first term does not contribute to the evolution of $|\vec w(t,\xi)|^2$:
\begin{equation*}
    \partial_t \big|\vec w(t,\xi)\big|^2 = 2\Re \, \left\la -i\lambda \vec {\mathcal{N}_a}(t,\xi), \vec w(t,\xi)\right\ra + \left\la \bigo(K^3\varepsilon_0^3 t^{-\frac{5}{4}+\beta}), \vec w(t,\xi)\right\ra. 
\end{equation*}
Applying \eqref{eq:Na_Linfty_bound} and \eqref{eq:boot}, we obtain
\begin{equation*}
    \big|\vec w(t,\xi)\big|^2-\big|\vec w(1,\xi)\big|^2\lesssim \int_1^t K^4\varepsilon_0^4 \left(s^{-\frac{3}{2}}+s^{-\frac{5}{4}+\beta}\right)ds\lesssim K^4\varepsilon_0^4. 
\end{equation*}
Taking supremum over $\xi$ and square root, we prove \eqref{eq:improved_infty_boot}.
\end{proof}


\subsection{Asymptotic behavior} \label{subsec:asy_phase}
We now show \eqref{eq:phase_correction}. Set $\vec f:=B^*\vec w$. Note \eqref{eq:TR_identities} implies
\begin{equation*}
    \big|\vec S\cdot \vec w\big|^2 =|f_1|^2,\qquad \big|\vec S\cdot \overrightarrow{\refl w}\big|^2 =|f_2|^2.
\end{equation*}
Hence \eqref{eq:matrix-pde} and boundedness of $S_j$ imply that $\vec f$ solves a diagonal system:
\begin{equation}\label{eq:pde_f}
    i\partial_t \vec f=\frac{\lambda}{2t}
    \begin{bmatrix}
        |f_1|^2& 0\\[0.4em]
        0& \,\,|f_2|^2
    \end{bmatrix}\vec f \,+\, \lambda B^*\vec {\mathcal{N}_a}(t)\,+\, B^*\mathcal E(t,\vec w).
\end{equation}
Define the normalized vector, $\vec g$, as
\begin{equation}\label{eq:def_g}
    g_j(t):=e^{i\Theta_j(t)}f_j(t),\quad \Theta_j(t) =\int_1^t \,\frac{\lambda}{2s}\,|f_j(s)|^2 ds,\quad j=1,2.
\end{equation}
Then $\vec g$ solves
\begin{equation}\label{eq:pde_g}
    i\partial_t \vec g=\lambda \mathrm{diag}(e^{i\Theta_1(t)}, \,\, e^{i\Theta_2(t)}) B^*\vec {\mathcal{N}_a}(t)\,+\, \bigo(\varepsilon_0^3 t^{-\frac{5}{4}+\beta}).
\end{equation}
Applying \eqref{eq:Na_Linfty_bound}, we have
\begin{equation}\label{eq:pde_g_est}
    i\partial_t \vec g=\bigo(\varepsilon_0^3 t^{-\frac{5}{4}+\beta}).
\end{equation}
Hence $\vec g(t)$ converges in $L^\infty$ at the rate of $t^{-\frac{1}{4}+\beta}$, as $t\to \infty$. We denote the limit as $\vec \phi$. Using 
\[
\left|e^{ix}-e^{iy}\right|\leq |x-y|,\quad \text{for }x,y\in \R,
\]
we have
\[
f_j(t)=G_j(t)+\bigo(\varepsilon_0^3t^{-\frac{1}{4}+\beta}),\qquad G_j(t):=e^{-\frac{i\lambda}{2}|\Phi_j(t)|^2 \log(t)}\Phi_j,
\]
where $\Phi_j$ equals $\phi_j$ up to multiplication by some phase factor. Returning to $\vec w=B\vec f$,
\begin{equation*}
    \vec w(t)=B [G_1,\,\,\,G_2]^T+\bigo(\varepsilon_0^3t^{-\frac{1}{4}+\beta}).
\end{equation*}
Using Proposition \ref{prop:asy_V} and Proposition \ref{prop:quant_smallness}, we have
\begin{equation*}
    V(t)w(t)=\vec S\cdot \vec w(t)+\bigo(\varepsilon_0t^{-\frac{1}{4}+\beta})\,=\,\left(S_1^2-S_2^2 \right)G_1(t)+\left(S_1\overline{S_2}+S_2\overline{S_1}\right) G_2(t)+\bigo(\varepsilon_0 t^{-\frac{1}{4}+\beta}).
\end{equation*}
Applying \eqref{eq:TR_identities}, we see that $S_1^2-S_2^2=S_1+S_2$ and $S_1\overline{S_2}+S_2\overline{S_1}=0$. Thus 
\begin{equation}
    V(t)w(t)(x)=e^{-\frac{i\lambda}{2}|W(x)|^2 \log(t)}W(x)+\bigo_{L^\infty}(t^{-\frac{1}{4}+\beta}), \quad t\to \infty, \quad W=\left(S_1+S_2\right)\Phi_1.
\end{equation}
Applying definition \eqref{eq:def_profile}, we have \eqref{eq:phase_correction}.


\section{The inverse problem}\label{sec:inverse}
In this section we prove Theorem \ref{thm:uniqueness}.  Throughout the section, $\lambda\neq0$ and $0<\beta<\frac{1}{12}$ are fixed. For $q>0$ and an admissible coefficient $a$, let $\mathcal B_{q,a}\subset\Sigma$ denote the set of initial data for which the solution of \eqref{main} is global and has the asymptotic behavior described in Theorem \ref{thm:modified_scattering}.  For $u_0\in\mathcal B_{q,a}$, we write $u_{q,a}$, $w_{q,a}$, $\vec f_{q,a}$, $\vec g_{q,a}$, and $\vec\phi_{q,a}$ for the corresponding solution, distorted profile, diagonalized profile, renormalized profile, and limiting profile. Thus
\begin{equation}\label{eq:def_inverse_scattering_map}
    S_{q,a}:\mathcal B_{q,a}\longrightarrow
    (L^\infty\cap L^2)\times(L^\infty\cap L^2),
    \qquad
    S_{q,a}(u_0):=\vec\phi_{q,a}.
\end{equation}
We first recover the strength of the point interaction from the linearization of $S_{q,a}$.  Once the two strengths are known to agree, we suppress the common $q$-dependence and recover the inhomogeneous coefficient by the cubic argument developed below. For later use, we introduce the diagonal phase matrix
\[
\mathcal{D}_{q,a}(t,\xi):=\mathrm{diag}(e^{i\Theta_1(t)}, \,\, e^{i\Theta_2(t)}),
\]
where $\Theta_{q,a,j}$ is defined as in \eqref{eq:def_g} for the coefficient $a$ and delta strength $q$. 


\subsection{Recovery of the point-interaction strength}
\label{subsec:recover_q} We first linearize the modified scattering map around the origin. 

\begin{lemma}\label{lem:linearization_scattering_map}
Let $q>0$, let $a$ be admissible, and let $\psi\in\mathcal S(\mathbb R)$.  Then
\begin{equation}\label{eq:linearization_scattering_map}
    S_{q,a}(\varepsilon\psi)=\varepsilon L_q\psi +\mathcal O_{L^\infty\times L^\infty} \bigl(C_{q,a} \varepsilon^3 \|\psi\|_\Sigma^3 \bigr)
\end{equation}
for all sufficiently small $\varepsilon>0$, where the linear operator $L_q$ is defined as
\begin{equation}\label{eq:def_Lq}
    L_q\psi:=B_q^*\overrightarrow{\dft_q\psi}.
\end{equation}
\end{lemma}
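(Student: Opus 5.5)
Set $u_0=\varepsilon\psi$ and track the profile from $t=0$ to $t=\infty$. By definition, $S_{q,a}(u_0)=\lim_{t\to\infty}\vec g(t)$ with $\vec g=\mathcal D_{q,a}B^*\vec w$. At $t=0$ we have $w(0)=\dft u_0=\varepsilon\dft\psi$. We also have $\Theta_j(1)=0$, so $\vec g(1)=B^*\vec w(1)$. The plan is to write
\[
\vec\phi_{q,a}-\varepsilon L_q\psi=\bigl(\vec\phi_{q,a}-\vec g(1)\bigr)+B^*\bigl(\vec w(1)-\vec w(0)\bigr)
\]
and to bound each bracket in $L^\infty\times L^\infty$ by $C_{q,a}\varepsilon^3\|\psi\|_\Sigma^3$. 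Here the bootstrap constant $\varepsilon_0$ of Section \ref{sec:direct} is replaced by $\varepsilon\|\psi\|_\Sigma$, which is admissible for $\varepsilon$ small. We use that $B$ is unitary, so $B^*$ is bounded pointwise, and that $\refl$ commutes with everything, so the vector statement reduces to scalar bounds on $w$ and $\refl w$.

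\emph{Step 1 (times $0\le t\le 1$).} By Duhamel applied to \eqref{eq:pde_w},
\[
w(1)-w(0)=-i\lambda\int_0^1\dft U(-s)\bigl((1+a)|u|^2u\bigr)(s)\,ds.
\]
Estimate \eqref{eq:dft_L1_infty} does not directly apply, because $|u|^2u$ is not localized when $a$ is absent. Instead, I would use $\|\dft U(-s)F\|_{L^\infty_\xi}\lesssim\|\dft U(-s)F\|_{H^1_\xi}$ together with Lemma \ref{lem:dft_est}, in the form $\|\partial_\xi(e^{is\xi^2}\dft F)\|_{L^2}\lesssim\|\langle x\rangle F\|_{L^2}+s\|F\|_{H^1}$. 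Then \eqref{eq:decay_u_small} and \eqref{eq:xu_bound_small} give
\[
\|\langle x\rangle|u|^2u\|_{L^2}+\||u|^2u\|_{H^1}\lesssim\|u\|_{L^\infty}^2\bigl(\|\langle x\rangle u\|_{L^2}+\|u\|_{H^1}\bigr)\lesssim(\varepsilon\|\psi\|_\Sigma)^3 .
\]
The $a$-term is handled in the same way, using $a\in L^\infty$ and $a'\in L^1$. (The $L^1$ norm of $a'$ times the $L^\infty$ norm of $u^3$ costs nothing, since $L^1\subset H^{-1}$ and it is controlled via $\|\cdot\|_{L^\infty_\xi}\lesssim\|\cdot\|_{L^1_x}$ for that piece.) So the $[0,1]$ contribution is $O(\varepsilon^3\|\psi\|_\Sigma^3)$.

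\emph{Step 2 (times $t\ge1$).} Integrate \eqref{eq:pde_g} and \eqref{eq:pde_g_est} from $1$ to $\infty$. The right-hand side consists of $\lambda\mathcal D B^*\vec{\mathcal N}_a$, which is $O(\varepsilon^3 s^{-3/2})$ by \eqref{eq:Na_Linfty_bound}, and $B^*\mathcal E$, which is $O(\varepsilon^3 s^{-5/4+\beta})$ by \eqref{eq:error_E_est} and Proposition \ref{prop:quant_smallness}. Both are integrable on $[1,\infty)$ with cubic prefactor. Hence $\|\vec\phi-\vec g(1)\|_{L^\infty}\lesssim\varepsilon^3\|\psi\|_\Sigma^3$.

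The key point to check is cubic homogeneity. Every bound in Section \ref{sec:direct} is derived with $K\varepsilon_0$ replaced by the actual data size, so each error term really carries the product of three profile norms, each $\lesssim\varepsilon\|\psi\|_\Sigma$. The cubic diagonal term $\frac{\lambda}{2t}|f_j|^2f_j$ is not an error in $\vec f$, but it is exactly removed by the phase $\Theta_j$ in $\vec g$, so no $\log t$ loss appears.

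The main obstacle is Step 1: showing that the short-time Duhamel term is $O(\varepsilon^3)$ in $L^\infty_\xi$ for the non-localized part $|u|^2u$. I would handle it through the $H^1_\xi$ bound above, relying on the weighted virial bound \eqref{eq:xu_bound_small}. Once that and the constant tracking are done, the identity $\vec g(1)=B^*\vec w(1)=\varepsilon B^*\overrightarrow{\dft\psi}+O(\varepsilon^3)=\varepsilon L_q\psi+O(\varepsilon^3)$ concludes the proof.
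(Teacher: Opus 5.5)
Your overall structure is the paper's. You split at $t=1$, use $\Theta_j(1)=0$ and $w(0)=\varepsilon\dft\psi$ so that $B^*\vec w(0)=\varepsilon L_q\psi$, and control $[1,\infty)$ by integrating \eqref{eq:pde_g_est} with $\varepsilon_0$ replaced by $\varepsilon\|\psi\|_\Sigma$. Step 2 is fine. The problem is Step 1: the ``main obstacle'' you identify does not exist, and the detour you take around it has a hole.

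The $L^1\to L^\infty$ bound \eqref{eq:dft_L1_infty} does apply to the non-localized term. Indeed $\dft U(-s)F=e^{is\xi^2}\dft F$, and
\[
\||u|^2u\|_{L^1}\le\|u\|_{L^\infty}\|u\|_{L^2}^2\lesssim(\varepsilon\|\psi\|_\Sigma)^3
\]
by conservation of mass together with the small-time $H^1$ bound \eqref{eq:decay_u_small} and Sobolev embedding. Likewise $\|a|u|^2u\|_{L^1}\le\|a\|_{L^\infty}\|u\|_{L^\infty}\|u\|_{L^2}^2$. Integrating over $[0,1]$ gives the cubic bound at once. This is exactly the paper's argument, and it needs neither the virial weight nor any derivative of $a$.

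Your $H^1_\xi$ route is valid for the homogeneous piece $|u|^2u$, but not for $F=a|u|^2u$ as written. The term $2is\xi\,\dft F$ in $\partial_\xi(e^{is\xi^2}\dft F)$ requires $\|\xi\dft F\|_{L^2}^2=\|F'\|_{L^2}^2+q|F(0)|^2<\infty$. But $F'$ contains $a'|u|^2u$, which for admissible $a$ (only $a'\in L^1$) is in $L^1$ but generally not in $L^2$; think of $a'$ with an $|x|^{-1/2}$-type singularity at a point where $u\neq0$.

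The parenthetical about $L^1\subset H^{-1}$ does not repair this. Knowing $F'\in H^{-1}$ gives only a weighted bound on $\xi\dft F$, not the $L^2_\xi$ bound the $H^1_\xi$ norm needs. Nor can you substitute an $L^\infty_\xi$ estimate for one piece of an $L^2_\xi$ norm. If what you meant is to bound the entire $a$-term in $L^\infty_\xi$ via \eqref{eq:dft_L1_infty}, that is correct. But then the same one-line bound also handles $|u|^2u$, contradicting your stated obstacle. With that replacement, your proof coincides with the paper's.
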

\begin{proof}
Since the phase vanishes at $t=1$, one has
\[
    S_{q,a}(\varepsilon\psi)=B_q^*\vec w_{q,a }(1)+ \int_1^ \infty \partial_t \vec g_{q,a}(t)\,dt.
\]
Duhamel's formula gives
\[
\begin{split}
    B_q^*\vec w_{q,a}(1) =\varepsilon L_q\psi -i\lambda B_q^*\int_0^1 \overrightarrow{\dft_qU_q(-t) \bigl[(1+a)|u_{q,a}|^2u_{q,a}\bigr]}\,dt.
\end{split}
\]
The $L^1\to L^\infty$ bound for $\dft_q$, conservation of mass, and the small-time $H^1$ bound imply that the integral is
$\mathcal O_{L^\infty\times L^\infty}(C_{q,a} \varepsilon^3 \|\psi\|_\Sigma^3)$. The same bound for the integral over $[1,\infty)$ follows from \eqref{eq:pde_g_est}.  This proves
\eqref{eq:linearization_scattering_map}.
\end{proof}
With this linearization lemma, we can recover $q$ from $S_{q,a}$:
\begin{proposition}\label{prop:recover_q}
Let $q,\widetilde q>0$ and let $a,b$ be admissible. If
$S_{q,a}=S_{\widetilde q,b}$ in $\mathcal{B}_{q,a}\cap\mathcal{B}_{\tilde q,b}$, then
$q=\widetilde q$.
\end{proposition}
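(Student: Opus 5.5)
The plan is to use Lemma \ref{lem:linearization_scattering_map} to pass to the linearizations and then read $q$ off an explicit formula. Fix $\psi\in\mathcal S(\R)$. For all sufficiently small $\varepsilon>0$ the datum $\varepsilon\psi$ lies in $\mathcal B_{q,a}\cap\mathcal B_{\widetilde q,b}$. Applying \eqref{eq:linearization_scattering_map} to both maps, dividing by $\varepsilon$ and letting $\varepsilon\to0$ gives
\[
L_q\psi=L_{\widetilde q}\psi\quad\text{in }L^\infty\times L^\infty,\qquad\text{for every }\psi\in\mathcal S(\R).
\]
The cubic errors are $\mathcal O(\varepsilon^3)$, so they disappear after division by $\varepsilon$. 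Since $B_q$ is unitary, this means $B_q^*\overrightarrow{\dft_q\psi}=B_{\widetilde q}^*\overrightarrow{\dft_{\widetilde q}\psi}$ pointwise in $\xi$ (almost everywhere, hence everywhere, by continuity for Schwartz $\psi$).

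Next I compute the first component of $L_q\psi$ explicitly. Here $B_q^*$ has first row $(\overline{S_1},-\overline{S_2})$, so
\[
(L_q\psi)_1(\xi)=\overline{T_q(|\xi|)}\,\dft_q\psi(\xi)-\overline{R_q(|\xi|)}\,\dft_q\psi(-\xi).
\]
By Lemma \ref{lem:relation}, for $\xi>0$ we have $\dft_q\psi(\xi)=\hat\psi(\xi)+\overline{R_q}\,\Psi(\xi)$ and $\dft_q\psi(-\xi)=\hat\psi(-\xi)+\overline{R_q}\,\Psi(\xi)$, where $\Psi(\xi)=\frac{1}{\sqrt{2\pi}}\int e^{-i|x|\xi}\psi\,dx$. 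Using $T=1+R$, this becomes
\[
(L_q\psi)_1(\xi)=\overline{T_q}\hat\psi(\xi)-\overline{R_q}\hat\psi(-\xi)+\overline{R_q}\,\Psi(\xi),\qquad \xi>0.
\]
I would now choose a concrete test function. A convenient choice is an odd $\psi$: then $\Psi\equiv0$ and $\hat\psi(-\xi)=-\hat\psi(\xi)$, so $(L_q\psi)_1=(\overline{T_q}+\overline{R_q})\hat\psi$. This is bad, because $|T+R|=1$ and the expression may not separate $q$ well. I would instead take $\psi$ even, with $\hat\psi(\xi)\neq0$ for $\xi>0$, for instance a Gaussian. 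Then $\Psi(\xi)=\hat\psi(\xi)+\hat\psi(-\xi)$ up to the convention, which equals $2\hat\psi(\xi)$, and
\[
(L_q\psi)_1(\xi)=\bigl(\overline{T_q}+\overline{R_q}\bigr)\hat\psi(\xi).
\]
Here I need to check the exact form of $\Psi$ against Lemma \ref{lem:relation}. Alternatively I can avoid this computation entirely by comparing both components, or by using $\psi$ supported in $x>0$. The cleanest identity to aim for is some explicit scalar function $m_q(\xi)$, built from $T_q,R_q$ (for example $\overline{T_q}+\overline{R_q}=\frac{-2i\xi+q}{-2i\xi-q}$ or $\overline{R_q}$), multiplying a fixed nonvanishing transform of $\psi$. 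Equating $m_q(\xi)=m_{\widetilde q}(\xi)$ at a single $\xi>0$ then gives an identity of Möbius type in $q$. For instance, $\frac{q-2i\xi}{-q-2i\xi}=\frac{\widetilde q-2i\xi}{-\widetilde q-2i\xi}$ cross-multiplies to $4i\xi(q-\widetilde q)=0$, hence $q=\widetilde q$.

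The main obstacle is the bookkeeping in this last step: I must pick $\psi$ so that the resulting multiplier is genuinely injective in $q$ and the factor it multiplies does not vanish. The safest route is to take $\psi\ge0$, supported in $(0,\infty)$, so that $\Psi(\xi)=\overline{\hat\psi(-\xi)}$-type quantities are explicit. I would then use both components of $L_q\psi$, and solve the resulting linear relations for $R_q(\xi)$ at a small $\xi>0$ where the relevant transforms are nonzero. Since $R_q(\xi)=\frac{q}{2i\xi-q}$ determines $q$ via $q=\frac{2i\xi R_q}{1+R_q}$, this completes the argument.
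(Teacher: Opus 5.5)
Your final route (take $\psi\ge0$ in $C_c^\infty((0,\infty))$, pass to $L_q\psi=L_{\widetilde q}\psi$ via Lemma~\ref{lem:linearization_scattering_map}, and read $T_q$ or $R_q$ off $L_q\psi$ at small $\xi>0$) is the paper's argument, and it works. As written, however, you never perform the decisive computation, and two of your exploratory claims are wrong.

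\textbf{The missing computation.} For $\psi$ supported in $(0,\infty)$ and $\xi>0$ you have $\Psi(\xi)=\widehat\psi(\xi)$. The two components are then
\[
(L_q\psi)_1(\xi)=\widehat\psi(\xi)+\overline{R_q(\xi)}\bigl(2\widehat\psi(\xi)-\widehat\psi(-\xi)\bigr),\qquad (L_q\psi)_2(\xi)=T_q(\xi)\,\widehat\psi(-\xi),
\]
where the second uses $R_q\overline{T_q}+T_q\overline{R_q}=0$. The paper uses the second identity. Since $\widehat\psi(0)=(2\pi)^{-1/2}\int\psi>0$, you get $\widehat\psi(-\xi)\neq0$ for small $\xi>0$, hence $T_q(\xi)=T_{\widetilde q}(\xi)$ and $q=\widetilde q$. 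The paper obtains nonvanishing almost everywhere from analyticity of $\widehat\psi$ instead. Your positivity device is the one the paper itself uses later in Proposition~\ref{prop:stability_q}.

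\textbf{First correction: odd $\psi$ works.} Your dismissal of odd $\psi$ is mistaken, and that choice in fact gives a clean alternative proof. Since $x\mapsto e^{-i|x|\xi}$ is even, $\Psi\equiv0$ for odd $\psi$, so $\widetilde{\mathcal F}_q\psi=\widehat\psi$; odd functions do not see the point interaction. All $q$-dependence then sits in the normalization $B_q^*$, and $(L_q\psi)_1=\overline{(T_q+R_q)}\,\widehat\psi$ with $\overline{(T_q+R_q)}(\xi)=\frac{q-2i\xi}{-q-2i\xi}$. The fact that $|T_q+R_q|=1$ is irrelevant: the phase depends on $q$, and your own cross-multiplication shows this multiplier is injective in $q$. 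With, for example, $\psi(x)=xe^{-x^2}$ one has $\widehat\psi(\xi)\neq0$ for all $\xi\neq0$, so this route uses the first component instead of the paper's second and avoids computing $\widetilde{\mathcal F}_q$ at all.

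\textbf{Second correction: the even case.} For even $\psi$ the claim $\Psi=2\widehat\psi$ is false. In fact
\[
\Psi(\xi)=\sqrt{2/\pi}\int_0^\infty e^{-ix\xi}\psi(x)\,dx=\widehat\psi(\xi)-i\sqrt{2/\pi}\int_0^\infty\sin(x\xi)\psi(x)\,dx,
\]
so $(L_q\psi)_1=(\overline{T_q}+\overline{R_q})\widehat\psi$ does not hold there. The correct formula is $(L_q\psi)_1=\widehat\psi+\overline{R_q}\,\Psi$. This would still determine $R_q(\xi)$ wherever $\Psi(\xi)\neq0$, for instance at small $\xi>0$ for a Gaussian.

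Remove or correct these two detours and write out the displayed identity for your chosen test function; the proof is then complete.
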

\begin{proof}
Lemma \ref{lem:linearization_scattering_map} implies that
\begin{equation}\label{eq:Lq_equality}
    L_q\psi=L_{\widetilde q}\psi,
    \qquad \psi\in\mathcal S(\mathbb R).
\end{equation}
Choose a nonzero $\psi\in C_c^\infty((0,\infty))$.  For $\xi>0$, Lemma
\ref{lem:relation} gives
\[
    \dft_q\psi(\xi)=\overline{T_q(\xi)}\widehat\psi(\xi),
    \qquad
    \dft_q\psi(-\xi)
      =\widehat\psi(-\xi)+\overline{R_q(\xi)}\widehat\psi(\xi).
\]
Using \eqref{eq:def_B} and
$R_q\overline{T_q}+T_q\overline{R_q}=0$, we obtain
\begin{equation}\label{eq:Lq_second_component}
    [L_q\psi]_2(\xi)=T_q(\xi)\widehat\psi(-\xi),
    \qquad \xi>0.
\end{equation}
Since $\widehat\psi$ is analytic and not identically zero, it is nonzero almost
everywhere on $(-\infty,0)$.  Equations \eqref{eq:Lq_equality} and
\eqref{eq:Lq_second_component} therefore imply
$T_q=T_{\widetilde q}$ on $(0,\infty)$.  The explicit formula for $T_q$ then
gives $q=\widetilde q$.
\end{proof}


\subsection{Analysis of the modified scattering map} We extract the cubic part of the difference of two modified scattering maps in this subsection. For each test function $\psi\in\mathcal{S}(\R)$, we define the testing vector $\vec{h}_\psi=B^*\overrightarrow{ \dft \psi}$. This is the natural vector against which we pair the two modified scattering maps. 

\begin{lemma}\label{lem:born_cubic_replacement}
    Let $a$ be admissible and let $u_a$ be the solution to \eqref{main} with initial data $\varepsilon\psi$. Then
    \begin{equation}\label{eq:born_small_time_cubic}
        \int_0^1 \left|\left\la (1+a)\left(|u_a|^2 u_a -\varepsilon^3 \big|U(t)\psi\big|^2U(t)\psi\right),\,U(t)\psi\right\ra_{L^2_x}\right|dt\lesssim _{a,\psi}\varepsilon^5,
    \end{equation}
    and 
    \begin{equation}\label{eq:born_large_time_cubic}
        \int_1^\infty \left|\left\la a\left(|u_a|^2 u_a -\varepsilon^3 \big|U(t)\psi\big|^2U(t)\psi\right),\,U(t)\psi\right\ra_{L^2_x}\right|dt\lesssim _{a,\psi}\varepsilon^5.
    \end{equation}
\end{lemma}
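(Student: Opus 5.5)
The plan is to compare $u_a$ with the linear evolution $\varepsilon U(t)\psi$ via Duhamel. Write $u_a(t)=\varepsilon U(t)\psi+r(t)$ with
\[
r(t)=-i\lambda\int_0^t U(t-s)\bigl[(1+a)|u_a|^2u_a\bigr](s)\,ds .
\]
The pointwise identity $\bigl||z|^2z-|z'|^2z'\bigr|\lesssim|z-z'|(|z|^2+|z'|^2)$ then reduces both estimates to bounding
\[
\int \|r(t)\|\,\bigl(|u_a|^2+\varepsilon^2|U(t)\psi|^2\bigr)\,|U(t)\psi|\,|w|\,dx\,dt ,
\]
where $w\in\{1,a\}$ is the weight and the norm on $r$ is chosen so that the remaining factors are integrable. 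The aim is $\|r\|=O(\varepsilon^3)$ in a suitable sense. Combined with $|u_a|^2+\varepsilon^2|U\psi|^2=O(\varepsilon^2)$ in the appropriate norms, this yields $\varepsilon^5$.

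\emph{Small times, \eqref{eq:born_small_time_cubic}.} On $[0,1]$, \eqref{eq:decay_u_small} gives $\|u_a(s)\|_{H^1}\lesssim_a\varepsilon$. Unitarity of $U$ and the boundedness of $1+a$ then give
\[
\|r(t)\|_{L^2}\le\int_0^1\|(1+a)|u_a|^2u_a\|_{L^2}\,ds\lesssim_a\varepsilon^3 .
\]
Since $\psi\in\mathcal S$, Lemma \ref{lem:delta_flow} gives $\|U(t)\psi\|_{L^\infty}\lesssim_\psi1$, and $\|U(t)\psi\|_{L^2}=\|\psi\|_{L^2}$. Using $\|u_a\|_{L^\infty}\lesssim\varepsilon$, the integrand on $[0,1]$ is bounded by
\[
\|r\|_{L^2}\,\varepsilon^2\,\|U\psi\|_{L^\infty}\,\|U\psi\|_{L^2}\lesssim\varepsilon^5 .
\]

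\emph{Large times, \eqref{eq:born_large_time_cubic}.} Here the $L^2$ bound on $r$ is useless without decay, so I would exploit the localization of $a$. By \eqref{eq:decay_u} and Lemma \ref{lem:delta_flow}, both $u_a$ and $\varepsilon U(t)\psi$ are $O(\varepsilon t^{-1/2})$ in $L^\infty$, and $U(t)\psi=O_\psi(t^{-1/2})$. Putting $r$ in $L^\infty$ and all other factors in $L^\infty$, and using $a\in L^1$, the integrand is bounded by
\[
\|a\|_{L^1}\,\|r(t)\|_{L^\infty}\,\varepsilon^2t^{-1}\,t^{-1/2}.
\]
It therefore suffices to show $\|r(t)\|_{L^\infty}\lesssim\varepsilon^3\langle t\rangle^{\gamma}$ for some $\gamma<1/2$.

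\emph{Main obstacle: an $L^\infty$ bound on $r(t)$.} Apply Lemma \ref{lem:delta_flow} to $U(t-s)F(s)$ with $F=(1+a)|u_a|^2u_a$. This requires $\|F(s)\|_{L^1}+\|F(s)\|_\Sigma$, and for the non-localized $|u|^2u$ part the $\Sigma$ norm involves $\|xu\|_{L^2}$, which grows. To avoid this I would estimate $r$ in profile variables instead. Write $\varepsilon U(t)\psi=M D V\,\varepsilon\dft\psi$ and $u_a=MDVw$, so that $r=MDV(t)\bigl(w(t)-\varepsilon\dft\psi\bigr)$. Then Corollary \ref{cor:estimate_V} gives
\[
\|r(t)\|_{L^\infty}\lesssim t^{-1/2}\Bigl(\|w(t)-\varepsilon\dft\psi\|_{L^\infty}+t^{-1/4}\|w(t)-\varepsilon\dft\psi\|_{H^1}\Bigr).
\]
To control the right-hand side I would use the profile equation \eqref{eq:pde_w_rewrite} on $[0,t]$ and split the forcing into two parts. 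The homogeneous part contributes $\lesssim\varepsilon^3\log t$ in $L^\infty$, by \eqref{eq:homogeneous_expansion_first}, and $\lesssim\varepsilon^3t^{\beta}$ in $H^1$, by the proof of Lemma \ref{lem:close_boot}. The localized part contributes $\varepsilon^3$ in $L^\infty$, by \eqref{eq:Na_Linfty_bound}, and $\varepsilon^3t^\beta$ in $H^1$, by Lemma \ref{lem:H1_forcing_a}. Together these give
\[
\|r(t)\|_{L^\infty}\lesssim\varepsilon^3t^{-1/2}\log t\le\varepsilon^3,
\]
so the large-time integrand is $O(\varepsilon^5t^{-3/2})$, which is integrable on $[1,\infty)$. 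The delicate point is keeping every constant uniform in $\varepsilon$; this holds because the bootstrap bounds of Proposition \ref{prop:quant_smallness} are uniform once $\varepsilon\|\psi\|_\Sigma\le\varepsilon_0$.
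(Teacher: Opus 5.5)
Your argument works, but for large times it takes a much heavier route than the paper, and the reason you give for the detour is mistaken. The $L^2$ bound on $r$ is not useless. Duhamel, unitarity of $U$, $a\in L^\infty$ and \eqref{eq:decay_u} give $\|r(t)\|_{L^2}\lesssim\int_0^t\|u_a\|_{L^2}\|u_a\|_{L^\infty}^2\,ds\lesssim\varepsilon^3\log(1+t)$, and this logarithmic growth is harmless. The paper puts $a$ in $L^2$ rather than $L^1$, which is available since $a\in L^1\cap L^\infty$. The integrand on $[1,\infty)$ is then at most $\|a\|_{L^2}\|r(t)\|_{L^2}\,\varepsilon^2t^{-1}\,\|U(t)\psi\|_{L^\infty}\lesssim\varepsilon^5t^{-3/2}\log(1+t)$: the three decaying $L^\infty$ factors already supply $t^{-3/2}$. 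Your small-time step is exactly the paper's. Your large-time route instead upgrades $r$ to $L^\infty$ via $r=M(t)D(t)V(t)\bigl(w(t)-\varepsilon\dft\psi\bigr)$ and pairs with $a\in L^1$. It yields the sharper by-product $\|r(t)\|_{L^\infty}\lesssim\varepsilon^3t^{-1/2}(1+\log t)$. The price is cubic-order control of $w(t)-w(0)$ in both $L^\infty$ and $H^1$, which brings in Lemma \ref{lem:close_boot} and the local-smoothing Lemma \ref{lem:H1_forcing_a}. None of that is needed for this lemma.

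Your route also leaves one step unaddressed. The estimates you cite are on $[1,t]$, and the $\frac{1}{2t}$-form \eqref{eq:pde_w_rewrite} gives a nonintegrable $s^{-1}$ bound near $s=0$. You therefore still need $\|w(1)-\varepsilon\dft\psi\|_{H^1}\lesssim\varepsilon^3$. With only the direct bound $\|w(1)-w(0)\|_{H^1}\lesssim\varepsilon$, the term $t^{-3/4}\|w-w(0)\|_{H^1}$ gives an overall error of order $\varepsilon^3$, not $\varepsilon^5$. Admissibility gives only $a'\in L^1$, so $(1+a)|u_a|^2u_a$ need not be in $H^1$, and this bound cannot come from a direct Sobolev estimate. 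You would have to rerun the low/high-frequency local smoothing argument of Lemma \ref{lem:H1_forcing_a} on $[0,1]$. Alternatively, estimate the $[0,1]$ Duhamel piece of $r$ in $L^2$ against $a\in L^2$, as the paper does. With that step supplied, your proof goes through.
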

\begin{proof}
    We denote $r_a(t):=u_a(t)-\varepsilon U(t)\psi $. Duhamel's formula and Proposition \ref{prop:quant_smallness} give
    \begin{align}
        \left\|r_a(t)\right\|_{L^2_x}\lesssim_a\int_0^t \|u_a(s)\|_{L^2}\|u_a(s)\|_{L^\infty}^2 ds\lesssim_{a,\psi} \varepsilon^3 \log(1+t).
    \end{align}
    Moreover, Lemma \ref{lem:delta_flow} implies
    \begin{align*}
        \|u_a(t)\|_{L^\infty}+\left\|\varepsilon U(t)\psi\right\|_{L^\infty}\lesssim_{a,\psi} \varepsilon\la t\ra ^{-\frac{1}{2}}.
    \end{align*}
    Hence, 
    \begin{align*}
        \left\||u_a|^2 u_a -\varepsilon^3 \big|U(t)\psi\big|^2U(t)\psi \right\|_{L^2}\lesssim \|r_a\|_{L^2}\left(\|u_a(t)\|_{L^\infty}^2+\left\|\varepsilon U(t)\psi\right\|_{L^\infty}^2\right)\lesssim \varepsilon^5 \la t\ra^{-1}\log(1+t).
    \end{align*}
    On the finite interval $0\leq t\leq 1$, we estimate 
    \begin{align*}
        \int_0^1 \left|\left\la (1+a)\left(|u_a|^2 u_a -\varepsilon^3 \big|U(t)\psi\big|^2U(t)\psi\right),\,U(t)\psi\right\ra_{L^2_x}\right|dt\lesssim \left(1+\|a\|_{L^\infty}\right) \|\psi\|_{L^2}\,\varepsilon^5.
    \end{align*}
    For $t\geq 1$, we instead use the localization of $a$. 
    \begin{align*}
         \int_1^\infty \left|\left\la a\left(|u_a|^2 u_a -\varepsilon^3 \big|U(t)\psi\big|^2U(t)\psi\right),\,U(t)\psi\right\ra_{L^2_x}\right|dt\lesssim_\psi \|a\|_{L^2} \,\varepsilon^5 \int_1^\infty t^{-\frac{3}{2}}\log(1+t)dt\lesssim_{a,\psi}\varepsilon^5.
    \end{align*}
\end{proof}

The following lemma isolates the extra difficulty created by the point interaction. Unlike the free problem in, \textit{e.g.}\cite{CM1}, where the profile equation is scalar and the nonresonant cubic error can be written as an explicit universal correction, the delta flow couples the two channels $\xi$ and $-\xi$. The diagonalization of the homogeneous long-range dynamics is therefore only asymptotic, leaving a remainder $\mathcal{E}(t,\vec w_a)$ with no simple explicit scalar representation. By \eqref{eq:error_E_est}, this remainder is still cubic in the profile, and hence appears at the same order as the localized term containing $a$. The key is to show $\mathcal{E}(t,\vec w_b)-\mathcal{E}(t,\vec w_a)$ is of higher order. 
\begin{lemma}\label{lem:cancel_homo_error}
    Let $u_a$ and $u_b$ be the two solutions with common initial data $\varepsilon\psi$. Then, 
    \begin{equation}\label{eq:propogate_difference_w}
        \|w_a(t)-w_b(t)\|_{L^\infty_\xi}\lesssim \varepsilon^3\la t\ra ^{\beta},\qquad \|w_a(t)-w_b(t)\|_{H^1_\xi}\lesssim \varepsilon^3\la t\ra ^{2\beta},\qquad \text{for }t\geq 0.
    \end{equation}
    Consequently,
    \begin{equation}
        \left|\int_1^\infty \left\la \mathcal{D}_a(t)B^*\mathcal{E}(t,\vec w_a)-\mathcal{D}_b(t)B^*\mathcal{E}(t,\vec w_b), \,\vec{h}_\psi \right\ra dt \right |\lesssim_{a,b,\psi}\varepsilon^5.
    \end{equation}
\end{lemma}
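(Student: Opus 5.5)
The plan is to derive both assertions from a difference version of the bootstrap of Section~\ref{sec:direct}, and then to bound the pairing using the structure of $\mathcal E$.

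\textbf{Step 1: the difference estimates \eqref{eq:propogate_difference_w}.} Set $\delta w:=w_a-w_b$. On $[0,1]$, subtract the Duhamel formulas for $u_a,u_b$. The difference of the cubic terms is controlled by $\|u_a-u_b\|_{\Sigma}$ times the square of the small-time $H^1$ bounds \eqref{eq:small_time_boot}. The forcing $(a-b)|u_b|^2u_b$ has size $\varepsilon^3$. Gronwall then gives $\|\delta w(t)\|_{H^1}\lesssim\varepsilon^3$ for $t\le1$.

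For $t\ge1$, subtract the two copies of \eqref{eq:pde_w_rewrite}.

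For the $H^1$ bound, follow the proof of Lemma \ref{lem:close_boot}. The homogeneous difference is
\[
s^{-1}V^{-1}\bigl(|Vw_a|^2Vw_a-|Vw_b|^2Vw_b\bigr).
\]
By Proposition \ref{prop:homo_sobolev_est}, Corollary \ref{cor:estimate_V} and $\delta w(s,0)=0$ (see \eqref{eq:dft_zero}), its $H^1$ norm is bounded by
\[
s^{-1}\varepsilon^2\|\delta w\|_{H^1}+s^{-1}\varepsilon^2 s^{\beta}\|\delta w\|_{L^\infty}+s^{-5/4}(\dots).
\]
For the localized forcings, write
\[
\mathcal N_a-\mathcal N_b=\dft U(-s)\bigl[(a-b)|u_b|^2u_b\bigr]+\dft U(-s)\bigl[a(|u_a|^2u_a-|u_b|^2u_b)\bigr].
\]
Lemma \ref{lem:H1_forcing_a} applies verbatim to the first term and gives $\varepsilon^3t^\beta$. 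The second term is handled by the same local smoothing argument, now carrying one factor of $u_a-u_b$.

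For the $L^\infty$ bound, use the Hermitian structure. Since $B^*AB$ is diagonal, the difference of the homogeneous terms is a Lipschitz perturbation of size
\[
s^{-1}\varepsilon^2|\vec{\delta w}|+s^{-5/4+\beta}\varepsilon^2\|\delta w\|_{H^1}.
\]
The $\mathcal N$-difference is $O(\varepsilon^3s^{-3/2})$ by \eqref{eq:Na_Linfty_bound}. A pointwise Gronwall argument with the factor $s^{-1}\varepsilon^2$ yields growth $t^{C\varepsilon^2}\le t^{\beta}$. I will run the two bounds as a joint bootstrap,
\[
\|\delta w\|_{L^\infty}\le 2C\varepsilon^3\langle t\rangle^\beta,\qquad \|\delta w\|_{H^1}\le 2C\varepsilon^3\langle t\rangle^{2\beta},
\]
and close it using the smallness of $\varepsilon$.

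\textbf{Step 2: the pairing.} Split
\[
\mathcal D_aB^*\mathcal E(\vec w_a)-\mathcal D_bB^*\mathcal E(\vec w_b)=(\mathcal D_a-\mathcal D_b)B^*\mathcal E(\vec w_a)+\mathcal D_bB^*\bigl(\mathcal E(\vec w_a)-\mathcal E(\vec w_b)\bigr).
\]
\emph{First piece.} Since $\Theta_j$ is Lipschitz in $f_j$, we have $|\Theta_{a,j}-\Theta_{b,j}|\lesssim\int_1^t s^{-1}\varepsilon\cdot\varepsilon^3s^\beta\,ds\lesssim\varepsilon^4t^\beta$. Combined with $\|\mathcal E\|_{L^\infty}\lesssim\varepsilon^3t^{-5/4+\beta}$ and $\vec h_\psi\in L^1$ (as $\psi$ is Schwartz), the integrand is $\lesssim\varepsilon^7t^{-5/4+2\beta}$, which is integrable.

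\emph{Second piece.} The main obstacle is to show that $\mathcal E(t,\cdot)$ is Lipschitz in $\vec w$ with the same time decay as \eqref{eq:error_E_est}. I will exhibit $\mathcal E$ explicitly as
\[
\frac{\lambda}{2t}\Bigl[V^{-1}(|Vw|^2Vw)-\text{the leading terms of \eqref{eq:homogeneous_expansion_first}}\Bigr].
\]
Each of its pieces has the form $V^{-1}-\overline{\vec S}\cdot$, $V-\vec S\cdot$, or a Fresnel term, applied to a trilinear expression. Since these maps are linear, Proposition \ref{prop:asy_V} applies to differences, giving
\[
|\mathcal E(\vec w_a)-\mathcal E(\vec w_b)|\lesssim t^{-5/4}\varepsilon^2t^{\beta}\|\delta w\|_{H^1}+t^{-5/4}\varepsilon t^{\beta}\|\delta w\|_{L^\infty}\varepsilon t^\beta+\dots\lesssim\varepsilon^5t^{-5/4+3\beta}.
\]
Because $\beta<\tfrac1{12}$, the exponent satisfies $-5/4+3\beta<-1$, so this is integrable; pairing with $\vec h_\psi\in L^1$ then gives $\varepsilon^5$. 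One subtlety needs checking: the $t^{1/2}|\phi(0)|$ loss in \eqref{eq:V_inverse_H1_bound}. It is offset by the $t^{-1/4}$ decay of $|V\delta w(0)|$ from \eqref{eq:V_zero_bound}, exactly as in Lemma \ref{lem:close_boot}.
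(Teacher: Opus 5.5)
Your proposal is correct and follows the paper's argument. You run a coupled Gronwall/bootstrap for $\|w_a-w_b\|_{L^\infty}$ and $\|w_a-w_b\|_{H^1}$, which closes because $\beta<\tfrac1{12}$. You then derive the Lipschitz bound $\|\mathcal E(t,\vec w_a)-\mathcal E(t,\vec w_b)\|_{L^\infty}\lesssim\varepsilon^5t^{-5/4+3\beta}$ from the linearity of the expansions of $V(t)$ and $V(t)^{-1}$ together with $(w_a-w_b)(0)=0$.

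The only variation is how you handle the phase matrices. You control $\mathcal D_a-\mathcal D_b=O(\varepsilon^4t^\beta)$ through the propagated $L^\infty$ difference, whereas the paper compares each of $\mathcal D_a,\mathcal D_b$ with the identity via $\|\mathcal D-I\|\lesssim\varepsilon^2\log t$. Both routes give an integrable contribution of order $\varepsilon^5$ or better.
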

\begin{proof}
    We first prove the stability estimate for $w_a-w_b$. Duhamel’s formula gives
    \begin{align*}
        w_a(t)-w_b(t)=w_a(1)-w_b(1)-\,&\frac{i\lambda}{2} \int_1^t s^{-1}V(s)^{-1} \left(|V(s)w_a|^2 V(s)w_a-|V(s)w_b|^2 V(s)w_b \right)ds\\
        &-i\lambda \int_1^t \left(\mathcal{N}_a(s)-\mathcal{N}_b(s)\right)ds.
    \end{align*}
    The small-time bounds and Duhamel formula on finite interval $[0,1]$ gives
    \begin{align*}
        \|w_a(1)-w_b(1)\|_{H^1_\xi}+\|w_a(1)-w_b(1)\|_{L^\infty_\xi}\lesssim _{a,b,\psi}\varepsilon^3.
    \end{align*}
    Lemma \ref{lem:H1_forcing_a} applied separately to $a$ and $b$ gives
    \begin{align*}
        \left\| \int_1^t \left(\mathcal{N}_a(s)-\mathcal{N}_b(s)\right)ds\right\|_{H^1_\xi}\lesssim_ {a,b,\psi}\varepsilon^3 \la t\ra ^\beta.
    \end{align*}
    Similarly, \eqref{eq:Na_Linfty_bound} gives
    \begin{align*}
        \left\|\int_1^t \left(\mathcal{N}_a(s)-\mathcal{N}_b(s)\right)ds\right\|_{L^\infty_\xi}\lesssim_{a,b,\psi} \varepsilon^3.
    \end{align*}
    We first estimate the homogeneous difference in $L^\infty_\xi$. Using Proposition \ref{prop:homo_sobolev_est}, Corollary \ref{cor:estimate_V}, 
    \begin{align*}
        &\left\| s^{-1}V(s)^{-1} \left(|V(s)w_a|^2 V(s)w_a-|V(s)w_b|^2 V(s)w_b \right)\right\|_{L^\infty_\xi}\\
        &\quad \qquad \lesssim \varepsilon^2 s^{-1}\|w_a(s)-w_b(s)\|_{L^\infty}+\varepsilon^2 s^{-\frac{5}{4}+\beta}\|w_a(s)-w_b(s)\|_{L^\infty}+\varepsilon^2 s^{-\frac{5}{4}}\|w_a(s)-w_b(s)\|_{H^1}.
    \end{align*}
    Similarly, we estimate the difference in $H^1_\xi$:
    \begin{align*}
        &\left\| s^{-1}V(s)^{-1} \left(|V(s)w_a|^2 V(s)w_a-|V(s)w_b|^2 V(s)w_b \right)\right\|_{H^1_\xi}\\
        &\quad \qquad \lesssim \varepsilon^2s^{-1}\|w_a(s)-w_b(s)\|_{H^1}+\varepsilon^2s^{-1+\beta}\|w_a(s)-w_b(s)\|_{L^\infty}+\varepsilon^2s^{-\frac{5}{4}+2\beta}\|w_a(s)-w_b(s)\|_{H^1}.
    \end{align*}
    We estimate $\|w_a(t)-w_b(t)\|_{L^\infty_\xi}$ and $\|w_a(t)-w_b(t)\|_{H^1_\xi}$ by a weighted coupled Gr\"onwall argument. Indeed, one imposes
    \[
    M(T):=\sup_{1\leq t\leq T} t^{-\kappa} \|w_a(t)-w_b(t)\|_{L^\infty_\xi}+\sup_{1\leq t\leq T}t^{-\beta-\kappa}\|w_a(t)-w_b(t)\|_{H^1_\xi},\quad \kappa=K\varepsilon^2,
    \]
    and obtains
    \[
    M(T)\lesssim_\beta \varepsilon^3 +\left(\frac{\varepsilon^2}{\kappa}+\frac{\varepsilon^2}{\beta+\kappa}+\varepsilon^2\right)M(T).
    \]
    Choosing $K$ sufficiently large and $\varepsilon$ sufficiently small, separately, closes the bootstrap, provided
    \[
    3\beta+\kappa<\frac{1}{4}.
    \]
    Since $\beta \in (0,\frac{1}{12})$, this is a condition on the smallness of $\varepsilon$. Hence, we have
    \begin{align*}
        \|w_a(t)-w_b(t)\|_{L^\infty_\xi}\lesssim_{a,b,\beta,\psi}\varepsilon^3 t^{K\varepsilon^2}, \qquad \|w_a(t)-w_b(t)\|_{H^1_\xi}\lesssim_{a,b,\beta,\psi} \varepsilon^3 t^{\beta+K\varepsilon^2}.
    \end{align*}
    With $\varepsilon$ small enough, we proved \eqref{eq:propogate_difference_w}. 

    We now estimate the difference of homogeneous remainders. Write, for simplicity
    \[
    P_a(t):=\vec S\cdot \vec w_a(t),\quad T_a(t):=V(t)w_a(t)-P_a(t),\quad Q_a(t):=\vec S\cdot \overrightarrow{\refl w_a},\quad \widetilde{T_a}(t)=\refl V(t)w_a(t)-Q_a(t),
    \]
    and respectively for $b$. By \eqref{eq:V_approximation}, 
    \[
    \big\|T_a(t)\big\|_{L^\infty_\xi}+\big\|T_b(t)\big\|_{L^\infty_\xi}+\big\|\widetilde{T_a}(t)\big\|_{L^\infty_\xi}+\big\|\widetilde{T_b}(t)\big\|_{L^\infty_\xi}\lesssim_{a,b,\psi} \varepsilon t^{-\frac{1}{4}+\beta}.
    \]
    Moreover, 
    \[
    \big\|T_a(t)-T_b(t)\big\|_{L^\infty_\xi}+\big\|\widetilde{T_a}(t)-\widetilde{T_b}(t)\big\|_{L^\infty_\xi}\lesssim_{a,b,\beta,\psi} \varepsilon^3  t^{-\frac{1}{4}+2\beta}.
    \]
    Consider the first component of the homogeneous approximation error. It is enough to estimate 
    \begin{align*}
        &\quad\big\|\left(|P_a+T_a|^2 (P_a+T_a)-|P_a|^2 P_a\right)-\left(|P_b+T_b|^2 (P_b+T_b)-|P_b|^2 P_b\right)\big\|_{L^\infty_\xi}\\
        &\lesssim \left(\|P_a\|_{L^\infty}+\|P_b\|_{L^\infty}+\|T_a\|_{L^\infty}+\|T_b\|_{L^\infty}\right)^2 |T_a-T_b\|_{L^\infty}\\
        &\qquad\quad +\left(\|P_a\|_{L^\infty}+\|P_b\|_{L^\infty}+\|T_a\|_{L^\infty}+\|T_b\|_{L^\infty}\right)\left(\|T_a\|_{L^\infty}+\|T_b\|_{L^\infty} \right)\|P_a-P_b\|_{L^\infty}\\
        &\lesssim_{a,b,\psi}\,\varepsilon^5 t^{-\frac{1}{4}+2\beta}+\varepsilon^5 t^{-\frac{1}{4}+3\beta}.
    \end{align*} 
    The reflected component is identical. This estimate controls the contribution arising from the replacement of $V(t)w_{a}$ or $V(t)w_{b}$ by its leading expression. We still need to estimate the Fresnel term and the remainder in the asymptotic expansion of $V(t)^{-1}$. By Proposition \ref{prop:asy_V}, the two remaining contributions to $\mathcal{E}(t,\vec w_a)-\mathcal{E}(t,\vec w_b)$ are bounded by
    \begin{align*}
        t^{-\frac{3}{2}}\big|G_a(t,0)-G_b(t,0)\big|+t^{-\frac{5}{4}}\big\|G_a(t)-G_b(t)\big\|_{H^1}\lesssim _{a,b,\psi} t^{-\frac{7}{4}+4\beta}\varepsilon^5+t^{-\frac{5}{4}+2\beta}\varepsilon^5,
    \end{align*}
    where 
    \[
    G_a(t):=|V(t)w_a(t)|^2 V(t)w_a(t),\quad G_b(t):=|V(t)w_b(t)|^2 V(t)w_b(t),
    \]
    for simplicity. Therefore, we conclude
    \begin{equation*}
        \left\|\mathcal{E}(t,\vec w_a)-\mathcal{E}(t,\vec w_b)\right\|_{L^\infty_\xi}\lesssim_{a,b,\psi} \varepsilon^5 t^{-\frac{5}{4}+3\beta},\quad t\geq 1.
    \end{equation*}
    Since $\beta <\frac{1}{12}$, this is integrable in time. Also $\vec h_\psi\in L^1_\xi\times L^1_\xi$, as $\psi\in\mathcal{S}$. Thus, 
    \begin{align*}
        \left|\int_1^\infty \left\la B^*\mathcal{E}(t,\vec w_a)-B^*\mathcal{E}(t,\vec w_b), \,\vec{h}_\psi \right\ra dt \right |\lesssim_{a,b,\psi}\varepsilon^5.
    \end{align*}
    It remains only to account for the phase matrices. From the direct estimates,
    \begin{align*}
        \left\| \mathcal{D}_a(t)-I\,\right\|+\left\| \mathcal{D}_b(t)-I\,\right\|\lesssim \sum_{j=1}^2 \left\|\Theta_{a,j}(t)\right\|_{L^\infty}+\left\|\Theta_{b,j}(t)\right\|_{L^\infty}\lesssim_{a,b,\psi}\varepsilon^2 \log(t),\quad t\geq 1,
    \end{align*}
    with 
    \begin{align*}
        \left\|\mathcal{E}(t,\vec w_a)\right\|_{L^\infty_\xi}+ \left\|\mathcal{E}(t,\vec w_b)\right\|_{L^\infty_\xi}\lesssim_{a,b,\psi} \varepsilon^3 t^{-\frac{5}{4}+\beta}.
    \end{align*}
    Hence, the contribution with $\mathcal{D}_a(t)-I$ or $\mathcal{D}_b(t)-I$ is $\bigo(\varepsilon^5)$. This proves the lemma.
\end{proof}

We are now ready to identify the third-order action of the modified scattering map.
\begin{proposition}\label{prop:difference_third_order}
    Let $a,b$ be admissible and let $\psi\in\mathcal{S}(\R)$. Then, for sufficiently small $\varepsilon>0$, 
    \begin{equation}\label{eq:difference_third_order}
        \left \la \,S_a(\varepsilon\psi)-S_b(\varepsilon\psi) ,\,\vec{h}_\psi\right\ra=-2i\lambda\varepsilon^3 \int_0^\infty \int (a-b)\big| U(t)\psi\big|^4 dxdt+\bigo_{a,b,\psi}(\varepsilon^5).
    \end{equation}
\end{proposition}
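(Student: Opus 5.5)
We decompose each scattering map using the integral form of \eqref{eq:pde_g}, pair the decomposition with $\vec h_\psi$, and identify the cubic term in $a-b$. As in the proof of Lemma \ref{lem:linearization_scattering_map}, the phases vanish at $t=1$, so
\[
S_a(\varepsilon\psi)=B^*\vec w_a(1)-i\lambda\int_1^\infty \mathcal D_a(t)B^*\vec{\mathcal N}_a(t)\,dt-i\int_1^\infty \mathcal D_a(t)B^*\mathcal E(t,\vec w_a)\,dt,
\]
and similarly for $b$. Here $B^*\vec w_a(1)=\varepsilon B^*\overrightarrow{\dft\psi}-i\lambda B^*\int_0^1\overrightarrow{\dft U(-t)[(1+a)|u_a|^2u_a]}\,dt$. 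The term $\varepsilon B^*\overrightarrow{\dft\psi}$ cancels in the difference. By Lemma \ref{lem:cancel_homo_error}, the $\mathcal E$-terms also cancel up to $\bigo(\varepsilon^5)$ after pairing with $\vec h_\psi$.

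Next we remove the phase matrices from the $\mathcal N$-terms. Lemma \ref{lem:cancel_homo_error} gives $\|\mathcal D_a(t)-I\|\lesssim\varepsilon^2\log t$, and \eqref{eq:Na_Linfty_bound} gives $\|\vec{\mathcal N}_a(t)\|_{L^\infty}\lesssim\varepsilon^3t^{-3/2}$. Hence replacing $\mathcal D_a$ by $I$ costs $\int_1^\infty\varepsilon^5t^{-3/2}\log t\,dt\,\|\vec h_\psi\|_{L^1}=\bigo(\varepsilon^5)$.

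After this reduction only pairings of the form $\langle B^*\vec F,B^*\overrightarrow{\dft\psi}\rangle$ remain. Since $B$ is unitary pointwise, this equals $\langle\vec F,\overrightarrow{\dft\psi}\rangle_{L^2\times L^2}$. The reflection is an isometry on $L^2_\xi$, so this is $2\langle F,\dft\psi\rangle_{L^2_\xi}$. This identity is where the factor $2$ in \eqref{eq:difference_third_order} comes from. For $F=\dft U(-t)G$, unitarity of $\dft$ and $U(t)$ gives $\langle F,\dft\psi\rangle=\langle G,U(t)\psi\rangle_{L^2_x}$.

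The difference therefore equals
\[
-2i\lambda\int_0^1\big\langle (1+a)|u_a|^2u_a-(1+b)|u_b|^2u_b,\,U(t)\psi\big\rangle\,dt-2i\lambda\int_1^\infty\big\langle a|u_a|^2u_a-b|u_b|^2u_b,\,U(t)\psi\big\rangle\,dt+\bigo(\varepsilon^5).
\]
Lemma \ref{lem:born_cubic_replacement}, applied to $a$ and to $b$, replaces every $|u|^2u$ by $\varepsilon^3|U(t)\psi|^2U(t)\psi$ at cost $\bigo(\varepsilon^5)$. The terms with coefficient $1$ on $[0,1]$ then cancel exactly between $a$ and $b$. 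We are left with $-2i\lambda\varepsilon^3\int_0^\infty\int(a-b)|U(t)\psi|^4\,dx\,dt$. This integral converges, since $\|U(t)\psi\|_{L^\infty}^2\lesssim\langle t\rangle^{-1}$, $\|U(t)\psi\|_{L^2}$ is conserved, and $a-b\in L^\infty$, so the integrand is $\bigo(\langle t\rangle^{-1}\cdot\langle t\rangle^{-1})$.

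The main obstacle is justifying the $\mathcal E$-cancellation and the removal of the phases. Both rest on Lemma \ref{lem:cancel_homo_error}. The remaining point to check is that the bootstrap bounds hold uniformly for data $\varepsilon\psi$, with constants depending on $\|\psi\|_\Sigma$ and $\|\psi\|_{L^1}$. This is needed so that Lemma \ref{lem:delta_flow} applies, and it is what makes every error term $\bigo_{a,b,\psi}(\varepsilon^5)$.
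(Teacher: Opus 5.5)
Your proposal is correct and follows the paper's proof essentially step for step. You split at $t=1$ and dispose of the $\mathcal{E}$-terms and the phase matrices via Lemma \ref{lem:cancel_homo_error} and \eqref{eq:Na_Linfty_bound}. You then convert the pairings using pointwise unitarity of $B$ and unitarity of $\dft$ and $U(t)$, which you usefully make explicit as the source of the factor $2$. Finally, you invoke Lemma \ref{lem:born_cubic_replacement} for $a$ and for $b$.

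One small correction to your side remark: finiteness of $\int_0^\infty\int(a-b)|U(t)\psi|^4\,dx\,dt$ should be justified by $a-b\in L^1$ together with $\|U(t)\psi\|_{L^\infty}^4\lesssim\langle t\rangle^{-2}$. The $L^\infty$--$L^2$ bound you cite only gives $\langle t\rangle^{-1}$, which is not integrable.
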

\begin{proof}
    We write 
    \[
    S_a(\varepsilon\psi)-S_b(\varepsilon\psi)=\vec g_a(1)-\vec g_b(1)+\int_1^\infty \partial_t \left(\vec g_a(t)-\vec g_b(t)\right)dt.
    \]
    We first consider the contribution at $t=1$. Since the phase functions $\Theta_{a,j}(1)=0=\Theta_{b,j}(1)$,
    \begin{align*}
        \left \la \, \vec g_a(1)-\vec g_b(1), \vec h_\psi \,\right\ra&=-2i\lambda \int_0^1 \left \la \, (1+a)|u_a(t)|^2 u_a(t)-(1+b)|u_b(t)|^2 u_b(t) , U(t)\psi \right \ra dt.
    \end{align*}
    By Lemma \ref{lem:born_cubic_replacement}, we further write this as
    \begin{align*}
        \left \la \, \vec g_a(1)-\vec g_b(1), \vec h_\psi \,\right\ra&=-2i\lambda\varepsilon^3 \int_0^1\int  (a-b)\big|U(t)\psi\big|^4 dxdt +\bigo_{a,b,\psi}(\varepsilon
        ^5).
    \end{align*}
    Here the homogeneous 1-part cancels after subtracting the two coefficients. 

    We next consider the integral $(1,\infty)$. Using uncompressed equation for $\vec g_a$:
    \[
    i\partial_t \vec g_a=\lambda \mathcal{D}_a B^*\vec {\mathcal{N}}_a+\mathcal{D}_a B^*\mathcal{E}(t,\vec w_a),
    \]
    and the respective equation for $\vec g_b$, we obtain
    \begin{align*}
        \int_1^\infty \partial_t \left(\vec g_a-\vec g_b\right)dt=-i\lambda \int_1^\infty &\left(\mathcal{D}_a(t) B^*\vec{\mathcal{N}_a}(t)-\mathcal{D}_b(t) B^*\vec{\mathcal{N}_b}(t)\right)dt\\
        &\qquad -i\int_1^\infty \left(\mathcal{D}_a B^*\mathcal{E}(t,\vec w_a)- \mathcal{D}_b B^*\mathcal{E}(t,\vec w_b)\right)dt.
    \end{align*}
    The second integral contributes $\bigo(\varepsilon^5)$, after pairing with $\vec h_\psi$, by Lemma \ref{lem:cancel_homo_error}. For the first term, we first remove the phase matrices as in the proof of Lemma \ref{lem:cancel_homo_error}, since 
    \[
    \left\|\mathcal{N}_a(t)\right\|_{L^\infty}\lesssim_{a,\psi}\varepsilon^3 t^{-\frac{3}{2}},\qquad \left\|\mathcal{N}_b(t)\right\|_{L^\infty}\lesssim_{b,\psi}\varepsilon^3 t^{-\frac{3}{2}}.
    \]
    Thus, using first \eqref{eq:def_Na} and then Lemma \ref{lem:born_cubic_replacement}, we have
    \begin{align*}
        -i\lambda \int_1^\infty &\left \la \,\mathcal{D}_a(t) B^*\vec{\mathcal{N}_a}(t)-\mathcal{D}_b(t) B^*\vec{\mathcal{N}_b}(t), \, \vec h_\psi\right\ra dt \\
        &\qquad \qquad = -2i\lambda\int_1^\infty \left \la\, a|u_a|^2u_a -b|u_b|^2 u_b ,\,U(t)\psi\right\ra dt+\bigo_{a,b,\psi}(\varepsilon^5)\\
        &\qquad \qquad =-2i\lambda \varepsilon^3\int_1^\infty \int\, (a-b)\big|U(t)\psi\big|^4 dxdt + \bigo_{a,b,\psi}(\varepsilon^5).
    \end{align*}
    Combining the small-time and large-time contributions gives \eqref{eq:difference_third_order}.
\end{proof}


\subsection{High-velocity wave packet testing} 
The final step is to extract $a-b$ from the quartic identity. In the free problem treated by Chen–Murphy \cite{CM1}, this is essentially algebraic: testing by the Gaussian function $\mathfrak{g}$ and its translation with the explicit expression of $U_0(t)\mathfrak{g}$ gives the uniqueness of $a$ at every point. The delta potential makes this step substantially less transparent. The distorted Fourier representation couples $\xi$ and $-\xi$, and a direct expansion into flat Fourier pieces produces reflected oscillatory tails with phases involving $|x|$ and $|\xi|$, rather than a clean ray transform of $a-b$.

Our resolution is to test with the high-velocity wave packets supported on one side of the origin and moving away from the delta interaction. With this, we prove
\begin{proposition}\label{prop:recover_a_delta_flow}
    Let $a\in L^1(\R)$. Suppose that
    \begin{equation}\label{eq:assumption_unique}
        \int_0^\infty a\big|U(t)\psi\big|^4 dxdt=0,\qquad \text{for }\psi\in \mathcal{S}(\R),
    \end{equation}
    then $a=0$ almost everywhere on $\R$.
\end{proposition}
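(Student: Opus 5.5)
Test \eqref{eq:assumption_unique} with high-velocity wave packets supported on one side of the origin and moving away from it, so that the delta flow agrees with the free flow up to small errors. Then extract $a$ pointwise from the limiting quartic integral.

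\textbf{Step 1: the test packets.} Fix a nonzero $\varphi\in C_c^\infty(\R)$, a point $x_0>0$, and a velocity $\nu\ge10$. Set
\[
\psi_{\nu,x_0}(x)=e^{i\nu x}\varphi(x-x_0),
\]
and use the analogous packet with $e^{-i\nu x}$ for $x_0<0$. By Galilean invariance of the free flow,
\[
|U_0(t)\psi_{\nu,x_0}(x)|=|U_0(t)\varphi(x-x_0-2\nu t)|.
\]
So the free packet travels to the right with speed $2\nu$ and never returns toward the origin.

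\textbf{Step 2: compare the delta flow with the free flow.} Using Lemma \ref{lem:relation} (equivalently, the eigenfunction \eqref{eq:distorted_eigenfunction}), write $U(t)\psi-U_0(t)\psi$ as oscillatory integrals carrying the factor $R(|\xi|)$ and phases $e^{\pm i|x||\xi|}$. The Fourier mass of $\psi_{\nu,x_0}$ sits near $\xi=\nu$, where $|R(\nu)|\lesssim q/\nu$. The reflected piece involves $\int e^{-i|y||\xi|}\psi(y)\,dy$; for $y>0$ this equals $\widehat\psi(|\xi|)$ up to normalization, so it does not decay in $\nu$ by itself, but it carries the factor $R$.

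I therefore aim for
\[
\|U(t)\psi_{\nu,x_0}-U_0(t)\psi_{\nu,x_0}\|_{L^\infty_x}\lesssim \nu^{-1}\langle t\rangle^{-1/2}.
\]
This should follow from Lemma \ref{lem:osc_W11} applied to amplitudes of the form $R(|\xi|)\widehat\varphi(\xi-\nu)$. Their $W^{1,1}$ norm is $O(1/\nu)$, since $|R|,|R'|\lesssim\nu^{-1}$ on the support of $\widehat\varphi(\cdot-\nu)$ up to Schwartz tails.

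Combined with \eqref{eq:maximal_function_bdd}, the uniform integrability \eqref{eq:free_packet_uniform_integrability}, and $a\in L^1$, this gives
\[
\int_0^\infty\!\!\int a\,|U(t)\psi|^4=\int_0^\infty\!\!\int a\,|U_0(t)\varphi(x-x_0-2\nu t)|^4\,dx\,dt+o(1).
\]
The error terms need care: $\int_0^\infty\langle t\rangle^{-2}\,dt$ converges, and the cross terms are handled by Hölder together with the $L^4$-in-time bounds.

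\textbf{Step 3: rescale and pass to the limit.} Substitute $s=2\nu t$. The leading term becomes
\[
\frac{1}{2\nu}\int a(x)\int_0^\infty\Bigl|U_0\Bigl(\frac{s}{2\nu}\Bigr)\varphi(x-x_0-s)\Bigr|^4\,ds\,dx .
\]
As $\nu\to\infty$, $U_0(s/2\nu)\varphi\to\varphi$ pointwise. Dominated convergence, justified by \eqref{eq:free_packet_uniform_integrability} and $a\in L^1$, shows that $2\nu$ times the leading term tends to
\[
\int a(x)\,\Phi(x-x_0)\,dx,\qquad \Phi(z)=\int_0^\infty|\varphi(z-s)|^4\,ds .
\]
Since the hypothesis is that the whole quantity vanishes, this requires the Step 2 errors to be $o(\nu^{-1})$, not merely $o(1)$. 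This is the main obstacle.

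The first refinement I would try is to improve the comparison to $o(\nu^{-1})$ after integration. The plan is to use local smoothing, Lemma \ref{lem:dft_local_smoothing}: the reflected wave has size $\nu^{-1}$, but it is localized near the origin only for times $t\lesssim\nu^{-1}$, or it travels leftward and so overlaps little with the rightward packet. Its quartic interaction with $a$ should then be $O(\nu^{-2})$ in $L^1_t$, after Hölder with $\|a\|_{L^1}$ and the $L^4_tL^\infty_x$ decay of the reflected part.

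\textbf{Step 4: conclude.} Once the limit holds, we get $(a*\check\Phi)(x_0)=0$ for all $x_0>0$. Here $\Phi$ is a one-sided primitive of $|\varphi|^4$, so differentiating in $x_0$ gives $\int a(x)|\varphi(x-x_0)|^4\,dx=0$. Letting $\varphi$ range over an approximate identity yields $a=0$ a.e. on $\R_+$.

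The mirrored packets, moving left from $x_0<0$, give $a=0$ a.e. on $\R_-$. Since $a\in L^1$, the point $\{0\}$ carries no mass, so $a=0$ almost everywhere on $\R$.
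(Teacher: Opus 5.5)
Your strategy matches the paper's. You use one-sided high-velocity packets; note that $x_0$ must exceed the support radius of $\varphi$ so the packet lies in $(0,\infty)$. You compare the flows as $U(t)\psi_\nu=U_0(t)\psi_\nu+B_\nu(t)$, with $\|B_\nu(t)\|_{L^\infty}\lesssim\nu^{-1}\langle t\rangle^{-1/2}$ from Lemma \ref{lem:osc_W11}. You rescale by $s=2\nu t$ and pass to the limit using \eqref{eq:free_packet_uniform_integrability} and dominated convergence. Finally you differentiate the one-sided primitive, and your Step 4 is fine.

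The gap is the step you flag yourself: you never show that the delta/free discrepancy is $o(\nu^{-1})$. Step 2 asserts only an $o(1)$ error, and the local-smoothing refinement you then propose is purely heuristic (``should then be $O(\nu^{-2})$''). It is also aimed at the wrong quantity. Since $a$ is merely in $L^1$ and not concentrated near the origin, how long the reflected wave stays near $x=0$ does not control the error.

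No refinement is needed; you lost a factor $\nu^{-1}$ by not tracking it in the cross term. Your own Step 3 rescaling shows that the fast packet spends time $O(\nu^{-1})$ over the support of $a$:
\[
\int_0^\infty\!\int|a|\,|U_0(t)\psi_\nu|^4\,dx\,dt\lesssim_\varphi\nu^{-1}\|a\|_{L^1}.
\]
The paper exploits this with the measure $d\mu_\nu=\nu|a|\,dx$, the inequality $\bigl||z+w|^4-|z|^4\bigr|\lesssim|z|^3|w|+|w|^4$, and H\"older in time:
\[
\int_0^\infty\|U_0\psi_\nu\|_{L^4_{d\mu_\nu}}^3\|B_\nu\|_{L^4_{d\mu_\nu}}\,dt\le\Bigl(\int_0^\infty\|U_0\psi_\nu\|_{L^4_{d\mu_\nu}}^4dt\Bigr)^{3/4}\Bigl(\int_0^\infty\|B_\nu\|_{L^4_{d\mu_\nu}}^4dt\Bigr)^{1/4}\lesssim\nu^{-3/4}.
\]
The first factor is $O(1)$: after your substitution $s=2\nu t$ it is bounded by \eqref{eq:free_packet_uniform_integrability}. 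For the second factor,
\[
\int_0^\infty\|B_\nu\|_{L^4_{d\mu_\nu}}^4dt\lesssim\nu\cdot\nu^{-4}\|a\|_{L^1}\int_0^\infty\langle t\rangle^{-2}dt\lesssim\nu^{-3},
\]
and this same bound also handles the pure $|w|^4$ term. Hence $\nu$ times the discrepancy is $O(\nu^{-3/4})\to0$, which is exactly what Step 3 requires. With this estimate inserted your argument closes; as written it does not.
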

\begin{proof}
    We first recover $a$ on the positive half line. Let $\phi\in C_c^\infty((0,\infty))$ and set
    \[
    \psi_\nu(x)=e^{ix\nu}\phi(x),\quad \nu\geq 10.
    \]
    The packet $\psi_\nu$ is supported on the right and has positive velocity, so under the free flow it moves further to the right. 

    We begin by comparing the delta flow with the free flow. Lemma \ref{lem:relation} gives
    \begin{align*}
        \dft \psi_\nu=\widehat{\psi_\nu}+\overline{R(|\xi|)} \hat \phi(|\xi|-\nu).
    \end{align*}
    Moreover, we have $U(t)\psi_\nu$ equals $U_0(t)\psi_\nu$ plus a finite sum of oscillatory remainder terms of the form
    \begin{align*}
        \int e^{-it\xi^2}e^{ix\xi}m_\nu(\xi)d\xi,\quad \text{or}\quad \int e^{-it\xi^2}e^{i|x||\xi|}m_\nu(\xi)d\xi,
    \end{align*}
    with $m_\nu$ being one of
    \[
    \overline{R(|\xi|)} \hat \phi(|\xi|-\nu),\quad |R(|\xi|)|^2 \hat \phi(|\xi|-\nu), \quad R(|\xi|)\hat \phi(\xi-\nu).
    \]
    One can check easily that all of these functions are $W^{1,1}_\xi$ with
    \[
    \left\|m_\nu\right\|_{W^{1,1}_\xi}\lesssim_\phi \nu^{-1},
    \]
    from $R(|\xi|)=\bigo(|\xi|^{-1})$ and $R'(|\xi|)=\bigo(|\xi|^{-2})$. Therefore Lemma \ref{lem:osc_W11} gives
    \begin{align*}
        U(t)\psi_\nu=U_0(t)\psi_\nu+B_\nu(t),\quad \text{with }\left\|B_\nu(t)\right\|_{L^\infty_x}\lesssim \nu^{-1}\la t\ra^{-\frac{1}{2}}.
    \end{align*}
    Using $\left| |z+w|^4 -|z|^4\right|\lesssim |z|^3 |w|+|w|^4 $, and H\"older inequality on $d\mu_\nu=\nu|a(x)|dx$, we have
    \begin{align*}
        &\left|\nu \int_0^\infty \int a(x)\left(\big|U(t)\psi_\nu\big|^4 - \big|U_0(t)\psi_\nu\big|^4\right)dxdt\right|\\
        &\qquad \qquad \qquad \quad \lesssim \int_0^\infty \left\|U_0(t)\psi_\nu\right\|_{L^4_{d\mu_\nu}}^3 \|B_\nu(t)\|_{L^4_{d\mu_\nu}}dt+\int_0^\infty \|B_\nu(t)\|_{L^4_{d\mu_\nu}}^4dt.
    \end{align*}
    The second term is immediately bounded by 
    \[
    \int_0^\infty \|B_\nu(t)\|_{L^4_{d\mu_\nu}}^4dt\lesssim_\phi \nu \|a\|_{L^1} \nu^{-4}\int_0^\infty \la t\ra ^{-2}\lesssim_\phi \|a\|_{L^1}\nu^{-3}.  
    \]
    For the first term, H\"older in time then gives
    \begin{align*}
         \int_0^\infty \left\|U_0(t)\psi_\nu\right\|_{L^4_{d\mu_\nu}}^3 \|B_\nu(t)\|_{L^4_{d\mu_\nu}}dt\lesssim_\phi \nu^{-\frac{3}{4}}\left(  \int_0^\infty \left\|U_0(t)\psi_\nu\right\|_{L^4_{d\mu_\nu}}^4 dt\right)^\frac{3}{4}.
    \end{align*}
    By Galilean invariance of the free flow,
    \[
    U_0(t)\psi_\nu(x)= e^{ix\nu-it\nu^2}U_0(t)\phi(x-2\nu t),
    \]
    we rewrite
    \begin{align*}
         \int_0^\infty \left\|U_0(t)\psi_\nu\right\|_{L^4_{d\mu_\nu}}^4 dt=\frac{1}{2}\int_0^\infty \int |a(x)|\big|U_0\left(\frac{s}{2\nu}\right)\phi(x-s)\big|^4 dxdt
    \end{align*}
    By Lemma \ref{lem:free_packet_localization}, we bound
    \begin{align*}
        \int_0^\infty \left\|U_0(t)\psi_\nu\right\|_{L^4_{d\mu_\nu}}^4 dt\lesssim _\phi \|a\|_{L^1}.
    \end{align*}
    Combining the previous estimates, we obtain 
    \begin{align*}
        \left|\nu \int_0^\infty \int a(x)\left(\big|U(t)\psi_\nu\big|^4 - \big|U_0(t)\psi_\nu\big|^4\right)dxdt\right|\lesssim \nu^{-\frac{3}{4}};
    \end{align*}
    hence
    \begin{equation*}
        \lim_{\nu\to \infty}\nu \int_0^\infty \int a(x)\left(\big|U(t)\psi_\nu\big|^4 - \big|U_0(t)\psi_\nu\big|^4\right)dxdt=0.
    \end{equation*}
    With assumption \ref{eq:assumption_unique}, we have
    \begin{equation}
        \lim_{\nu\to\infty }\int_0^\infty \int a(x) \big|U_0\left(\frac{s}{2\nu}\right)\phi(x-s)\big|^4 dxds=0.
    \end{equation}
    The uniform control \eqref{eq:free_packet_uniform_integrability} and dominated convergence theorem then gives, for any $\phi\in C_c^\infty (\R_+)$
    \begin{equation}
        0=\int_0^\infty \int a(x)|\phi(x-s)|^4dxds=\int _0^\infty |\phi(y)|^4 \left(\int _y^\infty a(s)ds\right) dy.
    \end{equation}
    As a result, we have
    \[
    \int_y^\infty a(s)ds =0,\quad \text{for every }y\in \R_+,
    \]
    since the integral is an absolutely continuous function on $\R_+$ from $a\in L^1$. Differentiating this, we have $a=0$ almost everywhere on the positive half line. $a$ on the negative half line can be recovered similarly with $e^{-ix\nu}\phi$ for $\nu\geq 10$ and $\phi\in C_c^\infty((-\infty,0))$. The same computation can be done to verify that $a=0$ almost everywhere on the whole line.
\end{proof}
We now finish the proof of Theorem \ref{thm:uniqueness}.
\begin{proof}[Proof of Theorem \ref{thm:uniqueness}]
Assume that $S_{q,a}=S_{\widetilde q,b}$ near the origin.  Proposition \ref{prop:recover_q} gives $q=\widetilde q$.  We henceforth denote this common value by $q$ and suppress it from the notation.  Proposition
\ref{prop:difference_third_order} then yields
\[
    \int_0^\infty\!\int_{\mathbb R}
       (a-b)(x)|U_q(t)\psi(x)|^4\,dx\,dt=0,
    \qquad \psi\in\mathcal S(\mathbb R).
\]
Applying Proposition \ref{prop:recover_a_delta_flow} to $a-b\in L^1$ gives
$a=b$ almost everywhere.
\end{proof}


\section{Stability for the modified scattering map}\label{sec:stability}
Throughout this section, $q, \tilde q>0, \lambda\neq 0$ and $\beta \in (0,\frac{1}{12})$ are fixed. We compare the modified scattering maps corresponding to two parameter pairs $(q,a)$ and $(\tilde q, b)$.  We use $C_{a,b,q,\tilde q}$ to denote a positive constant depending only on the fixed parameters $q,\tilde q, \lambda,\beta$ and the admissibility norms $\|a\|_{\mathrm{ad}}$ and $\|b\|_{\mathrm{ad}}$. Its value may change from line to line. We also set
\[
\|\psi\|_X:=\|\psi\|_\Sigma+\|\psi\|_{L^1}.
\]
First, we shall verify that \eqref{def:SaSb} is finite. Indeed, the matrices $B_q$, $B_{\tilde q}$ and the diagonal phase matrices $\mathcal{D}_{q,a}$, $\mathcal{D}_{\tilde q,b}$ are pointwise unitary. Consequently,
\[
\big\|\vec g_{q,a}(t)\big\|_{L^2\times L^2}=\big\|\vec w_{q,a}\big\|_{L^2\times L^2}=\sqrt{2}\|u_0\|_{L^2}.
\]
The same identity holds for $\vec g_{\tilde q,b}(t)$. Since $\vec g_{q,a}(t)$ converges to $S_{q,a}(u_0)$, Fatou's Lemma implies 
\[
\|S_{q,a}(u_0)\|_{L^2\times L^2}+\|S_{\tilde q,b}(u_0)\|_{L^2\times L^2}\leq 2\sqrt{2}\|u_0\|_{L^2}.
\]
Hence $\|S_{q,a}-S_{\tilde q,b}\|<\infty$. 


\subsection{Lipschitz stability of the point-interaction strength}
\label{subsec:stability_delta}
We begin with the linear part of the modified scattering map.  Fix a nonzero, nonnegative function $\psi_0\in C_c^\infty((0,\infty))$, and set
\[
    \vec m_{q,\widetilde q}
    :=\begin{bmatrix}
        0\\[0.3em]
        \mathbf 1_{(0,\infty)}(\xi)
        \bigl(T_q(\xi)-T_{\widetilde q}(\xi)\bigr)
        \widehat\psi_0(-\xi)
      \end{bmatrix}.
\]
\begin{proposition}\label{prop:stability_q}
For $q,\widetilde q>0$ and admissible $a,b$,
\[
    |q-\widetilde q|
    \leq C_{q,\widetilde q}
    \|S_{q,a}-S_{\widetilde q,b}\|.
\]
\end{proposition}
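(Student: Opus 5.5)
We linearize the modified scattering maps at the origin, as in the proof of Proposition \ref{prop:recover_q}, and make that argument quantitative. Test the definition \eqref{def:SaSb} with $\varphi=\varepsilon\psi_0$ for small $\varepsilon>0$, below $\varepsilon_{q,\widetilde q,a,b}/\|\psi_0\|_\Sigma$. By Lemma \ref{lem:linearization_scattering_map}, applied to both pairs $(q,a)$ and $(\widetilde q,b)$,
\[
S_{q,a}(\varepsilon\psi_0)-S_{\widetilde q,b}(\varepsilon\psi_0)=\varepsilon\,(L_q-L_{\widetilde q})\psi_0+\mathcal O_{L^\infty\times L^\infty}(C\varepsilon^3).
\]
The error here is in $L^\infty$, whereas the norm in \eqref{def:SaSb} is $L^2\times L^2$. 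To avoid converting norms, pair both sides with a fixed test vector $\vec k\in L^1\cap L^2$; Cauchy--Schwarz then gives
\[
\varepsilon\,\bigl|\langle (L_q-L_{\widetilde q})\psi_0,\vec k\rangle\bigr|\le \varepsilon\|\psi_0\|_\Sigma\|\vec k\|_{L^2}\,\|S_{q,a}-S_{\widetilde q,b}\|+C\varepsilon^3\|\vec k\|_{L^1}.
\]
Divide by $\varepsilon$ and let $\varepsilon\to0$. The constant in the cubic error depends on $a,b$, but it disappears in this limit, which is why the final constant depends only on $q,\widetilde q$.

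Next, identify $(L_q-L_{\widetilde q})\psi_0$. On $\xi>0$, \eqref{eq:Lq_second_component} gives its second component as $(T_q-T_{\widetilde q})(\xi)\widehat\psi_0(-\xi)$, which is exactly the nontrivial entry of $\vec m_{q,\widetilde q}$. Choose $\vec k=\vec m_{q,\widetilde q}$. Only the second component, restricted to $\xi>0$, enters the pairing, so the limit yields
\[
\|\vec m_{q,\widetilde q}\|_{L^2}^2\le \|\psi_0\|_\Sigma\,\|\vec m_{q,\widetilde q}\|_{L^2}\,\|S_{q,a}-S_{\widetilde q,b}\|.
\]
If $\vec m_{q,\widetilde q}=0$ the conclusion follows from the lower bound below, since then $q=\widetilde q$. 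Otherwise, dividing gives $\|\vec m_{q,\widetilde q}\|_{L^2}\le\|\psi_0\|_\Sigma\|S_{q,a}-S_{\widetilde q,b}\|$. Note that $\vec m_{q,\widetilde q}\in L^1\cap L^2$, since $\widehat\psi_0$ is Schwartz and $|T|\le1$.

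It remains to bound $\|\vec m_{q,\widetilde q}\|_{L^2}$ from below by $c_{q,\widetilde q}|q-\widetilde q|$. A direct computation gives
\[
T_q(\xi)-T_{\widetilde q}(\xi)=\frac{2i\xi\,(q-\widetilde q)}{(2i\xi-q)(2i\xi-\widetilde q)}.
\]
Hence, on a fixed interval $\xi\in[1,2]$,
\[
|T_q-T_{\widetilde q}|\ge \frac{2|q-\widetilde q|}{(4+q)(4+\widetilde q)}.
\]
This gives
\[
\|\vec m_{q,\widetilde q}\|_{L^2}^2\ge \frac{4|q-\widetilde q|^2}{(4+q)^2(4+\widetilde q)^2}\int_1^2|\widehat\psi_0(-\xi)|^2\,d\xi.
\]
The last integral is strictly positive: $\psi_0$ is nonzero and compactly supported, so $\widehat\psi_0$ is real-analytic and not identically zero, and hence cannot vanish on $[-2,-1]$. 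Combining the two bounds gives $|q-\widetilde q|\le C_{q,\widetilde q}\|S_{q,a}-S_{\widetilde q,b}\|$, with $C_{q,\widetilde q}$ depending only on $q,\widetilde q$ and the fixed $\psi_0$.

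The main obstacle is the mismatch of norms: the linearization error is controlled only in $L^\infty$, while $\|S_{q,a}-S_{\widetilde q,b}\|$ is an $L^2$ quantity. Testing against the $L^1\cap L^2$ vector $\vec m_{q,\widetilde q}$ and sending $\varepsilon\to0$ handles this, and it also removes the dependence of the constant on $a$ and $b$.
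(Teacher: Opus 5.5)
Your proof is correct and follows the paper's argument. You pair the linearization from Lemma \ref{lem:linearization_scattering_map} with $\vec m_{q,\widetilde q}\in L^1\cap L^2$, divide by $\varepsilon$ and let $\varepsilon\to0$, then bound $\|\vec m_{q,\widetilde q}\|_{L^2}$ from below; the only difference is cosmetic, in that you get the lower bound on $[1,2]$ from analyticity of $\widehat\psi_0$, whereas the paper uses the global weight $|\xi|\langle\xi\rangle^{-2}$ together with $\widehat\psi_0(0)=\int\psi_0>0$.
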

\begin{proof}
By \eqref{eq:Lq_second_component},
\[
    \big\langle(L_q-L_{\widetilde q})\psi_0,
       \vec m_{q,\widetilde q}\big\rangle
    =\|\vec m_{q,\widetilde q}\|_{L^2\times L^2}^2.
\]
For small $\varepsilon>0$, Lemma \ref{lem:linearization_scattering_map} established
\begin{equation}\label{eq:L_qa}
S_{q,a}(\varepsilon\psi_0)=\varepsilon L_q\psi_0 + \mathcal{R}_{q,a}(\varepsilon,\psi),\qquad \big\| \mathcal{R}_{q,a}(\varepsilon,\psi)\big\|_{L^\infty\times L^\infty}\leq C_{q,a}\varepsilon^3 \|\psi\|_\Sigma^3,
\end{equation}
and analogously for $(\tilde q,b)$. Pairing the two expansions in \eqref{eq:L_qa} with $\vec m_{q,\widetilde q}$, dividing by $\varepsilon$, and letting $\varepsilon\to0$ gives
\[
    \|\vec m_{q,\widetilde q}\|_{L^2\times L^2}^2
    \leq \|S_{q,a}-S_{\widetilde q,b}\|\,\|\psi_0\|_\Sigma \|\vec m_{q,\widetilde q}\|_{L^2\times L^2}.
\]
Here the cubic remainders vanish because
$\|\vec m_{q,\tilde q}\|_{L^1\times L^1}\lesssim\|\psi_0\|_\Sigma$, as $T_q-T_{\tilde q}$ belong to $L^2$. If $q\neq\widetilde q$, we may
divide by the $\|\vec m_{q,\tilde q}\|_{L^2\times L^2}$ norm.  Moreover,
\[
\begin{split}
    |T_q(\xi)-T_{\widetilde q}(\xi)|
    &=\frac{2|\xi|\,|q-\widetilde q|}
      {(4\xi^2+q^2)^{\frac{1}{2}}(4\xi^2+\widetilde q^2)^{\frac{1}{2}}}\\
    &\geq c_{q,\widetilde q}|q-\widetilde q|
      \frac{|\xi|}{\langle\xi\rangle^2}.
\end{split}
\]
Consequently,
\[
    \|\vec m_{q,\widetilde q}\|_{L^2\times L^2}
    \geq c_{q,\widetilde q}|q-\widetilde q|
       \left\|\frac{\xi}{\langle\xi\rangle^2}
       \widehat\psi_0(\xi)\right\|_{L^2(\mathbb R_-)}.
\]
The last norm is positive because $\widehat\psi_0(0)= \int\psi_0>0$. Combining the preceding estimates proves
the proposition. The case $q=\widetilde q$ is immediate.
\end{proof}


\subsection{Dependence on the delta strength}
\label{subsec:parameter_dependence}

Fix a compact interval $I=[q_-,q_+]\subset(0,\infty)$ and an admissible coefficient $b$. All constants in this subsection are uniform for $\rho\in I$.  We write
\[
    u_\rho:=u_{\rho,b},\qquad
    w_\rho:=w_{\rho,b},\qquad
    z_\rho:=\partial_\rho w_\rho,\qquad
    v_\rho:=\partial_\rho u_\rho,
\]
and set $\alpha:=\varepsilon\|\psi\|_X$.  For complete rigor, the argument may first be carried out for the difference quotients in $\rho$; all estimates below are uniform in the increment, and passage to the limit gives the asserted
derivatives. We use derivative notation for simplicity.

\medskip

\paragraph{\bf Linear parameter estimates} Set
\begin{equation}\label{eq:def_d_rho}
    d_\rho(\xi):=\partial_\rho T_\rho(\xi)
    =\partial_\rho R_\rho(\xi)
    =\frac{2i\xi}{(2i\xi-\rho)^2}.
\end{equation}
Direct computation gives
\begin{equation}\label{eq:d_rho_bounds}
    \|d_\rho\|_{L^\infty_\xi}=\frac1{2\rho}\leq\frac1{2q_-},
    \qquad
    \|\partial_\xi d_\rho\|_{L^\infty_\xi}\leq\frac2{q_-^2},
    \qquad
    \|\xi d_\rho(\xi)\|_{L^\infty_r}\leq\frac12.
\end{equation}
Thus
\[
    \sup_{\xi\in\mathbb R}|\partial_\rho T_\rho(\xi)|
    =\sup_{\xi\in\mathbb R}|\partial_\rho R_\rho(\xi)|
    =\frac1{2\rho}.
\]
In particular,
\begin{equation}\label{eq:partial_B_bound}
    \sup_{\rho\in I}\|\partial_\rho B_\rho\|_{L^\infty_\xi}
    +\sup_{\rho\in I}\|\partial_\rho B_\rho^*\|_{L^\infty_\xi}
    \lesssim_I1.
\end{equation}

We record the following useful estimates on $\partial_\rho \dft_\rho$ and $\partial_\rho\dft_\rho^{-1}$.
\begin{lemma}\label{lem:dft_partial_q_est}
Uniformly for $\rho\in I$,
\begin{equation}\label{eq:partial_q_dft}
    \|\partial_\rho\dft_\rho \phi\|_{L^2}\lesssim_I\|\phi\|_{L^2},\qquad 
    \|\partial_\rho\dft_\rho^{-1}\psi\|_{L^2} \lesssim_I\|\psi\|_{L^2},
\end{equation}
\begin{equation}\label{eq:partial_q_dft_H1}
    \|\partial_\rho\dft_\rho \phi\|_{H^1}
      \lesssim_I\|\langle x\rangle \phi\|_{L^2},
\end{equation}
\begin{equation}\label{eq:partial_q_dft_Linfty}
     \|\partial_\rho\dft_\rho \phi\|_{L^\infty}
      \lesssim_I\|\phi\|_{L^1},
\end{equation}
\begin{equation}\label{eq:partial_q_dft_weight}
    \|\xi\partial_\rho\dft_\rho \phi\|_{L^2}
      \lesssim\|\phi\|_{L^2}.
\end{equation}
Moreover, $(\partial_\rho\dft_\rho \phi)(0)=0$.
\end{lemma}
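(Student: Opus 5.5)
We differentiate the explicit formula of Lemma \ref{lem:relation} in $\rho$. Since only the reflection coefficient depends on $\rho$, and $\partial_\rho R_\rho=d_\rho$,
\[
\partial_\rho\dft_\rho\phi(\xi)=\frac{1}{\sqrt{2\pi}}\,\overline{d_\rho(|\xi|)}\int_{\R}e^{-i|x||\xi|}\phi(x)\,dx=:\overline{d_\rho(|\xi|)}\,\Phi(\xi).
\]
Similarly, by \eqref{eq:dft_inverse_flat_relation},
\[
\partial_\rho\dft_\rho^{-1}\psi(x)=\frac{1}{\sqrt{2\pi}}\int e^{i|x||\xi|}d_\rho(|\xi|)\psi(\xi)\,d\xi .
\]
All the estimates then reduce to properties of the "folded" transform $\Phi$ together with the bounds \eqref{eq:d_rho_bounds}. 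The key observation is that $\Phi(\xi)$ depends only on $|\xi|$. For $\xi>0$,
\[
\Phi(\xi)=\mathcal F(\phi\mathbf 1_{\R_+})(\xi)+\mathcal F(\refl(\phi\mathbf 1_{\R_-}))(\xi).
\]
Hence Plancherel on each half-line gives $\|\Phi\|_{L^2}\lesssim\|\phi\|_{L^2}$ and $\|\Phi\|_{L^\infty}\lesssim\|\phi\|_{L^1}$. Multiplying by the bounded factor $d_\rho$, with $\|d_\rho\|_{L^\infty}\le 1/(2q_-)$, yields the first bound in \eqref{eq:partial_q_dft} and the bound \eqref{eq:partial_q_dft_Linfty}. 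Likewise $\|\xi d_\rho\|_{L^\infty}\le \tfrac12$ yields \eqref{eq:partial_q_dft_weight}. For the second bound in \eqref{eq:partial_q_dft}, we use duality: the inverse-side operator is the adjoint of the map $\phi\mapsto\overline{d_\rho}\Phi$, up to the pairing conventions. Alternatively, we split $\int_{\xi>0}+\int_{\xi<0}$, recognize each piece as a flat inverse Fourier transform evaluated at $|x|$, and apply Plancherel on each half-line in $x$.

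The vanishing at the origin is immediate, since $d_\rho(0)=0$ from \eqref{eq:def_d_rho} and $\Phi(0)=\int\phi$ is finite for $\phi\in L^1$. More robustly, we can note that $\dft_\rho\phi(0)=0$ for every $\rho$ by \eqref{eq:dft_zero} and differentiate in $\rho$.

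The main obstacle is the $H^1$ estimate \eqref{eq:partial_q_dft_H1}. Here $\Phi$ is only even-symmetric in $\xi$, so it has a kink at $\xi=0$: its derivative $\partial_\xi\Phi(\xi)=\operatorname{sgn}(\xi)\cdot(\text{stuff})$ jumps there. We must therefore check that $\overline{d_\rho(|\xi|)}\Phi(\xi)$ lies in $H^1$ with no delta mass in its derivative at $0$. Continuity across $0$ holds because the function is even, and in fact it vanishes at $0$; so the distributional derivative is the piecewise one, and no jump term arises. On each half-line we then write
\[
\partial_\xi(\overline{d_\rho}\Phi)=\overline{d_\rho'}\,\Phi+\overline{d_\rho}\,\partial_\xi\Phi .
\]
The first term is controlled by $\|\partial_\xi d_\rho\|_{L^\infty}\|\Phi\|_{L^2}\lesssim\|\phi\|_{L^2}$. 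For the second, $\partial_\xi\Phi$ on $\xi>0$ equals $-i\,\mathcal F$ applied to $|x|\phi$ split into the two half-line pieces, so it is bounded in $L^2$ by $\|x\phi\|_{L^2}$. Combined with $\|\overline{d_\rho}\Phi\|_{L^2}\lesssim\|\phi\|_{L^2}$, this gives \eqref{eq:partial_q_dft_H1}. All constants depend only on $q_-$, hence are uniform in $\rho\in I$.
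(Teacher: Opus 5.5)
Your proposal is correct and follows essentially the same route as the paper. You differentiate the comparison formula of Lemma \ref{lem:relation}, so that only $\overline{d_\rho(|\xi|)}$ times the folded transform $\int e^{-i|x||\xi|}\phi\,dx$ remains; you then split $x$ and $\xi$ into half-lines, apply Plancherel, and invoke the bounds \eqref{eq:d_rho_bounds} together with $d_\rho(0)=0$. Your explicit check that the even function $\overline{d_\rho(|\xi|)}\Phi$ is continuous (indeed zero) at $\xi=0$, so its derivative carries no point mass, is a detail the paper leaves implicit, and it completes the $H^1$ bound without changing the argument.
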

\begin{proof}
Let
\[
    G_\phi(r):=\frac1{\sqrt{2\pi}}
       \int_{\mathbb R}e^{-i|x|r}\phi(x)\,dx,
    \qquad r\geq0.
\]
Lemma \ref{lem:relation} gives
\[
    \partial_\rho\dft_\rho \phi(\xi)
      =\overline{d_\rho(|\xi|)}\frac1{\sqrt{2\pi}}
       \int_{\mathbb R}e^{-i|x||\xi|}\phi(x)\,dx,.
\]
Splitting both $x$ and $\xi$ into half-lines and applying Plancherel yields
\[
    \left\| \int_{\mathbb R}e^{-i|x||\xi|}\phi(x)\,dx\right\|_{L^2_\xi}\lesssim\|\phi\|_{L^2},
    \qquad
    \left\|\partial_r\int_{\mathbb R}e^{-i|x|r}\phi(x)\,dx\bigg|_{r=|\xi|}\right\|_{L^2_\xi}\lesssim\|x\phi\|_{L^2}.
\]
Estimates \eqref{eq:partial_q_dft}--\eqref{eq:partial_q_dft_weight} now follow
from \eqref{eq:d_rho_bounds}; the inverse estimate is proved in exactly the same way from \eqref{eq:dft_inverse_flat_relation}.  Finally, $d_\rho(0)=0$,
which proves the assertion at the origin.
\end{proof}

Next, we establish the decomposition and estimates related to $\partial_\rho V_\rho(t)$ and $\partial_\rho V_\rho(t)^{-1}$. 
\begin{proposition}\label{prop:partial_V_est}
    For $\psi,\phi\in H^1(\R)$, $t>0$ and $0<\tilde q<\rho<q$, we have
    \begin{equation}
        \big\|\partial_\rho V_\rho(t)\phi(x)-d_\rho(|x|)\phi(x)-d_\rho(|x|)\phi(-x)\big\|_{L^\infty}\lesssim _{I} t^{-\frac{1}{4}}\|\phi\|_{H^1},
    \end{equation}
    and 
    \begin{equation}
        \big\|\partial_\rho V_\rho(t)^{-1}\psi(x)-\overline{d_\rho(|x|)}\psi(x)-\overline{d_\rho(|x|)}\psi(-x)\big\|_{L^\infty}\lesssim _{I} t^{-\frac{1}{4}}\|\psi\|_{H^1}.
    \end{equation}
    If $\phi(0)=0$, then 
    \begin{equation}
        \big\|\partial_\rho V_\rho(t)\phi\big\|_{H^1}\lesssim_{I}\|\phi\|_{H^1},
    \end{equation}
    while 
    \begin{equation}
        \big\|\partial_\rho V_\rho(t)^{-1}\psi\big\|_{H^1}\lesssim_{q,\tilde q} \,t^\frac{1}{2} |\psi(0)|+\|\psi\|_{H^1}.
    \end{equation}
\end{proposition}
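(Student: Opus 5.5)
The plan is to differentiate the explicit kernel representation of Lemma \ref{lem:V_relation} in $\rho$ and repeat, for the differentiated kernel, the stationary-phase argument behind Proposition \ref{prop:asy_V} from \cite{MMS}. Since $e_\rho(x,\xi)$ depends on $\rho$ only through $T_\rho(|\xi|)$ and $R_\rho(|\xi|)$, and $\partial_\rho T_\rho=\partial_\rho R_\rho=d_\rho$ by \eqref{eq:def_d_rho}, we get
\[
\partial_\rho e_\rho(x,\xi)=d_\rho(|\xi|)\,e^{i|x||\xi|},
\]
where we used that on $\{x\xi\ge0\}$ one has $e^{ix\xi}=e^{i|x||\xi|}$. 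Hence
\[
\partial_\rho V_\rho(t)\phi(x)=\sqrt{\tfrac{it}{2\pi}}\int e^{-it(x^2+\xi^2)}e^{it|x||\xi|}d_\rho(|\xi|)\phi(\xi)\,d\xi
=\sqrt{\tfrac{it}{2\pi}}\int e^{-it(|x|-|\xi|)^2}e^{-it|x||\xi|}\,\cdots .
\]
Completing the square gives phase $-it(\xi^2+x^2-|x||\xi|)$. More usefully, we split $\xi>0$ and $\xi<0$ and substitute. On each half-line the phase $x^2+\xi^2-|x||\xi|$ is not a perfect square; the exact phase in \cite{MMS}'s reflected piece is $-it(|x|-|\xi|)^2$ after including the factor $e^{-2it|x||\xi|}$ hidden in the normalization of $e(tx,\xi)$. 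I will follow their normalization: the reflected kernel has stationary point $|\xi|=|x|$, one for each sign of $\xi$. This produces the two leading terms $d_\rho(|x|)\phi(x)+d_\rho(|x|)\phi(-x)$, exactly as the $R$-term produces $R(|x|)\phi(-x)$ together with its transmitted counterpart in Proposition \ref{prop:asy_V}.

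The error bound follows the standard argument. We write $\phi(\xi)=\phi(\pm|x|)+(\phi(\xi)-\phi(\pm|x|))$ and use $|\phi(\xi)-\phi(\eta)|\le |\xi-\eta|^{1/2}\|\phi'\|_{L^2}$. The Gaussian-type integral $\sqrt{t}\int_{\R_\pm}e^{-it(|x|-|\xi|)^2}|\xi\mp x|^{1/2}\cdots$ then yields $t^{-1/4}$ after integration by parts away from the stationary point. Here $d_\rho$ is Lipschitz with $\|\partial_\xi d_\rho\|_{L^\infty}\lesssim_I1$ by \eqref{eq:d_rho_bounds}, so replacing $d_\rho(|\xi|)$ by $d_\rho(|x|)$ costs the same.

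A boundary term at $\xi=0$ also appears. It is of Fresnel type, $2\Fr(\sqrt{2t}|x|)\,d_\rho(0)\phi(0)$, and it vanishes because $d_\rho(0)=0$. This is why no Fresnel term appears in the statement, and I expect the main technical point to be verifying that this endpoint contribution is genuinely $O(t^{-1/4}\|\phi\|_{H^1})$ uniformly in $x$. To handle it, I would write $d_\rho(|\xi|)\phi(\xi)=d_\rho(|\xi|)(\phi(\xi)-\phi(0))+d_\rho(|\xi|)\phi(0)$. For the second piece I would use $|d_\rho(|\xi|)|\lesssim|\xi|$ near $0$ and integrate by parts in the truncated Fresnel integral. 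The inverse estimate is identical with the conjugated kernel, using the second formula of Lemma \ref{lem:V_relation}.

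For the $H^1$ bounds, I would write $\partial_\rho V_\rho(t)=\partial_\rho\bigl(M^{-1}D^{-1}U_\rho(t)\dft_\rho^{-1}\bigr)$. On the Fourier side this is $M(t)^{-1}D(t)^{-1}U_0(t)$ applied to the reflected piece $(\partial_\rho\dft_\rho^{-1})$, modulo $U_\rho$ commuting. Cleaner: $\partial_\rho V_\rho(t)=M^{-1}D^{-1}\bigl(\partial_\rho\dft_\rho^{-1}\bigr)e^{-it\xi^2}$. The $H^1$ norm in $x$ of $D^{-1}M^{-1}F$ is controlled by $\|F\|_{L^2}+t^{-1}\|\partial_xF\|_{L^2}+\|xF\|_{L^2}/(2t)$-type quantities. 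Using Lemma \ref{lem:dft_partial_q_est} (the inverse version of \eqref{eq:partial_q_dft_H1} and \eqref{eq:partial_q_dft_weight}), together with the identity \eqref{eq:dft_est_1} to commute $x$ past $e^{-it\xi^2}$, then gives $\|\phi\|_{H^1}$ when $\phi(0)=0$. This mirrors Proposition \ref{prop:homo_sobolev_est} in \cite{MMS}, with $R_\rho$ replaced by $d_\rho$, which has even better decay and vanishes at $0$.

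For $\partial_\rho V_\rho^{-1}$, the loss $t^{1/2}|\psi(0)|$ arises, exactly as in \eqref{eq:V_inverse_H1_bound}, from differentiating in $\xi$ the jump of $e^{-i|x||\xi|}$ at $x=0$ applied to $M(t)D(t)\psi$. For this I would copy the \cite{MMS} argument verbatim with $\overline{R_\rho}$ replaced by $\overline{d_\rho}$.
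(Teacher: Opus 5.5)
\textbf{The gap is in your forward $H^1$ bound.} Your $L^\infty$ estimates follow the paper's route, and so does your inverse $H^1$ bound, which reruns the half-line integrations by parts of \cite{MMS} with $\overline{R_\rho}$ replaced by $\overline{d_\rho}$. The ingredients match: $\partial_\rho e_\rho(y,\xi)=d_\rho(|\xi|)e^{i|y||\xi|}$, so only the reflected-type phase survives. The stationary point $|\xi|=|x|$ gives $d_\rho(|x|)(\phi(x)+\phi(-x))$. Every endpoint or Fresnel contribution carries $d_\rho(0)=0$ or $|d_\rho(\eta)|\lesssim\eta$. (Side remark: with $D(t)$ as defined, the kernel is $\sqrt{it/\pi}\,e^{-it(x^2+\xi^2)}e(2tx,\xi)$. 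The factor missing from the printed formula is therefore $e^{+it|x||\xi|}$, not $e^{-2it|x||\xi|}$. The phase $-it(|x|-|\xi|)^2$ you end up using is the correct one.)

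Your factorization $\partial_\rho V_\rho(t)=D(t)^{-1}M(t)^{-1}(\partial_\rho\dft_\rho^{-1})e^{-it\xi^2}$ is right. The claimed control of $\|D(t)^{-1}M(t)^{-1}F\|_{H^1}$ by $\|F\|_{L^2}+t^{-1}\|\partial_yF\|_{L^2}+(2t)^{-1}\|yF\|_{L^2}$ is false. Since $(D(t)^{-1}g)(x)=(2it)^{1/2}g(2tx)$, one has exactly
\[
\|\partial_x D(t)^{-1}M(t)^{-1}F\|_{L^2}=\|(2t\partial_y-iy)F\|_{L^2}.
\]
Take $F=(\partial_\rho\dft_\rho^{-1})(e^{-it\xi^2}\phi)$. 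Then $2t\|\partial_yF\|_{L^2}$ and $\|yF\|_{L^2}$ are each of order $t$: both contain $2t\|\xi d_\rho(|\xi|)\phi\|_{L^2}$, via $|\xi d_\rho(\xi)|\le\frac12$ and $\partial_\xi e^{-it\xi^2}=-2it\xi e^{-it\xi^2}$. Estimating them separately with Lemma \ref{lem:dft_partial_q_est} and \eqref{eq:dft_est_1} therefore gives only an $O(t)$ bound. Your argument closes only because of the incorrect weights.

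What is missing is the exact cancellation inside the Galilean operator $J(t)=y+2it\partial_y$:
\begin{itemize}
\item Set $\Phi(\eta):=d_\rho(\eta)\bigl(\phi(\eta)+\phi(-\eta)\bigr)$ for $\eta>0$. Then $F(y)=g(|y|)$ with $g=U_0(t)\mathcal F^{-1}(\mathbf 1_{(0,\infty)}\Phi)$.
\item Since $J(t)U_0(t)=U_0(t)x$, you get $(2t\partial_y-iy)F(y)=\operatorname{sgn}(y)\,\bigl[U_0(t)\mathcal F^{-1}\bigl(\partial_\eta(\mathbf 1_{(0,\infty)}\Phi)\bigr)\bigr](|y|)$. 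The two $2it\eta$ terms cancel exactly.
\item There is no boundary term at $\eta=0$, because $\Phi(0)=0$ follows from $d_\rho(0)=0$.
\end{itemize}
Hence $\|\partial_x\,\partial_\rho V_\rho(t)\phi\|_{L^2}\lesssim\|\Phi'\|_{L^2(\R_+)}\lesssim_I\|\phi\|_{H^1}$, and this holds even without assuming $\phi(0)=0$. This cancellation is what the integrations by parts of \cite{MMS} encode, which the paper says carry over unchanged. You should either use it explicitly or, as the paper does, appeal to those integrations by parts.
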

\begin{proof}
Differentiate the kernel formulas used in the proofs of Proposition \ref{prop:asy_V} and Proposition \ref{prop:homo_sobolev_est}. In the notation of \cite[(3.7)--(3.9), (3.20)--(3.22)]{MMS}, the parameter occurs only through
$T_\rho$ and $R_\rho$; differentiation therefore replaces either coefficient by $d_\rho$. The $L^\infty$ argument is unchanged because $d_\rho$ and $d_\rho'$ obey \eqref{eq:d_rho_bounds}. The integrations by parts used for the
$H^1$ estimates are also unchanged. The boundary terms are controlled by $d_\rho(0)=0$, and the possible trace term for the inverse is estimated as in Proposition \ref{prop:homo_sobolev_est}. All constants are uniform on $I$.
\end{proof}

\medskip

\paragraph{\bf The differentiated profile} Since $w_\rho(t,0)=0$ for every $\rho$, one has
\begin{equation}\label{eq:z_zero_frequency}
    z_\rho(t,0)=0.
\end{equation}
\begin{lemma}\label{lem:quant_z_v_small} For $\varepsilon$ sufficiently small, 
\begin{equation}\label{eq:quant_z_v_small}
    \sup_{\rho\in I}\sup_{0\leq t\leq 1}
    \bigl(\|z_\rho(t)\|_{H^1}+\|v_\rho(t)\|_{L^2}\bigr)
    \lesssim_{I,b}\alpha.
\end{equation}
Consequently,
\begin{equation}\label{eq:partial_small_time_remainder}
\begin{split}
    \sup_{\rho\in I}
    \left\|\partial_\rho\left\{
      B_\rho^*\int_0^1
      \overrightarrow{\dft_\rho U_\rho(-t)
       [(1+b)|u_\rho|^2u_\rho]}
      \,dt\right\}\right\|_{L^\infty\times L^\infty}
    \lesssim_{I,b}\alpha^3.
\end{split}
\end{equation}
\end{lemma}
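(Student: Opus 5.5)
The plan is to differentiate the nonlinear equation in $\rho$ and estimate $v_\rho=\partial_\rho u_\rho$ on the short interval $[0,1]$ by a Duhamel–Gr\"onwall argument. The bound is then transferred to $z_\rho=\partial_\rho w_\rho$ through the product rule applied to $w_\rho=\dft_\rho U_\rho(-t)u_\rho$.

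\textbf{Step 1: the equation for $v_\rho$.} The initial datum $\varepsilon\psi$ does not depend on $\rho$, so $v_\rho(0)=0$. Since $\partial_\rho H_\rho=\delta$, differentiating \eqref{main} gives
\[
i\partial_t v_\rho=H_\rho v_\rho+\delta\,u_\rho+\lambda(1+b)\bigl(2|u_\rho|^2v_\rho+u_\rho^2\overline{v_\rho}\bigr).
\]
The source $\delta u_\rho$ is singular, so I will not treat it directly in $L^2$. Instead I write the $\rho$-derivative of the linear flow in distorted Fourier variables,
\[
\partial_\rho U_\rho(t)=\partial_\rho\bigl(\dft_\rho^{-1}e^{-it\xi^2}\dft_\rho\bigr)=(\partial_\rho\dft_\rho^{-1})e^{-it\xi^2}\dft_\rho+\dft_\rho^{-1}e^{-it\xi^2}\partial_\rho\dft_\rho .
\]
By \eqref{eq:partial_q_dft}, this operator is bounded on $L^2$ uniformly in $t$ and $\rho\in I$. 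Differentiating the Duhamel formula
\[
u_\rho(t)=U_\rho(t)\varepsilon\psi-i\lambda\int_0^tU_\rho(t-s)\bigl[(1+b)|u_\rho|^2u_\rho\bigr](s)\,ds
\]
then gives
\[
\|v_\rho(t)\|_{L^2}\lesssim_I\|\varepsilon\psi\|_{L^2}+\int_0^t\|u_\rho\|_{L^\infty}^2\bigl(\|u_\rho\|_{L^2}+\|v_\rho\|_{L^2}\bigr)\,ds .
\]
The small-time bound \eqref{eq:small_time_boot}, in its quantified form with $\varepsilon_0$ replaced by $\alpha$, makes $\|u_\rho\|_{L^\infty}\lesssim\alpha$ uniformly in $\rho\in I$. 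Gr\"onwall on $[0,1]$ then gives $\|v_\rho(t)\|_{L^2}\lesssim\alpha$. To make this rigorous I would run the argument on difference quotients, as the paper indicates.

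\textbf{Step 2: the bound on $z_\rho$.} The product rule gives
\[
z_\rho=(\partial_\rho\dft_\rho)U_\rho(-t)u_\rho+\dft_\rho(\partial_\rho U_\rho(-t))u_\rho+\dft_\rho U_\rho(-t)v_\rho .
\]
The $L^2$ part follows from Step 1 and Lemma \ref{lem:dft_partial_q_est}. For the $H^1_\xi$ part, the cleaner route is to differentiate in $\rho$ the Duhamel formula for $w_\rho$ itself:
\[
w_\rho(t)=\dft_\rho\varepsilon\psi-i\lambda\int_0^te^{is\xi^2}\dft_\rho\bigl[(1+b)|u_\rho|^2u_\rho\bigr]\,ds .
\]
\begin{itemize}
\item The term $\partial_\rho\dft_\rho(\varepsilon\psi)$ is controlled by \eqref{eq:partial_q_dft_H1} via $\|\langle x\rangle\varepsilon\psi\|_{L^2}\le\alpha$.
\item The term where $\partial_\rho$ hits $\dft_\rho$ inside the integral is controlled by \eqref{eq:partial_q_dft_H1}. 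This requires $\|\langle x\rangle u_\rho\|_{L^2}\lesssim\alpha$ on $[0,1]$, which is \eqref{eq:xu_bound_small}. On $[0,1]$, $\partial_\xi$ landing on $e^{is\xi^2}$ costs only a factor $s\xi\lesssim|\xi|$, and this is handled by \eqref{eq:partial_q_dft_weight}.
\item The term where $\partial_\rho$ hits the nonlinearity is $\dft_\rho[(1+b)(2|u|^2v+u^2\bar v)]$. Its $H^1_\xi$ norm needs $\|\langle x\rangle|u|^2v\|_{L^2}$ and $\|\partial_x(|u|^2v)\|_{L^2}$, hence weighted and $H^1$ control of $v_\rho$.
\end{itemize}

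\textbf{Main obstacle.} The difficulty is the last item: $\partial_xv_\rho$ may jump at the origin because of the $\delta$-forcing. I would avoid regularity of $v$ by estimating the whole expression in $\xi$-space. From \eqref{eq:small_time_w} one has $\|w_\rho\|_{H^1}\lesssim\alpha$. I would then run a coupled Gr\"onwall for
\[
Y(t):=\|z_\rho(t)\|_{H^1}+\|v_\rho(t)\|_{L^2},
\]
writing $v_\rho$ back in terms of $z_\rho$ and $w_\rho$ via the identity $u=\dft_\rho^{-1}e^{-it\xi^2}w$. Weighted and derivative norms of $v$ are then expressed through $\|z\|_{H^1}$ plus $\partial_\rho\dft_\rho^{-1}$-terms, using \eqref{eq:dft_inverse_weighted}, \eqref{eq:dft_est_new} and their $\rho$-derivative analogues from Lemma \ref{lem:dft_partial_q_est}. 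Here $z_\rho(t,0)=0$ from \eqref{eq:z_zero_frequency} is what permits \eqref{eq:dft_inverse_weighted}. The nonlinearity contributes $\alpha^2Y$, which closes for small $\varepsilon$ and gives \eqref{eq:quant_z_v_small}.

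\textbf{Step 3: the consequence \eqref{eq:partial_small_time_remainder}.} I would distribute $\partial_\rho$ over $B_\rho^*$, $\dft_\rho$, $U_\rho(-t)$ and the cubic term.
\begin{itemize}
\item The $\partial_\rho B^*_\rho$ term is handled by \eqref{eq:partial_B_bound} together with the $L^1\to L^\infty$ bound \eqref{eq:dft_L1_infty}. It is bounded by $\|(1+b)|u|^2u\|_{L^1}\lesssim\|u\|_{L^\infty}\|u\|_{L^2}^2\lesssim\alpha^3$.
\item The $\partial_\rho\dft_\rho$ term is bounded by \eqref{eq:partial_q_dft_Linfty} in the same way.
\item The term with $\partial_\rho$ on the cubic nonlinearity is bounded in $L^1_x$ by $\|u\|_{L^\infty}\|u\|_{L^2}\|v\|_{L^2}\lesssim\alpha^3$.
\item The term with $\partial_\rho U_\rho(-t)$ is the least direct. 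I would rewrite it using $e^{it\xi^2}\dft_\rho$, noting that $\dft_\rho U_\rho(-t)=e^{it\xi^2}\dft_\rho$ so that $\partial_\rho$ falls only on $\dft_\rho$, which has already been handled.
\end{itemize}
All constants are uniform in $\rho\in I$.
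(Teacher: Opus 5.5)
Your Step 1 (the $L^2$ Gr\"onwall for $v_\rho$, using that $\partial_\rho U_\rho(t)$ is bounded on $L^2$) and your Step 3 are sound. In fact \eqref{eq:partial_small_time_remainder} uses only $\|v_\rho\|_{L^2}\lesssim\alpha$, so it already follows from Step 1.

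The gap is in the $H^1_\xi$ bound for $z_\rho$. The problematic term is the one where $\partial_\xi$ lands on $e^{is\xi^2}$ in the $v$-dependent Duhamel integral,
\[
\mathcal J_\rho(t)=\int_0^t 2is\xi\, e^{is\xi^2}\dft_\rho\bigl[(1+b)(2|u_\rho|^2v_\rho+u_\rho^2\overline{v_\rho})\bigr](s)\,ds .
\]
You plan to close a Gr\"onwall for $Y=\|z_\rho\|_{H^1}+\|v_\rho\|_{L^2}$ by expressing the derivative and weighted norms of $v_\rho$ through $\|z_\rho\|_{H^1}$, via \eqref{eq:dft_est_new} and \eqref{eq:dft_inverse_weighted}. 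Those estimates do not give this:
\begin{itemize}
\item[(i)] \eqref{eq:dft_est_new} yields $\|\partial_x\dft_\rho^{-1}(e^{-it\xi^2}z_\rho)\|_{L^2}\le\|\xi z_\rho\|_{L^2}$. This is a weight in $\xi$, and $\|z_\rho\|_{H^1_\xi}$ does not control it.
\item[(ii)] \eqref{eq:dft_inverse_weighted} applied to $e^{-it\xi^2}z_\rho$ only gives $\|x\dft_\rho^{-1}(e^{-it\xi^2}z_\rho)\|_{L^2}\lesssim\|z_\rho\|_{H^1}+t\|\xi z_\rho\|_{L^2}$.
\end{itemize}
So $Y$ does not control the right-hand side, and the scheme does not close. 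The jump of $\partial_xv_\rho$ at the origin, which you single out as the obstacle, is harmless at the $H^1$ level.

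There is a second, independent problem, hidden because your third bullet drops the coefficient. The expression $\partial_x\bigl(b(2|u|^2v+u^2\bar v)\bigr)$ contains $b'(2|u|^2v+u^2\bar v)$. For admissible $b$ one only has $b'\in L^1$, so this term is not in $L^2_x$. Hence no fixed-time bound of $\|\xi\dft_\rho[\cdots]\|_{L^2_\xi}$ via \eqref{eq:dft_est_3} is available.

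The paper splits $\mathcal J_\rho$ according to $1+b$. For the localized part it uses the local smoothing estimate, Lemma \ref{lem:dft_local_smoothing}, together with the low/high-frequency split of Lemma \ref{lem:H1_forcing_a}. Local smoothing works in $L^1_xL^2_s$ norms, so it absorbs $b'\in L^1$. After integrating by parts in the flat high-frequency piece, the factor $s\partial_x$ is converted using the factorization \eqref{eq:v_factor}:
\[
s\partial_xv_\rho=\tfrac{ix}{2}v_\rho+\tfrac12M(s)D(s)\partial_x\bigl[(\partial_\rho V_\rho)w_\rho+V_\rho z_\rho\bigr].
\]
Combined with Proposition \ref{prop:partial_V_est} and $w_\rho(s,0)=z_\rho(s,0)=0$, this trades $s\partial_x v_\rho$ for $\|w_\rho\|_{H^1}+\|z_\rho\|_{H^1}$ plus a spatial weight. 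The same holds for $u_\rho$. That spatial weight is absorbed by $xb\in L^2$. For the terms where $\partial_\xi$ misses the phase, the paper puts the weight on $|u_\rho|^2$ via \eqref{eq:x_f_squared}, so no weighted bound on $v_\rho$ is ever needed.

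Alternatively, you could repair your scheme by adding $\|\xi z_\rho\|_{L^2}$ (energy-level control of $v_\rho$) to $Y$. The $b'$ term would still require local smoothing.
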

\begin{proof}
We use throughout the uniform small-time estimates proved in Section \ref{sec:direct}
\begin{equation}\label{eq:uniform_small_time_direct}
    \sup_{\rho\in I}\sup_{0\leq t\leq1}
    \left(\|u_\rho(t)\|_{H^1}+\|xu_\rho(t)\|_{L^2} +\|w_\rho(t)\|_{H^1}\right)\lesssim_{I,b}\alpha.
\end{equation}
Since
\[
    u_\rho(t)= \dft_\rho^{-1} \bigl(e^{-it\xi^2}w_\rho(t)\bigr),
\]
differentiation in $\rho$ gives
\begin{equation}\label{eq:v_transform_identity}
    v_\rho(t)=
    (\partial_\rho\dft_\rho^{-1})\bigl(e^{-it\xi^2}w_\rho(t)\bigr)+\dft_\rho^{-1}\bigl(e^{-it\xi^2}z_\rho(t)\bigr).
\end{equation}
Consequently,
\begin{equation}\label{eq:v_L2_by_z}
    \|v_\rho(t)\|_{L^2}\lesssim_I\alpha+\|z_\rho(t)\|_{L^2}.
\end{equation}
Differentiating the profile Duhamel formula yields
\begin{equation}\label{eq:z_small_Duhamel}
\begin{split}
    z_\rho(t)=\varepsilon(\partial_\rho\dft_\rho)\psi&-i\lambda
    \int_0^t e^{is\xi^2} (\partial_\rho\dft_\rho (1+b)|u_\rho|^2 u_\rho(s) ds\\
    &\,\,\,\,\,\,-i\lambda \int_0^t e^{is\xi^2} \dft_\rho (1+b)\big( 2|u_\rho|^2 v_\rho+ u_\rho^2 \overline {v_\rho}\big)(s)ds.
\end{split}
\end{equation}
By \eqref{eq:uniform_small_time_direct},
\[
    \|(1+b)|u_\rho|^2u_\rho(s)\|_{L^2} \lesssim_b\alpha^3,\qquad
    \|(1+b)\left( 2|u_\rho|^2v_\rho+u_\rho^2\overline{v_\rho}\right)(s)\|_{L^2}\lesssim_b\alpha^2\|v_\rho(s)\|_{L^2}.
\]
Therefore, Lemma \ref{lem:dft_partial_q_est} and
\eqref{eq:z_small_Duhamel} give
\[
\begin{split}
    \|z_\rho(t)\|_{L^2} \lesssim_{I,b}{}\alpha+\alpha^3 +\alpha^2\int_0^t\|v_\rho(s)\|_{L^2}\,ds.
\end{split}
\]
Using \eqref{eq:v_L2_by_z} and Gronwall's inequality, we obtain
\begin{equation}\label{eq:z_v_small_L2}
    \sup_{\rho\in I}\sup_{0\leq t\leq1} \left(  \|z_\rho(t)\|_{L^2}  +\|v_\rho(t)\|_{L^2} \right) \lesssim_{I,b}\alpha.
\end{equation}
We next estimate $\partial_\xi z_\rho$. We first record that, for
$f\in H^1$ with $xf\in L^2$,
\begin{equation}\label{eq:x_f_squared}
    \|x|f|^2\|_{L^\infty}
    \lesssim
    \|f\|_{L^2}^2
    +\|xf\|_{L^2}\|\partial_xf\|_{L^2}.
\end{equation}
It follows from \eqref{eq:uniform_small_time_direct},
\eqref{eq:z_v_small_L2}, and \eqref{eq:x_f_squared} that
\begin{equation}\label{eq:G0_G1_weighted_small}
    \|\langle x\rangle (1+b)|u_\rho|^2u_\rho(s)\|_{L^2}+
    \|\langle x\rangle(1+b)\left( 2|u_\rho|^2v_\rho+u_\rho^2\overline {v_\rho}\right(s)\|_{L^2} \lesssim_{I,b}\alpha^3,\qquad 0\leq s\leq1.
\end{equation}
The parameter-dependent distorted Fourier estimates imply
\begin{equation}\label{eq:parameter_transform_small_H1}
\begin{split}
    \left\|
        e^{is\xi^2}
        (\partial_\rho\dft_\rho)f
    \right\|_{H^1_\xi}
    \lesssim_I
    \|\langle x\rangle f\|_{L^2}
    +s\|f\|_{L^2},
    \qquad 0\leq s\leq1.
\end{split}
\end{equation}
Indeed, when $\partial_\xi$ hits the phase, the resulting term is
controlled by
\[
    \|\xi(\partial_\rho\dft_\rho)f\|_{L^2}
    \lesssim_I\|f\|_{L^2},
\]
whereas the remaining term is bounded by
\[
    \|\partial_\xi(\partial_\rho\dft_\rho)f\|_{L^2}
    \lesssim_I\|\langle x\rangle f\|_{L^2}.
\]
Thus, by \eqref{eq:G0_G1_weighted_small}, every term in
$\partial_\xi z_\rho$ is bounded by $C_{I,b}\alpha^3$, except when $\partial_\xi$ falls on the oscillation
\begin{equation}\label{eq:J_rho_definition}
    \mathcal J_\rho(t):=\int_0^t  2is\xi e^{is\xi^2} \dft_\rho (1+b)\left( 2|u_\rho|^2 v_\rho+u_\rho^2 \overline{v_\rho}\right)(s)ds.
\end{equation}
The local smoothing estimate of Lemma
\ref{lem:dft_local_smoothing} is valid, with the same constant, on every time interval; hence it may be applied on $[0,t]$. More specifically, we split \eqref{eq:J_rho_definition} according to $1+b$.  For the localized part, repeating the low-/high-frequency decomposition in the
proof of Lemma \ref{lem:H1_forcing_partial}, now on $[0,t]$, and using the small-time bounds for $u_{\rho,b}$ and $v_{\rho,b}$ gives
\[ \left\|
    \int_0^t 2is\xi e^{is\xi^2}\dft_\rho\left[  b\left( 2|u_{\rho,b}|^2v_{\rho,b} +u_{\rho,b}^2\overline{v_{\rho,b}}  \right) \right](s)\,ds
\right\|_{L^2_\xi}
\lesssim_{q,\tilde q,b}\varepsilon^2\|\psi\|_\Sigma^2
\left( \varepsilon\|\psi\|_\Sigma +\sup_{0\le s\le t}\|z_{\rho,b}(s)\|_{H^1}\right).
\]
Indeed, after integration by parts in the flat high-frequency term, the factorization identities for $s\partial_xu_{\rho,b}$ and $s\partial_xv_{\rho,b}$ reduce the resulting expressions to precisely the terms estimated in Lemma \ref{lem:H1_forcing_partial}.  Since $0\le s\le1$, no time growth occurs.

For the homogeneous part, Lemma \ref{lem:dft_est} gives
\[
\left\| \int_0^t 2is\xi e^{is\xi^2}\dft_\rho\left( 2|u_{\rho,b}|^2v_{\rho,b} +u_{\rho,b}^2\overline{v_{\rho,b}}\right)(s)\,ds
\right\|_{L^2_\xi}
\lesssim_{q,\tilde q,b}
\varepsilon^2\|\psi\|_\Sigma^2
\left(\varepsilon\|\psi\|_\Sigma+ \sup_{0\le s\le t}\|z_{\rho,b}(s)\|_{H^1}
\right).
\]
Consequently,
\begin{equation}\label{eq:J_rho_bound}
    \|\mathcal J_\rho(t)\|_{L^2_\xi}
    \lesssim_{q,\tilde q,b}\varepsilon^2\|\psi\|_\Sigma^2
    \left(\varepsilon\|\psi\|_\Sigma + \sup_{0\le s\le t}\|z_{\rho,b}(s)\|_{H^1} \right),  \qquad 0\le t\le1.
\end{equation}
Returning to \eqref{eq:z_small_Duhamel}, we conclude that
\[
    Z(t):=\sup_{\rho\in I}\sup_{0\leq s\leq t}
    \|z_\rho(s)\|_{H^1},\qquad Z(t) \leq C_{I,b}\alpha +C_{I,b}\alpha^2Z(t).
\]
For $\alpha$ sufficiently small, the last term is absorbed, which proves \eqref{eq:quant_z_v_small}. With uniform boundedness on $B_\rho^*$ and $\partial_\rho B_\rho^*$ from \eqref{eq:d_rho_bounds}, \eqref{eq:partial_small_time_remainder} is trivial then $\partial_\rho$ hits $B_\rho^*$. Otherwise, we bound using \eqref{eq:partial_q_dft} and \eqref{eq:quant_z_v_small}
\begin{align*}
    &\int_0^1 \left\|\left(\partial_\rho\dft_\rho \right) (1+b)|u_{\rho,b}(t)|^2 u_{\rho,b}(t)\right\|_{L^2} dt+C_b\int_0^1 \big\|u_{\rho,b}(t)\big\|_{L^\infty}^2 \big\|v_{\rho,b}(t)\big\|_{L^2} dt\lesssim_{q,\tilde q,b} \varepsilon^3 \|\psi\|_\Sigma^3,
\end{align*}
and analogously for the second coordinate. Therefore, we have the uniform estimate \eqref{eq:partial_small_time_remainder}. 
\end{proof}

\medskip

Fix once and for all
\begin{equation}\label{eq:choice_gamma}
    0<\gamma<\frac14-3\beta.
\end{equation}
The direct estimates, uniformly for $\rho\in I$, give
\begin{equation}\label{eq:uniform_direct_bounds_rho}
    \|w_\rho(t)\|_{L^\infty} +t^{-\beta}\|w_\rho(t)\|_{H^1}+t^{\frac{1}{2}}\|u_\rho(t)\|_{L^\infty} \lesssim_{I,b}\alpha, \qquad t\geq1.
\end{equation}
The differentiated localized forcing is
\begin{equation}\label{eq:expression_partial_N}
\begin{split}
    \partial_\rho\mathcal N_{\rho,b}(t)=e^{it\xi^2}(\partial_\rho\dft_\rho) [b|u_\rho|^2u_\rho] +e^{it\xi^2}\dft_\rho[b(2|u_\rho|^2v_\rho+u_\rho^2\overline{v_\rho})].
\end{split}
\end{equation}

\begin{lemma}\label{lem:H1_forcing_partial}
Suppose on $[1,T]$ that
\begin{equation}\label{eq:z_bootstrap}
    \|z_\rho(t)\|_{L^\infty}
      +t^{-\beta}\|z_\rho(t)\|_{H^1}
    \leq K\alpha t^\gamma
\end{equation}
uniformly for $\rho\in I$.  Then
\begin{align}
    \|v_\rho(t)\|_{L^\infty}
      &\lesssim_I K\alpha t^{-\frac{1}{2}+\gamma},
      \label{eq:v_large_parameter}\\
    \|\partial_\rho\mathcal N_{\rho,b}(t)\|_{L^\infty}
      &\lesssim_{I,b}K^3\alpha^3t^{-\frac{3}{2}+\gamma},
      \label{eq:partial_N_Linfty}\\
    \left\|\int_1^t\partial_\rho\mathcal N_{\rho,b}(s)\,ds
       \right\|_{H^1}
      &\lesssim_{I,b}K^3\alpha^3t^{\beta+\gamma}.
      \label{eq:partial_N_H1}
\end{align}
\end{lemma}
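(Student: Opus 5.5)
We prove the three estimates in the order stated, since each feeds the next. For \eqref{eq:v_large_parameter}, differentiate the factorization $u_\rho=M(t)D(t)V_\rho(t)w_\rho$ in $\rho$:
\[
v_\rho(t)=M(t)D(t)\bigl[(\partial_\rho V_\rho(t))w_\rho(t)+V_\rho(t)z_\rho(t)\bigr].
\]
The factor $M(t)D(t)$ contributes $t^{-\frac12}$ in $L^\infty$. For the first bracket term, Proposition~\ref{prop:partial_V_est} together with the bound $\|d_\rho\|_{L^\infty}\lesssim_I 1$ gives
\[
\|(\partial_\rho V_\rho)w_\rho\|_{L^\infty}\lesssim \|w_\rho\|_{L^\infty}+t^{-\frac14}\|w_\rho\|_{H^1}\lesssim\alpha,
\]
using \eqref{eq:uniform_direct_bounds_rho} and $\beta<\frac14$. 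For the second, Corollary~\ref{cor:estimate_V} gives
\[
\|V_\rho z_\rho\|_{L^\infty}\lesssim \|z_\rho\|_{L^\infty}+t^{-\frac14}\|z_\rho\|_{H^1}\lesssim K\alpha t^{\gamma}
\]
by \eqref{eq:z_bootstrap}. Since $K\ge1$ and $t\geq1$, this yields $\|v_\rho\|_{L^\infty}\lesssim K\alpha t^{-\frac12+\gamma}$.

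For \eqref{eq:partial_N_Linfty}, use the decomposition \eqref{eq:expression_partial_N}. The first term is controlled by \eqref{eq:partial_q_dft_Linfty}:
\[
\|(\partial_\rho\dft_\rho)[b|u_\rho|^2u_\rho]\|_{L^\infty}\lesssim\|b\|_{L^1}\|u_\rho\|_{L^\infty}^3\lesssim\alpha^3t^{-\frac32}.
\]
The second term is controlled by \eqref{eq:dft_L1_infty}, the bound \eqref{eq:uniform_direct_bounds_rho} on $u_\rho$, and \eqref{eq:v_large_parameter}:
\[
\|b\|_{L^1}\|u_\rho\|_{L^\infty}^2\|v_\rho\|_{L^\infty}\lesssim K\alpha^3t^{-\frac32+\gamma}.
\]
Both are dominated by $K^3\alpha^3t^{-\frac32+\gamma}$.

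The main work is \eqref{eq:partial_N_H1}, which we handle by repeating the proof of Lemma~\ref{lem:H1_forcing_a} for each term of \eqref{eq:expression_partial_N}. When $\partial_\xi$ does not hit $e^{is\xi^2}$, the term is bounded by \eqref{eq:partial_q_dft_H1}, resp.\ \eqref{eq:dft_est_2}, through $\|\langle x\rangle b\cdots\|_{L^2}$. Since $b,xb\in L^2$, this is at most $\int_1^t(\alpha^3s^{-\frac32}+K\alpha^3s^{-\frac32+\gamma})\,ds\lesssim K\alpha^3$.

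When $\partial_\xi$ hits the phase, producing $2is\xi$, we treat the two terms separately. For the $(\partial_\rho\dft_\rho)$ term, $\partial_\rho\dft_\rho$ is given by the explicit kernel $\overline{d_\rho(|\xi|)}e^{-i|x||\xi|}$. Since $|\xi d_\rho|\le\frac12$, the multiplier $\xi\overline{d_\rho}$ satisfies \eqref{eq:bounded_q}, so Lemma~\ref{lem:dft_local_smoothing} (with a low/high frequency split to supply the $\sqrt{|\xi|}$ factor) bounds this term by $\|s\,b|u_\rho|^3\|_{L^1_xL^2_s}\lesssim\alpha^3\log t$, as in \eqref{eq:Na_low}. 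For the $\dft_\rho$ term with $F=b(2|u_\rho|^2v_\rho+u_\rho^2\overline{v_\rho})$, the low-frequency piece and the reflected tail \eqref{eq:Na_high_tail} both give
\[
\|sF\|_{L^1_xL^2_s}\lesssim\Bigl(\int_1^t s^2\alpha^4s^{-2}K^2\alpha^2s^{-1+2\gamma}\,ds\Bigr)^{\frac12}\lesssim K\alpha^3t^{\gamma}.
\]

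The hard step is the flat high-frequency piece, which requires $\|s\partial_xF\|_{L^1_xL^2_s}$. Terms where $\partial_x$ falls on $b$ or on $u_\rho$ go exactly as before: use \eqref{eq:partial_x_u}, and put $v_\rho$ in $L^\infty$ or via $\|v_\rho\|_{L^2}\lesssim\alpha+\|z_\rho\|_{L^2}$. When $\partial_x$ falls on $v_\rho$, we differentiate the factorization to get the analogue of \eqref{eq:partial_x_u}:
\[
s\partial_xv_\rho=\frac{ix}{2}v_\rho+\frac12M D\,\partial_x\bigl[(\partial_\rho V_\rho)w_\rho+V_\rho z_\rho\bigr].
\]
The $xv_\rho$ part uses $xb\in L^2$ and the $L^\infty$ bound \eqref{eq:v_large_parameter}, giving $\|u_\rho\|_{L^4_sL^\infty_x}^2\|xb\|_{L^2}\sup_s\|v_\rho\|_{L^\infty}\lesssim K\alpha^3t^{\gamma}$. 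For the bracket, $D(s)$ is an $L^2$-isometry after rescaling, so we use Proposition~\ref{prop:partial_V_est} (applicable since $w_\rho(0)=0$) and \eqref{eq:V_H1_bound} (applicable since $z_\rho(0)=0$ by \eqref{eq:z_zero_frequency}) to get
\[
\|\partial_x[\cdots]\|_{L^2}\lesssim\|w_\rho\|_{H^1}+\|z_\rho\|_{H^1}\lesssim K\alpha s^{\beta+\gamma}.
\]
By Hölder with $\|b\|_{L^2}\|u_\rho\|_{L^4_sL^\infty_x}^2$ this yields $K\alpha^3t^{\beta+\gamma}$. Summing all contributions gives \eqref{eq:partial_N_H1}, with the $K^3$ absorbing the lower powers of $K$. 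The point to check is that the vanishing $z_\rho(t,0)=0$ is available for every $t$, so that the $H^1$ bound for $V_\rho$ applies without the $t^{\frac12}$ trace loss.
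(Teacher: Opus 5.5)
Your proposal is correct and follows essentially the same route as the paper: the differentiated factorization $v_\rho=M(t)D(t)\bigl[(\partial_\rho V_\rho)w_\rho+V_\rho z_\rho\bigr]$ with Proposition~\ref{prop:partial_V_est}, the $L^1\to L^\infty$ bounds for $\dft_\rho$ and $\partial_\rho\dft_\rho$, and a rerun of Lemma~\ref{lem:H1_forcing_a} via local smoothing, the low/high-frequency split, integration by parts in $x$, and the factorization identities for $s\partial_xu_\rho$ and $s\partial_xv_\rho$. The only slip is a norm mismatch in the $\frac{ix}{2}v_\rho$ term: pairing with $\|xb\|_{L^2}$ forces one factor into $L^2_x$, so bound it by $\|xb\|_{L^2}\|u_\rho\|_{L^\infty_sL^2_x}\|u_\rho\|_{L^4_sL^\infty_x}\|v_\rho\|_{L^4_sL^\infty_x}\lesssim K\alpha^3$ (using mass conservation and $\gamma<\frac14$), rather than putting all three factors in $L^\infty_x$.
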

\begin{proof}
The factorization
\begin{equation}\label{eq:v_factor}
     v_\rho=M(t)D(t)\big[(\partial_\rho V_\rho(t))w_\rho  +V_\rho(t)z_\rho\big]
\end{equation}
together with Proposition \ref{prop:partial_V_est} gives
\eqref{eq:v_large_parameter}.  Formula \eqref{eq:expression_partial_N} and the
$L^1\to L^\infty$ estimates for $\dft_\rho$ and
$\partial_\rho\dft_\rho$ then give \eqref{eq:partial_N_Linfty}.

For \eqref{eq:partial_N_H1}, follow the proof of Lemma
\ref{lem:H1_forcing_a}.  Terms for which $\partial_\xi$ does not hit $e^{it\xi^2}$ are integrable by \eqref{eq:partial_q_dft_H1} and \eqref{eq:dft_est_2}.  When it hits the oscillation, local smoothing bounds the first term in
\eqref{eq:expression_partial_N} by
\[
    \|tb|u_\rho|^2u_\rho\|_{L_x^1L_t^2([1,T])}\lesssim_b\alpha^3(\log T)^{\frac{1}{2}},
\]
and the low-frequency and reflected pieces containing $v_\rho$ by
\[
    \|tb(2|u_\rho|^2v_\rho+u_\rho^2\overline{v_\rho})  \|_{L_x^1L_t^2([1,T])} \lesssim_{I,b} K^3\alpha^3T^\gamma(\log T)^{\frac{1}{2}}.
\]
For the remaining flat high-frequency term, integrate by parts in $x$. With
\begin{equation*}
    \begin{split}
    \partial_x(2|u|^2v+u^2\bar v) =4\operatorname{Re}(\bar u\,u_x)v+2|u|^2v_x+2uu_x\bar v+u^2\overline{v_x},
\end{split}
\end{equation*}
the factorization identities used in the proof of Lemma
\ref{lem:quant_z_v_small} reduce every term to the same estimates as in Lemma \ref{lem:H1_forcing_a}, with one factor $u_\rho$ replaced by $v_\rho$ or one factor $V_\rho w_\rho$ replaced by $V_\rho z_\rho+(\partial_\rho V_\rho)w_\rho$. Proposition \ref{prop:partial_V_est} and \eqref{eq:z_bootstrap} therefore give $C_{I,b}K^3\alpha^3T^{\beta+\gamma}$. Since logarithmic factors are absorbed by $T^\beta$, this proves \eqref{eq:partial_N_H1}.
\end{proof}

\begin{proposition}\label{prop:quant_z_big}
If $\alpha$ is sufficiently small depending on $I,b,\beta$, and $\gamma$, then
\begin{equation}\label{eq:z_global_parameter}
    \sup_{\rho\in I}
    \left(\|z_\rho(t)\|_{L^\infty}
      +t^{-\beta}\|z_\rho(t)\|_{H^1}\right)
    \lesssim_{I,b,\beta,\gamma}\alpha t^\gamma,
    \qquad t\geq1.
\end{equation}
Consequently,
\begin{equation}\label{eq:v_global_parameter}
    \sup_{\rho\in I}\|v_\rho(t)\|_{L^\infty}
    \lesssim_{I,b,\beta,\gamma}\alpha t^{-\frac{1}{2}+\gamma}.
\end{equation}
\end{proposition}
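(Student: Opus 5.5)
We close a bootstrap on $[1,T]$ for the differentiated profile, in direct analogy with Lemma \ref{lem:close_boot}. We impose \eqref{eq:z_bootstrap} with a constant $K$ to be chosen. At $t=1$, Lemma \ref{lem:quant_z_v_small} gives $\|z_\rho(1)\|_{H^1}\lesssim_{I,b}\alpha$, so the hypothesis holds near $t=1$ once $K$ exceeds that constant. It then suffices to show that on $[1,T]$ the bound improves to $\frac{K}{2}\alpha t^\gamma$, provided $\alpha$ is small. The bound \eqref{eq:v_global_parameter} then follows at once from \eqref{eq:v_large_parameter}.

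For the $H^1$ part, differentiate the Duhamel formula of Lemma \ref{lem:close_boot} in $\rho$:
\[
z_\rho(t)=z_\rho(1)-\tfrac{i\lambda}{2}\int_1^t s^{-1}\partial_\rho\Big[V_\rho^{-1}\big(|V_\rho w_\rho|^2V_\rho w_\rho\big)\Big]ds-i\lambda\int_1^t\partial_\rho\mathcal N_{\rho,b}(s)\,ds .
\]
The last term is handled by \eqref{eq:partial_N_H1}: it is $\lesssim K^3\alpha^3t^{\beta+\gamma}$. When $\partial_\rho$ falls on $V_\rho^{-1}$ or on one of the $V_\rho$'s, we use Proposition \ref{prop:partial_V_est}. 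This applies because $w_\rho(s,0)=0$ and $z_\rho(s,0)=0$ by \eqref{eq:z_zero_frequency}. The $t^{1/2}$ trace term is controlled by the vanishing of $[V_\rho w](0)$ in \eqref{eq:V_zero_bound}, and by its $\rho$-derivative analogue, which follows from the first estimate of Proposition \ref{prop:partial_V_est} together with $d_\rho(0)=0$. These terms contribute
\[
s^{-1}\big(\alpha^2\|w\|_{H^1}+\alpha^2\|z\|_{H^1}+\alpha\|w\|_{H^1}\|z\|_{L^\infty}\big)+s^{-5/4}(\cdots),
\]
that is, $s^{-1+\beta+\gamma}K\alpha^3$ times constants. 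Integrating from $1$ to $t$ gives $\lesssim \alpha^3K(\beta+\gamma)^{-1}t^{\beta+\gamma}$. This closes the $H^1$ bound for $\alpha$ small.

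The $L^\infty$ part is the main obstacle. A naive $L^\infty$ Gronwall on the differentiated equation produces a factor $t^{C\alpha^2}$, which is harmless since $\gamma>0$ is fixed and $\alpha$ is small. The plan is therefore to differentiate the matrix equation \eqref{eq:matrix-pde} in $\rho$. This gives
\[
i\partial_t\vec z_\rho=\frac{\lambda}{2t}A\vec z_\rho+\frac{\lambda}{2t}\big[(\partial_\rho A)+\partial_{\vec w}A[\vec z_\rho]\big]\vec w_\rho+\lambda\partial_\rho\vec{\mathcal N}_{\rho,b}+\partial_\rho\mathcal E .
\]
Since $A$ is Hermitian, the first term drops out of $\partial_t|\vec z_\rho|^2$. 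The second term is pointwise $\lesssim t^{-1}\alpha^2(1+|\vec z_\rho|)|\vec z_\rho|$, using \eqref{eq:partial_B_bound} and \eqref{eq:d_rho_bounds} for $\partial_\rho S_j$. The forcing term is bounded by \eqref{eq:partial_N_Linfty}, which is integrable. The remainder $\partial_\rho\mathcal E$ is bounded by $K\alpha^3 t^{-5/4+\beta+\gamma}$ via Proposition \ref{prop:partial_V_est}, and it is integrable because $\gamma<\frac14-3\beta$ by \eqref{eq:choice_gamma}. Gronwall then gives
\[
|\vec z_\rho(t)|\le (C\alpha + CK^3\alpha^3)\,t^{C\alpha^2}\le \tfrac{K}{2}\alpha t^\gamma
\]
once $C\alpha^2<\gamma$ and $K$ is large. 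This closes the bootstrap. The continuity argument, first run on difference quotients in $\rho$, yields \eqref{eq:z_global_parameter}. The delicate points are verifying that $\partial_\rho\mathcal E$ indeed has the claimed decay, by expanding it exactly as in the proof of Lemma \ref{lem:cancel_homo_error}, and keeping all constants uniform on $I$.
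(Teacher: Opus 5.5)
Your proposal is correct. For the $H^1$ part it is the paper's argument: the same bootstrap on $[1,T]$, the differentiated Duhamel formula, \eqref{eq:partial_N_H1} for the forcing, Proposition \ref{prop:partial_V_est} justified by $w_\rho(s,0)=z_\rho(s,0)=0$, and the $t^{1/2}$ trace term handled through $[V_\rho w_\rho](0)$.

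The two routes differ only in the $L^\infty$ step. The paper uses neither the Hermitian structure nor Gronwall there. It bounds the differentiated homogeneous term pointwise by $\alpha^3(s^{-1}+s^{-\frac54+\beta})+\alpha^2(s^{-1}\|z_\rho\|_{L^\infty}+\dots)$ and inserts the bootstrap hypothesis directly. This gives $\int_1^t\alpha^2 s^{-1}\cdot C_0\alpha s^{\gamma}\,ds\lesssim C_0\alpha^3 t^\gamma/\gamma$, and the source term $\alpha^3\log t$ is absorbed by $t^\gamma$; that is exactly what the loss $t^\gamma$ in the hypothesis is for. Your energy identity for $|\vec z_\rho|^2$ removes only the $A\vec z_\rho$ piece. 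The linearized term $\partial_{\vec w}A[\vec z_\rho]\vec w_\rho$ (the analogue of $w^2\bar z$) has no sign, so you still get $t^{C\alpha^2}$ growth. That exponent is sharper than $t^\gamma$, but it buys nothing here, because your hypothesis \eqref{eq:z_bootstrap}, the forcing bound \eqref{eq:partial_N_H1}, and your bound on $\partial_\rho\mathcal E$ are all stated with the fixed loss $t^\gamma$ anyway. The paper's direct integration is therefore the simpler way to close.

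One small correction. Since $|(\partial_\rho A)\vec w_\rho|\lesssim\alpha^3$, the second term contributes $\lesssim t^{-1}\alpha^2(\alpha+|\vec z_\rho|)|\vec z_\rho|$ to $\partial_t|\vec z_\rho|^2$, not $t^{-1}\alpha^2(1+|\vec z_\rho|)|\vec z_\rho|$. With the factor $1$ as you wrote it, Gronwall only gives $|\vec z_\rho|\le(1+C\alpha)t^{C\alpha^2}-1$, which is not $\lesssim\alpha t^{C\alpha^2}$ uniformly in $t$. This is still $\lesssim_\gamma\alpha t^\gamma$ for $\alpha$ small depending on $\gamma$, so the bootstrap closes either way. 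Your displayed conclusion $(C\alpha+CK^3\alpha^3)t^{C\alpha^2}$, however, needs the factor $\alpha$ in place of $1$.
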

\begin{proof}
    By Lemma \ref{lem:quant_z_v_small} and Sobolev embedding, 
    \[
    \sup_{\rho\in I} \left( \|z_\rho(1)\|_{L^\infty}+\|z_\rho(1)\|_{H^1}\right)\lesssim_{I,b}\alpha.
    \]
    Fix $T>1$. We close a bootstrap on $[1,T]$. Namely, for a constant $C_0$, assume that
    \begin{equation}\label{eq:z_parameter_bootstrap}
        \sup_{\rho\in I} \left( \|z_\rho(t)\|_{L^\infty}+t^{-\beta}\|z_\rho(t)\|_{H^1}\right)\leq C_0\alpha t^\gamma,\quad 1\leq t\leq T.
    \end{equation}
    Then Lemma \ref{lem:H1_forcing_partial} implies
    \begin{equation}\label{eq:v_bootstrap_decay}
         \sup_{\rho\in I}\|v_\rho(t)\|_{L^\infty}\lesssim _I C_0\alpha t^{-\frac{1}{2}+\gamma}. 
    \end{equation}
   Differentiating the profile equation with respect to $\rho$, we obtain
\begin{align*}
    z_\rho(t)=z_\rho(1)-\frac{i\lambda}{2}\int_1^t s^{-1}
    \partial_\rho \left\{  V_\rho(s)^{-1} \big(|V_\rho(s)w_\rho(s)|^2V_\rho(s)w_\rho(s) \big)\right\}\,ds-i\lambda\int_1^t\partial_\rho\mathcal N_{\rho,b}(s)ds.
\end{align*}
For the homogeneous term, we differentiate both $V_\rho^{-1}$ and
$V_\rho w_\rho$.  The product estimates used in Lemma
\ref{lem:close_boot}, together with Proposition
\ref{prop:partial_V_est}, give
\begin{align*}
    &s^{-1}\left\|\partial_\rho \left\{V_\rho(s)^{-1} \big(
  |V_\rho(s)w_\rho(s)|^2V_\rho(s)w_\rho(s)\big)\right\}
    \right\|_{H^1}\,\\
    &\qquad \lesssim_{I,b} \alpha^3\left( s^{-1+\beta}+s^{-\frac{5}{4}+3\beta}
    \right)+\alpha^2\left(s^{-1}\|z_\rho(s)\|_{H^1} +s^{-1+\beta}\|z_\rho(s)\|_{L^\infty}+s^{-\frac{5}{4}+2\beta}\|z_\rho(s)\|_{H^1}\right).
\end{align*}
The first line on the right-hand side contains the terms in which the parameter derivative falls only on the operators, whereas the second line contains the terms involving $z_\rho$.  Under \eqref{eq:z_parameter_bootstrap}, and combining with Lemma \ref{lem:H1_forcing_a}, we obtain
\begin{equation}\label{eq:z_H1_bootstrap_improvement}
    \sup_{\rho\in I}\|z_\rho(t)\|_{H^1} \leq C_{I,b}\alpha
    +C_{I,b,\beta,\gamma,C_0}\alpha^3t^{\beta+\gamma}.
\end{equation}

We next estimate the $L^\infty$ norm.  The corresponding pointwise
estimate for the homogeneous term is
\begin{align*}
    &s^{-1} \left\| \partial_\rho\left\{V_\rho(s)^{-1}
    \big(|V_\rho(s)w_\rho(s)|^2V_\rho(s)w_\rho(s)\big)\right\}
    \right\|_{L^\infty}\\
    &\quad\lesssim_{I,b}\alpha^3\left(s^{-1} +s^{-\frac{5}{4}+\beta}\right)+\alpha^2\left(s^{-1}\|z_\rho(s)\|_{L^\infty}+s^{-\frac{5}{4}+\beta}\|z_\rho(s)\|_{L^\infty}+s^{-\frac{5}{4}}\|z_\rho(s)\|_{H^1}\right).
\end{align*}
The term $\alpha^3s^{-1}$ is produced when the parameter derivative
falls on the $\rho$-dependent scattering coefficients in the leading
cubic expression.  Using \eqref{eq:z_parameter_bootstrap} and
$\beta+\gamma<\frac{1}{4}$, we obtain
\begin{align*}
    \int_1^t s^{-1}\left\|\partial_\rho\left\{ V_\rho(s)^{-1}
    \big(|V_\rho(s)w_\rho(s)|^2V_\rho(s)w_\rho(s)\big)\right\}
    \right\|_{L^\infty}\,ds\lesssim_{I,b,\beta,\gamma,C_0}
    \alpha^3\bigl(\log t+t^\gamma\bigr).
\end{align*}
On the other hand, \eqref{eq:expression_partial_N}, the decay estimate for
$u_\rho$, and \eqref{eq:v_bootstrap_decay} give
\begin{align*}
    \|\partial_\rho\mathcal N_{\rho,b}(s)\|_{L^\infty}\lesssim_b
    \|u_\rho(s)\|_{L^\infty}^3 +\|u_\rho(s)\|_{L^\infty}^2
      \|v_\rho(s)\|_{L^\infty}\lesssim_{I,b,\beta,C_0}
    \alpha^3\left(s^{-\frac{3}{2}} +s^{-\frac{3}{2}+\gamma}\right),
\end{align*}
which is integrable.  Consequently,
\begin{equation}\label{eq:z_Linfty_bootstrap_improvement}
    \sup_{\rho\in I}\|z_\rho(t)\|_{L^\infty}
    \leq
    C_{I,b}\alpha
    +C_{I,b,\beta,\gamma,C_0}
      \alpha^3 t^\gamma.
\end{equation}
Combining \eqref{eq:z_H1_bootstrap_improvement} and
\eqref{eq:z_Linfty_bootstrap_improvement}, we conclude that
\begin{align*}
    \sup_{\rho\in I}
    \left(
        \|z_\rho(t)\|_{L^\infty}
        +t^{-\beta}\|z_\rho(t)\|_{H^1}
    \right)
    \leq
    \left(
        C_{I,b}
        +C_{I,b,\beta,\gamma,C_0}\alpha^2
    \right)\alpha t^\gamma,
\end{align*}
which closes the bootstrap argument with large enough $C_0$. 
This proves \eqref{eq:z_global_parameter} and \eqref{eq:v_global_parameter}.
\end{proof}
\begin{remark}
    The positive loss $t^\gamma$ is needed to absorb the long-range $\log(t)$ term. It can be taken as small as needed. 
\end{remark}


\subsection{Stability of inhomogeneity $a$.} In this subsection, we prove the stability of the inhomogeneous coefficients stated in Theorem \ref{thm:stability}. We first record Proposition \ref{prop:difference_third_order} with the dependence on the test function retained. 
\begin{lemma}\label{lem:const_u}
    Let $a,b$ be admissible, $q, \tilde q>0$ and let $\psi\in \mathcal{S}(\R)$. If $\varepsilon\|\psi\|_\Sigma\leq \varepsilon_{q,\tilde q,a,b}$, then 
    \begin{equation}\label{eq:const_cubic}
        \left|\int_0^\infty \int (a-b)(x)\big|U(t)\psi(x)\big|^4 dxdt\right|\lesssim\varepsilon^{-3} \left|\left\la\mapa(\varepsilon\psi)-\mapc(\varepsilon\psi),\vec h_\psi\right\ra\right|+C_{q,a,b}\varepsilon^2 \|\psi\|_{X}^6.
    \end{equation}
\end{lemma}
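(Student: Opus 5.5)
The plan is to rerun the proof of Proposition \ref{prop:difference_third_order} with $q=\tilde q$ fixed, this time keeping track of how every error depends on $\psi$. Set $\alpha:=\varepsilon\|\psi\|_X$. The goal is the quantitative identity
\[
\big\langle \mapa(\varepsilon\psi)-\mapc(\varepsilon\psi),\vec h_\psi\big\rangle
=-2i\lambda\varepsilon^3\int_0^\infty\!\!\int(a-b)|U(t)\psi|^4\,dx\,dt+\mathcal R,
\qquad |\mathcal R|\le C_{q,a,b}\,\varepsilon^5\|\psi\|_X^6 .
\]
Dividing by $2|\lambda|\varepsilon^3$, which is allowed since $\lambda\neq0$, and using the triangle inequality then gives \eqref{eq:const_cubic}.

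\emph{Step 1: homogeneity of the direct estimates.} The bootstrap of Section \ref{sec:direct} is run with data $\varepsilon\psi$, and all its smallness enters only through $\|\varepsilon\psi\|_\Sigma$. Hence Proposition \ref{prop:quant_smallness} gives $\|w_a(t)\|_{L^\infty}+\langle t\rangle^{-\beta}\|w_a(t)\|_{H^1}\lesssim_a\alpha$ and $\|u_a(t)\|_{L^\infty}\lesssim_a\alpha\langle t\rangle^{-1/2}$, and the same holds for $b$. Lemma \ref{lem:delta_flow} gives $\|U(t)\psi\|_{L^\infty}\lesssim\langle t\rangle^{-1/2}\|\psi\|_X$; this is exactly why the $L^1$ norm appears in $\|\cdot\|_X$. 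For the test vector, I will use $\|\vec h_\psi\|_{L^1\times L^1}\lesssim\|\langle\xi\rangle\dft\psi\|_{L^2}\lesssim\|\psi\|_{H^1}$, which follows from Cauchy--Schwarz and \eqref{eq:dft_est_3}, together with the pointwise bound $|B|\le1$.

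\emph{Step 2: redo Lemma \ref{lem:born_cubic_replacement} with explicit constants.} With $r_a=u_a-\varepsilon U(t)\psi$, Duhamel gives $\|r_a(t)\|_{L^2}\lesssim\alpha^3\log(2+t)$. The cubic difference is then bounded in $L^2$ by $\alpha^5\langle t\rangle^{-1}\log(2+t)/\|\psi\|_X^2\cdot\|\psi\|_X^2$, that is, by $\alpha^5\langle t\rangle^{-1}\log(2+t)$. To handle the pairing with $U(t)\psi$, I split time:
\begin{itemize}
\item on $[0,1]$, I use $\|U(t)\psi\|_{L^2}=\|\psi\|_{L^2}$;
\item on $[1,\infty)$, I use $\|a\|_{L^2}\|U(t)\psi\|_{L^\infty}\lesssim t^{-1/2}\|\psi\|_X$.
\end{itemize}
Both pieces are $\lesssim\alpha^5\|\psi\|_X=\varepsilon^5\|\psi\|_X^6$. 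Replacing $|u_a|^2u_a$ by $\varepsilon^3|U\psi|^2U\psi$ in both the $t\le1$ and $t\ge1$ contributions therefore costs $O(\varepsilon^5\|\psi\|_X^6)$. The pairing identity $\langle B^*\overrightarrow{\dft F},\vec h_\psi\rangle=2\langle F,\psi\rangle$ is used exactly as before.

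\emph{Step 3: redo Lemma \ref{lem:cancel_homo_error} and the phase removal.} I expect this to be the main obstacle, because this is where the uncontrolled constants $\lesssim_{a,b,\psi}$ were hidden. In the weighted Gr\"onwall argument I will take $\kappa=K\alpha^2$ rather than $K\varepsilon^2$. The difference bounds then become $\|w_a-w_b\|_{L^\infty}\lesssim\alpha^3t^{\beta}$ and $\|w_a-w_b\|_{H^1}\lesssim\alpha^3t^{2\beta}$. This needs initial data bounds at $t=1$ of size $\alpha^3$, which come from the small-time Duhamel formula, and forcing bounds from Lemma \ref{lem:H1_forcing_a} of size $\alpha^3$. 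The remainder estimate then reads $\|\mathcal E(t,\vec w_a)-\mathcal E(t,\vec w_b)\|_{L^\infty}\lesssim\alpha^5t^{-5/4+3\beta}$.

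After that, I remove the phase matrices $\mathcal D_a,\mathcal D_b$, using $\|\mathcal D-I\|\lesssim\alpha^2\log t$ together with $\|\mathcal E\|_{L^\infty},\|\mathcal N\|_{L^\infty}\lesssim\alpha^3t^{-5/4+\beta}$. Pairing with $\vec h_\psi\in L^1$ (Step 1) then bounds every such contribution by $\alpha^5\|\psi\|_{H^1}\le\varepsilon^5\|\psi\|_X^6$. All time integrals converge because $\beta<\frac1{12}$. The only delicate point is to check that no constant secretly depends on $\psi$ beyond $\|\psi\|_X$, for example through Lemma \ref{lem:delta_flow} or the small-time Gr\"onwall; I would verify each occurrence as I go. Collecting Steps 2 and 3 yields the displayed identity and hence \eqref{eq:const_cubic}.
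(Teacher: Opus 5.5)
Your proposal is correct and is essentially the paper's argument. The paper's proof simply says to retain the $\psi$-dependence in Lemmas \ref{lem:born_cubic_replacement} and \ref{lem:cancel_homo_error} and Proposition \ref{prop:difference_third_order}, obtaining a remainder $C_{q,a,b}\varepsilon^5\|\psi\|_X^6$ and dividing by $\varepsilon^3$; your bookkeeping with $\alpha=\varepsilon\|\psi\|_X$ (including $\kappa=K\alpha^2$ in the Gr\"onwall step) is a faithful expansion of that sketch, and your bound $\|\vec h_\psi\|_{L^1\times L^1}\lesssim_q\|\psi\|_{H^1}$ is the estimate actually needed, since the remainders are controlled only in $L^\infty_\xi$, whereas the paper's proof cites only the $L^2$ bound.
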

\begin{proof}
    Retaining the dependence on $\psi$ in the proofs of Lemmas \ref{lem:born_cubic_replacement} and \ref{lem:cancel_homo_error} and Proposition \ref{prop:difference_third_order} gives
    \begin{equation}
        \left|\left \la \,\mapa(\varepsilon\psi)-\mapc(\varepsilon\psi) ,\,\vec{h}_\psi\right\ra+2i\lambda\varepsilon^3 \int_0^\infty \int (a-b)\big| U(t)\psi\big|^4 dxdt\right|\lesssim C_{q,a,b}\varepsilon^5\|\psi\|_X^6.
    \end{equation}
    Here the sixth power arises because each remainder is quintic in the solution or initial data and is paired with one additional copy of $U_q(t)\psi$. Moreover, unitarity of $\dft_q$ implies
    \[
    \big\|\vec h_\psi\big\|_{L^2\times L^2}\lesssim \|\psi\|_{L^2}.
    \]
    Dividing by $\varepsilon^3$ then proves \eqref{eq:const_cubic}.
\end{proof}

\medskip

We fix $\eta\in C_c^\infty((-1,1))$ with $\|\eta\|_{L^4}=1$ and define for $h<1$
\[
\phi_{x_0,h}:=h^{-\frac{1}{4}}\eta\left(\frac{x-x_0}{h}\right).
\]
For $x_0>2h$, this function belongs to $C_c^\infty((0,\infty))$. Direct computations give
\begin{equation}\label{eq:scaling}
    \begin{split}
        \big\|\test\big\|_{L^4}=\|\eta\|_{L^4},\qquad \big\|\test\big\|_{L^2}= h^{\frac{1}{4}}\|\eta\|_{L^2},&\qquad \big\|\test\big\|_{H^\frac{1}{4}}\lesssim_\eta 1,\\
    \big\|\test\big\|_{H^1}=h^{-\frac{3}{4}}\left(\|\eta'\|_{L^2}^2+h^2 \|\eta\|_{L^2}^2 \right)^\frac{1}{2}\lesssim_\eta h^{-\frac{3}{4}}, \quad& \big\|x\test\big\|_{L^2}\lesssim _\eta x_0 h^\frac{1}{4}+h^\frac{5}{4}.
    \end{split}
\end{equation}
Set 
\[
\psi_{\nu,x_0,h}(x):=e^{ix\nu}\test(x).
\]
If $3h\leq x_0\leq h^{-1}$, $\nu h\geq 1$, then 
\begin{equation}\label{eq:scale_X}
    \big \|\testhi\big\|_{L^2}\lesssim_\eta h^\frac{1}{4},\qquad \big \|\testhi\big\|_{X}\lesssim_\eta \nu h^\frac{1}{4}.
\end{equation}
We also recall the one-dimensional Schr\"odinger maximal function estimate of Kenig-Ruiz \cite{KR}; we use the formulation in \cite{KPV}. 
\begin{lemma}\label{lem:maximal_function}
    For $f\in \mathcal{S}(\R)$, define
    \begin{equation*}
        \mathcal{M}_f(x):=\sup_{t\geq 0}\big |U_0(t)f(x)\big|.
    \end{equation*}
    Then 
    \begin{equation*}
        \big\| \mathcal{M}_f\big\|_{L^4}\lesssim \|f\|_{H^\frac{1}{4}}.
    \end{equation*}
\end{lemma}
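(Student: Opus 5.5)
The statement is the Kenig--Ruiz maximal estimate. Since the paper only cites it from \cite{KR,KPV}, I would reprove it by the standard $TT^*$ argument. I would in fact prove the stronger homogeneous bound $\|\mathcal M_f\|_{L^4}\lesssim\|f\|_{\dot H^{1/4}}$. This suffices because $\|f\|_{\dot H^{1/4}}\le\|f\|_{H^{1/4}}$.

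\textbf{Linearization and duality.} First I would linearize the supremum. By monotone convergence it is enough to take the supremum over $t$ in a finite set, so any measurable function $t:\R\to[0,\infty)$ with finitely many values is admissible. Set
\[
T g(x):=\int_\R e^{ix\xi-it(x)\xi^2}|\xi|^{-\frac14}g(\xi)\,d\xi .
\]
Then $U_0(t(x))f(x)=(2\pi)^{-1/2}\,T g(x)$ with $g=|\xi|^{1/4}\hat f$, and $\|g\|_{L^2}=\|f\|_{\dot H^{1/4}}$. The goal is $\|T\|_{L^2\to L^4}\lesssim1$ uniformly in the choice of $t(\cdot)$. By duality this is equivalent to $\|TT^*\|_{L^{4/3}\to L^4}\lesssim1$. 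The operator $TT^*$ has kernel
\[
K(x,y)=\int_\R e^{i(x-y)\xi-i(t(x)-t(y))\xi^2}|\xi|^{-\frac12}\,d\xi .
\]

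\textbf{Key kernel bound.} The heart of the proof is the uniform estimate
\[
\sup_{b\in\R}\left|\int_\R e^{ia\xi-ib\xi^2}|\xi|^{-\frac12}\,d\xi\right|\lesssim|a|^{-\frac12},
\]
understood as an improper or oscillatory integral. Rescaling $\xi=\eta/|a|$ reduces it to showing that
\[
I(\beta):=\int_\R e^{i\,\mathrm{sgn}(a)\eta-i\beta\eta^2}|\eta|^{-\frac12}\,d\eta
\]
is bounded uniformly in $\beta=b/a^2$. I would prove this by splitting $\R$ into three regions.
\begin{itemize}
\item On $|\eta|\le1$ the integrand is absolutely integrable, giving a bound of $2\int_0^1\eta^{-1/2}\,d\eta$.
\item Away from the stationary point $\eta_0=\mathrm{sgn}(a)/(2\beta)$, say where $|\eta-\eta_0|\ge|\eta_0|/2$ and $|\eta|\ge1$, the phase derivative satisfies $|1-2\beta\eta|\gtrsim1$ in a monotone way. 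Van der Corput (first derivative) with the monotone amplitude $|\eta|^{-1/2}$ then gives an $O(1)$ bound.
\item Near $\eta_0$, in the region $|\eta-\eta_0|<|\eta_0|/2$, the amplitude is comparable to $|\beta|^{1/2}$ and has bounded variation of the same size. Van der Corput with the second derivative $2|\beta|$ gives $O(|\beta|^{-1/2})\cdot|\beta|^{1/2}=O(1)$.
\end{itemize}
When $|\beta|$ is small or $\eta_0$ falls inside $|\eta|\le1$, the regions only simplify. Getting this uniformity in $\beta$ right, including the orientation of the phase, is the step I expect to be the main (though routine) obstacle.

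\textbf{Conclusion.} With $|K(x,y)|\lesssim|x-y|^{-1/2}$ uniformly in $t(\cdot)$, the one-dimensional Hardy--Littlewood--Sobolev inequality applies with $\frac1{4/3}-\frac14=\frac12=1-\frac12$. It shows that convolution with $|x|^{-1/2}$ maps $L^{4/3}\to L^4$, which gives the $TT^*$ bound. (To justify the formal kernel computation, I would insert a factor $e^{-\delta\xi^2}$ and let $\delta\to0$; the kernel bound is uniform in $\delta$, because the factor only perturbs $b$ to $b-i\delta$ and the same van der Corput argument applies.) Undoing the linearization yields $\|\mathcal M_f\|_{L^4}\lesssim\|f\|_{\dot H^{1/4}}\le\|f\|_{H^{1/4}}$ for $f\in\mathcal S(\R)$.
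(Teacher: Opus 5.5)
Your proposal is correct. Linearizing the supremum, proving the uniform kernel bound $\bigl|\int_\R e^{ia\xi-ib\xi^2}|\xi|^{-1/2}\,d\xi\bigr|\lesssim|a|^{-1/2}$ by scaling and van der Corput, reducing via $TT^*$, and applying Hardy--Littlewood--Sobolev with exponents $(4/3,4)$ is the standard Kenig--Ponce--Vega proof. Your $e^{-\delta\xi^2}$ regularization, with the Gaussian treated as a monotone factor in the amplitude, justifies the formal kernel computation. The paper gives no proof of its own and only cites this result, so your argument supplies the proof it relies on, in the slightly stronger homogeneous $\dot H^{1/4}$ form.
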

The main quantitative estimate is the following.
\begin{proposition}\label{prop:primitive_difference}
    Suppose that $3h\leq x_0\leq h^{-1}$, $\nu h^3\geq 1$, $\nu h\geq 1$, $\varepsilon \nu h^\frac{1}{4}\leq \varepsilon_{a,b}$. Then 
    \begin{equation}\label{eq:difference_est_primitive}
        \begin{split}
            \big|\mathcal{A}_+(x_0)-\mathcal{B}_+(x_0)\big|\lesssim_\eta C_{q,a,b}\bigg( &\varepsilon^{-3}\nu \left|\left\la\mapa(\varepsilon\testhi)-\mapc(\varepsilon\testhi),\vec h_{\testhi}\right\ra\right|\\
            &\qquad +\varepsilon^2 \nu^7 h^\frac{3}{2}+\nu^{-\frac{3}{4}}h^{-\frac{5}{4}}+\nu^{-3}h^{-5}+\left(\nu h^3\right)^{-\frac{1}{20}}+h\bigg).
        \end{split}
    \end{equation}
\end{proposition}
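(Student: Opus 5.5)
We apply Lemma \ref{lem:const_u} to $\psi=\testhi$ (with $q=\widetilde q$ suppressed, or $\mapc$ as displayed) and multiply by $\nu$. Since $\|\testhi\|_X\lesssim_\eta \nu h^{\frac14}$ by \eqref{eq:scale_X}, the remainder $\nu\varepsilon^2\|\testhi\|_X^6$ is $\lesssim \varepsilon^2\nu^7 h^{\frac32}$. This produces the first two terms on the right of \eqref{eq:difference_est_primitive}. It then remains to show that
\[
\nu\int_0^\infty\!\!\int (a-b)\,|U(t)\testhi|^4\,dx\,dt
\]
differs from $\frac12\|\eta\|_{L^4}^4\bigl(\mathcal A_+(x_0)-\mathcal B_+(x_0)\bigr)$ by at most the remaining error terms. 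We do this in three comparison steps, which quantify the proof of Proposition \ref{prop:recover_a_delta_flow}.

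\emph{Step 1 (delta flow to free flow).} Write $U(t)\testhi=U_0(t)\testhi+B_\nu(t)$ as in Proposition \ref{prop:recover_a_delta_flow}. The multipliers are now $\overline{R}\,\hat\phi_{x_0,h}(|\xi|-\nu)$, and so on. Since $\widehat{\test}$ has width $h^{-1}$ and $L^1$ size $h^{-1/4}\cdot h\cdot h^{-1}$, while $\partial_\xi$ costs either $x_0\lesssim h^{-1}$ or $\nu^{-1}$, we expect $\|m_\nu\|_{W^{1,1}}\lesssim \nu^{-1}h^{-5/4}$ up to the $x_0$ factor. Lemma \ref{lem:osc_W11} then gives $\|B_\nu(t)\|_{L^\infty}\lesssim \nu^{-1}h^{-5/4}\langle t\rangle^{-1/2}$. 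Repeating the H\"older argument with $d\mu_\nu=\nu|a-b|\,dx$ yields the quartic term $\nu\cdot\nu^{-4}h^{-5}\lesssim \nu^{-3}h^{-5}$. For the cross term, the factor $\bigl(\int\|U_0\testhi\|^4_{L^4_{d\mu_\nu}}\bigr)^{3/4}$ is bounded by $\|\mathcal M_{\test}\|_{L^4}$ through Lemma \ref{lem:maximal_function} and $\|\test\|_{H^{1/4}}\lesssim1$, after the Galilean change of variables. Combined with $\|B_\nu\|_{L^4_t}$, this gives $\nu^{-3/4}h^{-5/4}$.

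\emph{Step 2 (free packet to transported profile).} By Galilean invariance and $s=2\nu t$, the free integral equals
\[
\tfrac12\int_0^\infty\!\!\int (a-b)(x)\,\bigl|U_0(\tfrac{s}{2\nu})\test(x-s)\bigr|^4\,dx\,ds.
\]
We compare $U_0(\tau)\test$ with $\test$ for $\tau=s/(2\nu)$. For $s\le S$ we use $\|U_0(\tau)\test-\test\|_{L^\infty}\lesssim \tau^{\theta}\|\test\|_{H^{1/2+2\theta}}$, which costs roughly $(S/\nu)^\theta h^{-3/4-2\theta}$. The natural scale is $\tau\ll h^2$, that is $s\ll \nu h^2$; since $a-b\in L^\infty$ only, the $L^\infty$ bound is paired with $\|a-b\|_{L^1}$. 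For $s\ge S$ we use the maximal function in the form $\sup_x\int_S^\infty|U_0(s/2\nu)\test(x-s)|^4ds$, together with dispersive decay $t^{-1/2}h^{1/4}$ once $t\gtrsim h^2$, giving a tail $\int_{S}^\infty (\nu h^{1/2}/s)^2\,ds\sim \nu^2h/S$. Optimizing $S$ between these regimes with $\nu h^3\ge1$ should produce the $(\nu h^3)^{-1/20}$ loss. Tuning the interpolation exponents is the main obstacle, and I would first try $\theta$ small with $S=\nu h^{2}(\nu h^3)^{\kappa}$ and then balance.

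\emph{Step 3 (primitive).} By Fubini and $\|\test\|_{L^4}=1$,
\[
\int_0^\infty\!\!\int (a-b)|\test(x-s)|^4\,dx\,ds=\int|\test(y)|^4\bigl(\mathcal A_+-\mathcal B_+\bigr)(y)\,dy .
\]
Since $|\test|^4$ is a probability density supported in $[x_0-h,x_0+h]\subset\R_+$ and $\mathcal A_+-\mathcal B_+$ is Lipschitz with constant $\|a-b\|_{L^\infty}$, this equals $(\mathcal A_+-\mathcal B_+)(x_0)+\mathcal O(h)$. This supplies the final term $h$. Collecting all three steps gives \eqref{eq:difference_est_primitive}.
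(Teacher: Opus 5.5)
Your application of Lemma \ref{lem:const_u} (with $\|\psi_{\nu,x_0,h}\|_X\lesssim\nu h^{1/4}$), your Step 1 and your Step 3 agree with the paper. The gap is in Step 2. You leave it as a plan, and the plan cannot be completed by tuning exponents.

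For $s\geq S$ you bound the $|U_0(\frac{s}{2\nu})\phi_{x_0,h}(x-s)|^4$ contribution by $\|a-b\|_{L^1}\sup_x\int_S^\infty|U_0(\frac{s}{2\nu})\phi_{x_0,h}(x-s)|^4\,ds$ together with dispersive decay. The correct size is $|U_0(t)\phi_{x_0,h}|\lesssim t^{-1/2}\|\phi_{x_0,h}\|_{L^1}\sim t^{-1/2}h^{3/4}$, not $t^{-1/2}h^{1/4}$. With it this term is of order $\nu^2h^3/S$, which is small only if $S\gg\nu^2h^3$. Even the sharp value of that supremum is of order $\nu h^2/S$.

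On the other hand, $U_0(\tau)\phi_{x_0,h}-\phi_{x_0,h}$ is small only for $\tau\ll h^2$; your bound should read $\tau^\theta h^{-1/4-2\theta}$. This means $S\ll\nu h^2$. Since $\nu^2h^3=(\nu h)\,\nu h^2\geq\nu h^2$, the two windows are disjoint. No choice of $S$ and $\theta$ produces $(\nu h^3)^{-1/20}$, and with your exponents the mismatch is only worse.

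This failure is structural. Take $a-b=\mathbf 1_{[X,X+1]}$ with $X\gg\nu h^2$. The packet reaching $X$ has already dispersed, so $\int_0^\infty|U_0(\frac{s}{2\nu})\phi_{x_0,h}(X-s)|^4\,ds$ is of order $\nu h^2/X$, while the transported integral equals $\|\phi_{x_0,h}\|_{L^4}^4=1$. Hence any estimate that uses only $\|a-b\|_{L^1\cap L^\infty}$ uniformly in $x$ must fail. The decay of $a-b$ has to enter the large-$s$ $U_0$-term, not only the $|\phi_{x_0,h}|^4$ term.

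The missing idea, which is what the paper does, is to exploit the spatial localization of both $a-b$ and the maximal function.

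\begin{itemize}
\item Evaluate $a-b$ at $x+s$ and the packet at $x$, and set $D_\nu(s,x)=U_0(\frac{s}{2\nu})\phi_{x_0,h}(x)-\phi_{x_0,h}(x)$.
\item For $s\geq T$, bound $|D_\nu|\leq 2\mathcal M_{\phi_{x_0,h}}(x)$, which does not depend on $s$. Integrating in $s$ first leaves $\int_{x+T}^\infty|a-b|$.
\item For $x>-T/2$ this is $\lesssim T^{-1/2}\|x(a-b)\|_{L^2}$, paired with $\|\mathcal M_{\phi_{x_0,h}}\|_{L^4}^4\lesssim\|\phi_{x_0,h}\|_{H^{1/4}}^4\lesssim 1$ from Lemma \ref{lem:maximal_function}.
\item For $x<-T/2$, use $\mathcal M_{\phi_{x_0,h}}(x)\lesssim h^{-1/4}\langle (x-x_0)/h\rangle^{-1/2}$, which follows from Lemma \ref{lem:free_packet_localization} and scaling. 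This gives $\int_{x<-T/2}\mathcal M_{\phi_{x_0,h}}^4\lesssim h/T$.
\item For $s\leq T$, the pointwise bound $|D_\nu|\lesssim(s/2\nu)^{1/4}h^{-3/4}$ contributes $T^2/(\nu h^3)$.
\end{itemize}

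Together, $I=\int_0^\infty\int|a-b|(x+s)|D_\nu|^4\,dx\,ds\lesssim T^2/(\nu h^3)+T^{-1/2}+h/T\lesssim(\nu h^3)^{-1/5}$ for $T=(\nu h^3)^{2/5}$. H\"older against $\int_0^\infty\int|a-b|(x+s)|\phi_{x_0,h}(x)|^4\,dx\,ds\lesssim\|a-b\|_{L^1}$ then turns this into the $(\nu h^3)^{-1/20}$ term.
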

\begin{proof}
    Apply Lemma \ref{lem:const_u} to $\testhi$. Using the preceding norm estimates and multiplying by $\nu$, we obtain
    \begin{align*}
        \left|\nu\int_0^\infty \int (a-b)(x)\big|U_q(t)\testhi(x)\big|^4 dxdt\right|\lesssim\varepsilon^{-3}\nu \left|\left\la\mapa(\varepsilon\testhi)-\mapc(\varepsilon\testhi),\vec h_{\testhi}\right\ra\right|+C_{q,a,b}\varepsilon^2 \nu^7 h^\frac{3}{2}.
    \end{align*}
    We repeat the delta/free comparison from the proof of Proposition \ref{prop:recover_a_delta_flow}, retaining the dependence on the scaled packet. Writing 
    \[
    B_{\nu,x_0,h}(t):=U_q(t)\testhi-U_0(t)\testhi,
    \]
    a direct differentiation of the oscillatory amplitudes, together with $x_0\leq h^{-1}$, $\nu h\geq 1$, and the rapid decay of $\hat \eta$, gives
    \[
    \big\| B_{\nu,x_0,h}(t)\big\|_{L^\infty}\lesssim_\eta \la t\ra^{-\frac{1}{2}}\nu^{-1}h^{-\frac{5}{4}}.
    \]
    The argument of Proposition \ref{prop:recover_a_delta_flow}, now using Lemma \ref{lem:maximal_function} and \eqref{eq:scaling}, yields
    \begin{align*}
        \bigg|\int_0^\infty \int (a-&b)(x+s)\big|U_0\left(\frac{s}{2\nu}\right)\test(x)\big|^4 dxds\bigg|\\
        &\lesssim_\eta \varepsilon^{-3}\nu \left|\left\la\mapa(\varepsilon\testhi)-\mapc(\varepsilon\testhi),\vec h_{\testhi}\right\ra\right|+C_{q,a,b}\left(\varepsilon^2 \nu^7 h^\frac{3}{2}+\nu^{-\frac{3}{4}}h^{-\frac{5}{4}}+\nu^{-3}h^{-5} \right).
    \end{align*}

    \smallskip

    \noindent Set 
    \[
    D_\nu (s,x):=U_0\left(\frac{s}{2\nu}\right) \test(x)-\test(x)=\frac{1}{\sqrt{2\pi}}\int e^{ix\xi}\left(e^{-i\frac{s}{2\nu}\xi^2}-1\right)\hat \phi(\xi) d\xi.
    \]
    Splitting the frequency integral at $|\xi|=\left(\frac{s}{2\nu}\right)^{-\frac{1}{2}}$, we obtain
    \begin{equation}
        \left|D_\nu(s,x)\right|\lesssim \left(\frac{s}{2\nu}\right)^\frac{1}{4}\big\|\test\big\|_{H^1}\lesssim _\eta  \left(\frac{s}{2\nu}\right)^\frac{1}{4}h^{-\frac{3}{4}}.
    \end{equation}
    Let
    \[
    I:=\int_0^\infty \int |a-b|(x+s)\big|D_\nu(s,x)\big|^4dxds.
    \]
    Splitting at $s=T>1$, the preceding pointwise estimate gives
    \begin{align*}
        \int_0^T \int |a-b|(x+s)\big|D_\nu(s,x)\big|^4dxds\lesssim_\eta C_{a,b}\,\frac{T^2}{\nu h^3}.
    \end{align*}
    For $s\geq T$, we use $|D_\nu(s,x)|\leq 2\mathcal{M_{\test}}(x)$ and split the $x-$integral at $-\frac{T}{2}$. For $x>-\frac{T}{2}$,
    \[
    \int_{x+T}^\infty |a-b|(y)dy\lesssim T^{-\frac{1}{2}}\big \|x(a-b)\big\|_{L^2}.
    \]
    On the complementary region, Lemma \ref{lem:free_packet_localization} and scaling gives
    \[
    \int_{x<-\frac{T}{2}} \big |\mathcal{M}_{\test} (x)\big|^4 dx\lesssim_\eta h^{-1} \int _{x<-\frac{T}{2}}\left \la \frac{x-x_0}{h}\right\ra^{-2}dx\lesssim_\eta \frac{h}{T}.
    \]
    Consequently, taking $T=\left(\nu h^3\right)^\frac{2}{5}\geq 1$ we bound
    \[
    I\lesssim_\eta C_{a,b} \left(\frac{T^2}{\nu h^3}+T^{-\frac{1}{2}}+\frac{h}{T} \right)\lesssim _\eta C_{a,b} \left(\nu h^3\right)^{-\frac{1}{5}}. 
    \]
    H\"older's inequality with respect to the measure $|a-b|(x+s)$ and $\|\test\|_{L^4}=\|\eta\|_{L^4}$ then gives
    \begin{align*}
        \int_0^\infty \int |a-b|(x+s) \big|\test(x)\big|^3 \big |D_\nu(s,x)\big|dxds\lesssim_\eta C_{a,b} \left(\nu h^3 \right)^{-\frac{1}{20}}.
    \end{align*}
    Thus
    \begin{align*}
    \bigg|\int_0^\infty \int (a-b)(x+s)\big|U_0\left(\frac{s}{2\nu}\right)\test(x)\big|^4 dxds-\int_0^\infty |\phi_{x_0,h}(x)|^4 &\big( \mathcal{A}_+(x)- \mathcal{B}_+(x)\big)dx\bigg|\lesssim_\eta C_{a,b} \left(\nu h^3\right)^{-\frac{1}{20}}.
    \end{align*}
    Finally, we bound
    \begin{align*}
     \left|\big(\mathcal{A}_+-\mathcal{B}_+\big)(x_0)-\int_0^\infty \big|\test\big|^4 \left(\mathcal{A}_+(x)-\mathcal{B}_+(x)\right)dx \right|&\leq h\|a-b\|_{L^\infty} \|x^{\frac{1}{4}}\eta\|_{L^4}^4\lesssim_\eta C_{a,b} h. 
\end{align*}
Combining the above estimates, we prove \eqref{eq:difference_est_primitive}.
\end{proof}

\medskip

Note that Proposition \ref{prop:primitive_difference} is stated with modified scattering maps with common delta strength. In order to compare modified scattering maps with different delta strengths, we use triangle inequality as well as establishing the dependence of $S_{\rho,a}$ on $\rho$. 
\begin{proposition}\label{prop:parameter_scattering_remainder}
With $\varepsilon\|\psi\|_X$ sufficiently small depending on $I,b,\beta$, and $\gamma$
\begin{equation}\label{eq:parameter_scattering_remainder}
    \sup_{\rho\in I}
    \|\partial_\rho\mathcal R_{\rho,b}(\varepsilon,\psi)
       \|_{L^\infty\times L^\infty}
    \lesssim_{I,b}\varepsilon^3\|\psi\|_X^3,
\end{equation}
where $\mathcal R_{\rho,b}$ is defined in \eqref{eq:L_qa}. 
Consequently, for $q,\widetilde q\in I$,
\begin{equation}\label{eq:remainder_difference_q}
    \|\mathcal R_{q,b}(\varepsilon,\psi)
       -\mathcal R_{\widetilde q,b}(\varepsilon,\psi)
       \|_{L^\infty\times L^\infty}
    \lesssim_{I,b}|q-\widetilde q|\varepsilon^3\|\psi\|_X^3.
\end{equation}
\end{proposition}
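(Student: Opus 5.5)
We write $\mathcal R_{\rho,b}(\varepsilon,\psi)=S_{\rho,b}(\varepsilon\psi)-\varepsilon L_\rho\psi$ and differentiate the representation used in the proof of Lemma \ref{lem:linearization_scattering_map}:
\[
\mathcal R_{\rho,b}(\varepsilon,\psi)=-i\lambda B_\rho^*\int_0^1 \overrightarrow{\dft_\rho U_\rho(-t)\bigl[(1+b)|u_\rho|^2u_\rho\bigr]}\,dt+\int_1^\infty \partial_t\vec g_{\rho,b}(t)\,dt .
\]
The derivative of the first term is bounded by $\alpha^3$ directly by \eqref{eq:partial_small_time_remainder} in Lemma \ref{lem:quant_z_v_small}. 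The second term is the real content. Once \eqref{eq:parameter_scattering_remainder} holds, \eqref{eq:remainder_difference_q} follows from the fundamental theorem of calculus in $\rho$ along the segment between $q$ and $\tilde q$, which lies in $I$. As in Subsection \ref{subsec:parameter_dependence}, this step is carried out first for difference quotients, with bounds uniform in the increment.

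For the large-time term, write $\partial_t\vec g_{\rho}=-i\mathcal D_\rho\bigl(\lambda B_\rho^*\vec{\mathcal N}_{\rho,b}+B_\rho^*\mathcal E_\rho(t,\vec w_\rho)\bigr)$. The goal is an estimate $\|\partial_\rho\partial_t\vec g_\rho(t)\|_{L^\infty}\lesssim \alpha^3 t^{-1-\delta}$ for some $\delta>0$, uniformly in $\rho\in I$. Integrating this in $t$ then gives the claim.

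When $\partial_\rho$ falls on the forcing, Lemma \ref{lem:H1_forcing_partial} together with Proposition \ref{prop:quant_z_big} gives $\|\partial_\rho\mathcal N_{\rho,b}\|_{L^\infty}\lesssim\alpha^3t^{-3/2+\gamma}$. The bounds \eqref{eq:partial_B_bound} and \eqref{eq:Na_Linfty_bound} handle $\partial_\rho B_\rho^*$.

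When $\partial_\rho$ hits $\mathcal D_\rho$, it produces $\partial_\rho\Theta_{j}=\int_1^t\frac{\lambda}{2s}\partial_\rho|f_j|^2\,ds$. Here $\partial_\rho f=\partial_\rho B_\rho^*\vec w_\rho+B_\rho^*\vec z_\rho=\mathcal O(\alpha t^\gamma)$, so $|\partial_\rho\Theta_j|\lesssim\alpha^2t^{\gamma}$. Multiplying by the $\mathcal O(\alpha^3t^{-5/4+\beta})$ size of the remaining factors gives an integrable bound, since $\beta+\gamma<1/4$ by \eqref{eq:choice_gamma}.

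The main obstacle is $\partial_\rho\mathcal E_\rho(t,\vec w_\rho)$. The remainder $\mathcal E$ is defined implicitly as $\frac{\lambda}{2t}\bigl(V_\rho^{-1}(|V_\rho w|^2V_\rho w)-\text{leading part}\bigr)$. Its $\rho$-derivative must retain the $t^{-5/4+}$ decay, and not merely $t^{-1}$; otherwise the integral diverges logarithmically. Indeed, the $\alpha^3s^{-1}$ term in the proof of Proposition \ref{prop:quant_z_big} came precisely from the leading part, which is absent from $\mathcal E$.

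I would handle $\mathcal E$ as in the difference estimate of Lemma \ref{lem:cancel_homo_error}, with differences replaced by derivatives. The expansion $V_\rho w=\vec S_\rho\cdot\vec w+T_\rho$ is differentiated, and Proposition \ref{prop:partial_V_est} gives $\|\partial_\rho T_\rho\|_{L^\infty}\lesssim t^{-1/4}(\|w\|_{H^1}+\|z\|_{H^1})\lesssim\alpha t^{-1/4+\beta+\gamma}$. Likewise, the asymptotics of $\partial_\rho V_\rho^{-1}$, including the Fresnel and origin terms controlled via $d_\rho(0)=0$, yield the same $t^{-1/4}$ gain on the nonleading part.

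Collecting the cubic products with one differentiated factor gives $\|\partial_\rho\mathcal E\|_{L^\infty}\lesssim\alpha^3t^{-5/4+3\beta+\gamma}$, which is integrable by \eqref{eq:choice_gamma}. The delicate point is to check that the leading algebraic parts (with $\vec S_\rho$ differentiated) cancel exactly between the full nonlinearity and the subtracted matrix term $A$. I expect this to hold because both are the same explicit polynomial in $S_{1},S_2,\vec w$.
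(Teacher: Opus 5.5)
Your proposal is correct and follows essentially the same route as the paper. It uses the same three-term decomposition of $\mathcal R_{\rho,b}$, the bounds $\|\partial_\rho\mathcal D_{\rho,b}\|_{L^\infty}\lesssim\alpha^2t^\gamma$ and \eqref{eq:partial_N_Linfty} for the localized part, and an integrable bound on $\partial_\rho\mathcal E_\rho$ obtained from Proposition \ref{prop:partial_V_est}; the paper gets $t^{-5/4+\beta+\gamma}$, and your weaker $t^{-5/4+3\beta+\gamma}$ is still integrable by \eqref{eq:choice_gamma}. The cancellation you flag is automatic, since the first component of $A(\xi,\vec w)\vec w$ is exactly $\overline{S_1}|\vec S\cdot\vec w|^2\vec S\cdot\vec w+\overline{S_2}|\vec S\cdot\overrightarrow{\refl w}|^2\vec S\cdot\overrightarrow{\refl w}$, so the paper writes $\partial_\rho\mathcal E_\rho$ as the derivative of $|V_\rho w_\rho|^2V_\rho w_\rho-|\vec S_\rho\cdot\vec w_\rho|^2\vec S_\rho\cdot\vec w_\rho$ plus the derivative of the $V_\rho^{-1}$-approximation error, and no leading term ever appears.
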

\begin{proof}
Using $\dft_\rho U_\rho(-t)=e^{it\xi^2}\dft_\rho$, write
\begin{align}
\mathcal R_{\rho,b}(\varepsilon,\psi)
=&-i\lambda B_\rho^*\int_0^1
 \overrightarrow{e^{it\xi^2}\dft_\rho
  [(1+b)|u_\rho|^2u_\rho]}
 \,dt \notag\\
&-i\lambda\int_1^\infty
 \mathcal D_{\rho,b}(t)B_\rho^*\vec{\mathcal N}_{\rho,b}(t)\,dt
 -i\int_1^\infty
 \mathcal D_{\rho,b}(t)B_\rho^*\mathcal E_\rho(t,\vec w_\rho)\,dt.
\label{eq:scattering_remainder_decomposition}
\end{align}
The derivative of the first line is bounded by
\eqref{eq:partial_small_time_remainder}.

We next record the bounds needed for the large-time terms.  Since
\[
    \partial_\rho\vec f_\rho =(\partial_\rho B_\rho^*)\vec w_\rho+B_\rho^*\vec z_\rho,
\]
the definition
$\Theta_{\rho,j}(t)=\lambda\int_1^t(2s)^{-1}|f_{\rho,j}(s)|^2\,ds$
gives
\begin{equation}\label{eq:partial_phase_rho}
    \partial_\rho\Theta_{\rho,j}(t)  =\lambda\int_1^t s^{-1}
     \Re \big( \overline{f_{\rho,j}(s)}\,\partial_\rho f_{\rho,j}(s)\big)\,ds.
\end{equation}
Proposition \ref{prop:quant_z_big} therefore implies
\begin{equation}\label{eq:partial_D_rho}
    \|\partial_\rho\mathcal D_{\rho,b}(t)\|_{L^\infty}
    \lesssim_{I,b}\alpha^2t^\gamma.
\end{equation}
Together with \eqref{eq:partial_N_Linfty} and
$\|\mathcal N_{\rho,b}(t)\|_{L^\infty}\lesssim_b\alpha^3t^{-\frac{3}{2}}$,
this gives
\begin{equation}\label{eq:parameter_localized_integrable}
    \int_1^\infty \|\partial_\rho(
      \mathcal D_{\rho,b}B_\rho^*\vec{\mathcal N}_{\rho,b})(t)
      \|_{L^\infty}\,dt\lesssim_{I,b}\alpha^3.
\end{equation}
It remains to treat the homogeneous remainder.  The direct estimates give
\begin{equation}\label{eq:E_rho_bound}
    \|\mathcal E_\rho(t,\vec w_\rho)\|_{L^\infty}
    \lesssim_{I,b}\alpha^3t^{-\frac{5}{4}+\beta}.
\end{equation}
We claim that
\begin{equation}\label{eq:partial_E_rho_bound}
    \|\partial_\rho\mathcal E_\rho(t,\vec w_\rho)\|_{L^\infty}
    \lesssim_{I,b}\alpha^3t^{-\frac{5}{4}+\beta+\gamma}.
\end{equation}
Since $w_\rho(t,0)=z_\rho(t,0)=0$, Propositions
\ref{prop:asy_V} and \ref{prop:partial_V_est} imply
\begin{align}
    \left\|V_\rho(t)w_\rho -\vec S_\rho\cdot\vec w_\rho
    \right\|_{L^\infty}&\lesssim_I  t^{-\frac{1}{4}}\|w_\rho\|_{H^1},
    \label{eq:V_forward_remainder}\\
    \left\| \partial_\rho\left( V_\rho(t)w_\rho -\vec S_\rho\cdot\vec w_\rho \right) \right\|_{L^\infty} &\lesssim_I t^{-\frac{1}{4}}\left(\|w_\rho\|_{H^1}+\|z_\rho\|_{H^1}
    \right). \label{eq:partial_V_forward_remainder}
\end{align}
Moreover,
\begin{align}
    \|V_\rho(t)w_\rho\|_{L^\infty} +\|\vec S_\rho\cdot\vec w_\rho\|_{L^\infty}  &\lesssim_{I,b}\alpha,
    \label{eq:V_forward_uniform}\\
    \left\| \partial_\rho \left(\vec S_\rho\cdot\vec w_\rho \right) \right\|_{L^\infty}
    &\lesssim_I \|w_\rho\|_{L^\infty}+\|z_\rho\|_{L^\infty}
    \lesssim_{I,b}\alpha t^\gamma. \label{eq:partial_Sw_uniform}
\end{align}
The elementary cubic estimate therefore yields
\begin{align*}
    &\left|t^{-1} \partial_\rho \left\{ |V_\rho w_\rho|^2V_\rho w_\rho
    -|\vec S_\rho\cdot\vec w_\rho|^2\vec S_\rho\cdot\vec w_\rho
    \right\} \right|\\
    &\qquad \lesssim t^{-1} \left(|V_\rho w_\rho|+|\vec S_\rho\cdot\vec w_\rho|\right)\left| V_\rho w_\rho-\vec S_\rho\cdot\vec w_\rho\right|\left| \partial_\rho(\vec S_\rho\cdot\vec w_\rho) \right| +|V_\rho w_\rho|^2
    \left| \partial_\rho\left(V_\rho w_\rho-\vec S_\rho\cdot\vec w_\rho \right)\right|\\
    &\qquad \lesssim_{I,b}
    \alpha^3t^{-\frac{5}{4}+\beta+\gamma}.
\end{align*}
It remains to estimate the contribution from the approximation of
$V_\rho(t)^{-1}$. By Proposition \ref{prop:partial_V_est}, this contribution, combined with the $t^{-1}$ in the term, is bounded by
\begin{align*}
    t^{-\frac{5}{4}}\left\||V_\rho(t)w_\rho|^2V_\rho(t)w_\rho
    \right\|_{H^1}+t^{-\frac{5}{4}} \left\|\partial_\rho
    \left(|V_\rho(t)w_\rho|^2V_\rho(t)w_\rho\right)
    \right\|_{H^1}.
\end{align*}
Proposition \ref{prop:partial_V_est} and the estimates for $w_\rho$ and
$z_\rho$ give
\begin{align*}
    \|V_\rho(t)w_\rho\|_{H^1}
    &\lesssim_I\|w_\rho\|_{H^1}
    \lesssim_{I,b}\alpha t^\beta,\\
    \|\partial_\rho(V_\rho(t)w_\rho)\|_{H^1}
    &\lesssim_I
    \|w_\rho\|_{H^1}
    +\|z_\rho\|_{H^1}
    \lesssim_{I,b}\alpha t^{\beta+\gamma},\\
    \|\partial_\rho(V_\rho(t)w_\rho)\|_{L^\infty}
    &\lesssim_{I,b}\alpha t^\gamma.
\end{align*}
Consequently, these estimates with Propositions \ref{prop:asy_V}, \ref{prop:homo_sobolev_est} and \ref{prop:partial_V_est}, combined with \eqref{eq:partial_V_forward_remainder} proves \eqref{eq:partial_E_rho_bound}. Combining \eqref{eq:partial_D_rho}, \eqref{eq:E_rho_bound}, and
\eqref{eq:partial_E_rho_bound}, and using
$\beta+\gamma<\frac{1}{4}$, we obtain
\[
    \int_1^\infty  \|\partial_\rho(\mathcal D_{\rho,b}B_\rho^*\mathcal E_\rho)(t) \|_{L^\infty}\,dt
    \lesssim_{I,b}\alpha^3.
\]
Together with \eqref{eq:parameter_localized_integrable} and the small-time
estimate, this proves \eqref{eq:parameter_scattering_remainder}.  The
fundamental theorem of calculus in $\rho$ gives
\eqref{eq:remainder_difference_q}.
\end{proof}

\medskip

The testing vector used in the recovery of the coefficient is
\[
    \vec h_{q,\psi}:=L_q\psi.
\]
Unitarity and \eqref{eq:dft_est_3} imply
\begin{equation}\label{eq:hpsi_L2_L1}
    \|\vec h_{q,\psi}\|_{L^2\times L^2}\lesssim\|\psi\|_{L^2},
    \qquad
    \|\vec h_{q,\psi}\|_{L^1\times L^1}\lesssim_q\|\psi\|_{H^1}.
\end{equation}
Indeed,
$\|\dft_q\psi\|_{L^1}\lesssim
\|\dft_q\psi\|_{L^2}+\|\xi\dft_q\psi\|_{L^2}
\lesssim_q\|\psi\|_{H^1}$.

\medskip

We record a direct corollary from \eqref{eq:remainder_difference_q}:

\begin{corollary}\label{cor:difference_q}
For $q,\widetilde q\in I$ and $\varepsilon\|\psi\|_X$ sufficiently small,
\begin{equation}\label{eq:q_map_pairing_general}
\begin{split}
    \left|\left\langle
       S_{q,b}(\varepsilon\psi)-S_{\widetilde q,b}(\varepsilon\psi),
       \vec h_{q,\psi}\right\rangle\right|\lesssim_{I,b}|q-\widetilde q|\bigl(\varepsilon\|\psi\|_{L^2}^2
    +\varepsilon^3\|\psi\|_X^4\bigr).
\end{split}
\end{equation}
Consequently,
\begin{equation}\label{eq:q_map_pairing_packet}
\begin{split}
    \left|\left\langle
       S_{q,b}(\varepsilon\psi_{\nu,x_0,h})
       -S_{\widetilde q,b}(\varepsilon\psi_{\nu,x_0,h}),
       \vec h_{q,\psi_{\nu,x_0,h}} \right\rangle\right|\lesssim_{q,\widetilde q,a,b,\eta}
      \|S_{q,a}-S_{\widetilde q,b}\|\bigl(\varepsilon h^{\frac{1}{2}} +\varepsilon^3\nu^4h\bigr).
\end{split}
\end{equation}
\end{corollary}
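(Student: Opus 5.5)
The plan is to expand both maps with Lemma~\ref{lem:linearization_scattering_map} and estimate the linear and cubic parts separately. Take $I:=[\min\{q,\widetilde q\},\max\{q,\widetilde q\}]$, so that all constants of Subsection~\ref{subsec:parameter_dependence} are uniform on $I$. By \eqref{eq:L_qa},
\[
S_{q,b}(\varepsilon\psi)-S_{\widetilde q,b}(\varepsilon\psi)
=\varepsilon\,(L_q-L_{\widetilde q})\psi+\bigl(\mathcal R_{q,b}(\varepsilon,\psi)-\mathcal R_{\widetilde q,b}(\varepsilon,\psi)\bigr).
\]
We then pair each piece with $\vec h_{q,\psi}$, using the $L^2\times L^2$ bound from \eqref{eq:hpsi_L2_L1} for the first piece and the $L^1\times L^1$ bound for the second.

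\textbf{Linear part.} Write $L_\rho\psi=B_\rho^*\overrightarrow{\dft_\rho\psi}$ and apply the fundamental theorem of calculus in $\rho\in I$. The derivative is
\[
\partial_\rho L_\rho\psi=(\partial_\rho B_\rho^*)\overrightarrow{\dft_\rho\psi}+B_\rho^*\overrightarrow{(\partial_\rho\dft_\rho)\psi}.
\]
Since $\refl$ is an isometry on $L^2$, we can bound the two terms as follows:
\begin{itemize}
\item the first term is controlled in $L^2\times L^2$ by \eqref{eq:partial_B_bound} together with the unitarity of $\dft_\rho$;
\item the second term is controlled by the pointwise unitarity of $B_\rho$ together with \eqref{eq:partial_q_dft}.
\end{itemize}
This gives
\[
\|(L_q-L_{\widetilde q})\psi\|_{L^2\times L^2}\lesssim_I|q-\widetilde q|\,\|\psi\|_{L^2}.
\]
Cauchy--Schwarz with $\|\vec h_{q,\psi}\|_{L^2\times L^2}\lesssim\|\psi\|_{L^2}$ then produces the term $|q-\widetilde q|\,\varepsilon\|\psi\|_{L^2}^2$.

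\textbf{Cubic part.} Here we use \eqref{eq:remainder_difference_q}, which is valid once $\varepsilon\|\psi\|_X$ is small. It gives an $L^\infty\times L^\infty$ bound $|q-\widetilde q|\,\varepsilon^3\|\psi\|_X^3$. Pairing against $\vec h_{q,\psi}$ with the bound $\|\vec h_{q,\psi}\|_{L^1\times L^1}\lesssim_q\|\psi\|_{H^1}\le\|\psi\|_X$ yields $|q-\widetilde q|\,\varepsilon^3\|\psi\|_X^4$. Adding the two contributions proves \eqref{eq:q_map_pairing_general}.

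\textbf{Packet version.} For \eqref{eq:q_map_pairing_packet}, specialize to $\psi=\psi_{\nu,x_0,h}$ in the regime of Proposition~\ref{prop:primitive_difference}. By \eqref{eq:scale_X},
\[
\|\psi_{\nu,x_0,h}\|_{L^2}^2\lesssim_\eta h^{1/2},\qquad \|\psi_{\nu,x_0,h}\|_X^4\lesssim_\eta\nu^4h.
\]
The hypothesis $\varepsilon\nu h^{1/4}\le\varepsilon_{a,b}$, possibly after shrinking the threshold, ensures that $\varepsilon\|\psi_{\nu,x_0,h}\|_X$ is small enough for \eqref{eq:remainder_difference_q} to apply. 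Finally, Proposition~\ref{prop:stability_q} replaces $|q-\widetilde q|$ by $C_{q,\widetilde q}\|S_{q,a}-S_{\widetilde q,b}\|$.

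\textbf{Main obstacle.} The only delicate point is the smallness bookkeeping: \eqref{eq:remainder_difference_q} needs $\varepsilon\|\psi\|_X$ small uniformly on $I$. Since $I$ is determined by $q$ and $\widetilde q$, the resulting threshold depends only on $q,\widetilde q,b,\beta,\gamma$, and this dependence is absorbed into the implicit constants. Everything else is a direct combination of the results already established.
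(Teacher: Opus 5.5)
Your proposal is correct and follows the paper's proof. You split each map via \eqref{eq:L_qa} into $\varepsilon L_\rho\psi$ plus the cubic remainder, pair the linear difference in $L^2\times L^2$ and the remainder difference \eqref{eq:remainder_difference_q} in $L^1\times L^1$ using \eqref{eq:hpsi_L2_L1}, and then insert \eqref{eq:scale_X} and Proposition \ref{prop:stability_q}. The only cosmetic difference is in the Lipschitz bound for $L_q$: you integrate $\partial_\rho L_\rho$ using \eqref{eq:partial_B_bound} and \eqref{eq:partial_q_dft}, whereas the paper uses the explicit formula for $T_q-T_{\widetilde q}$ together with the half-line formulas, and your version also covers general $\psi$ rather than only $\psi$ supported in $(0,\infty)$.
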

\begin{proof}
The half-line formulas in the proof of Proposition \ref{prop:recover_q} and
\[
    T_q(\xi)-T_{\widetilde q}(\xi)
    =R_q(\xi)-R_{\widetilde q}(\xi)
    =\frac{2i\xi(q-\widetilde q)}
       {(2i\xi-q)(2i\xi-\widetilde q)}
\]
give
\begin{equation}\label{eq:Lq_operator_lipschitz}
    \|(L_q-L_{\widetilde q})\psi\|_{L^2\times L^2}
    \lesssim_{I}|q-\widetilde q|\|\psi\|_{L^2}.
\end{equation}
Decompose the two maps, apply
\eqref{eq:Lq_operator_lipschitz}, \eqref{eq:remainder_difference_q}, and
\eqref{eq:hpsi_L2_L1}, and obtain \eqref{eq:q_map_pairing_general}.  Proposition
\ref{prop:stability_q} and the packet bounds \eqref{eq:scale_X} then imply \eqref{eq:q_map_pairing_packet}.
\end{proof}

\medskip

By definition \eqref{def:SaSb}, 
    \begin{align*}
        \left|\left \la \,\mapa(\varepsilon\psi)-\mapb(\varepsilon\psi) ,\,\vec{h}_\psi\right\ra\right|\lesssim \big\|\mapa-\mapb\big\|\varepsilon\|\psi\|_\Sigma \|\psi\|_{L^2}.
    \end{align*}
Combining Proposition \ref{prop:primitive_difference} and Corollary \ref{cor:difference_q}, we bound
\begin{align*}
    &\left|\left\la\mapa(\varepsilon\testhi)-\mapc(\varepsilon\testhi),\vec h_{\testhi}\right\ra\right|\\
    &\qquad \qquad  \lesssim_\eta \big\|\mapa-\mapb\big\|\varepsilon \nu h^\frac{1}{2}\,\, +\,\, C_{q,\tilde q,a,b} \big\|\mapa-\mapb\big\|\varepsilon h^\frac{1}{2}\,\,+\,\,C_{q,\tilde q, b}\big\|\mapa-\mapb\big\|\varepsilon^3 \nu^4 h.
\end{align*}

\medskip

We now prove Theorem \ref{thm:stability}. 
\begin{proof}[Proof of Theorem \ref{thm:stability}]
    We first assume that $\|\mapa-\mapb\|\leq \kappa_{q,\tilde q, a,b}$, where $\kappa_{q,\tilde q, a,b}>0$ will be chosen sufficiently small. Set
    \begin{align*}
    h=\|\mapa-\mapb\|^\frac{1}{116},\qquad \nu=\|\mapa-\mapb\|^\frac{-13}{116},\qquad \varepsilon=\|\mapa-\mapb\|^\frac{45}{116}.
\end{align*}
    Then,
    \[
    \nu h^3=\|\mapa-\mapb\|^\frac{-10}{116} \to \infty, \quad \nu h=\|\mapa-\mapb\|^\frac{-12}{116} \to \infty,\quad \text{as }\|\mapa-\mapb\|\to 0.
    \]
    Moreover, 
    \[
    \varepsilon \nu h^\frac{1}{4}=\|\mapa-\mapb\|^\frac{129}{464}\to 0,\quad \text{as }\|\mapa-\mapb\|\to 0.
    \]
    Thus, after decreasing $\kappa_{a,b}$, if necessary, all the hypotheses of Proposition \ref{prop:primitive_difference} are satisfied. The three principal errors balance:
    \[
    \|\mapa-\mapb\|\varepsilon^{-2}\nu^2h^{\frac{1}{2}}=\varepsilon^2 \nu^7 h^{\frac{3}{2}}= \left(\nu h^3\right)^{-\frac{1}{20}} =\|\mapa-\mapb\|^\frac{1}{232}. 
    \]
    The remaining terms satisfy
    \[
    \nu^{-\frac{3}{4}}h^{-\frac{5}{4}}=\|\mapa-\mapb\|^\frac{17}{232},\quad \nu^{-3}h^{-5}=\|\mapa-\mapb\|^\frac{68}{232}, \quad h=\|\mapa-\mapb\|^\frac{1}{116},
    \]
    and the two additional terms from changing $q$ to $\tilde q$ contributes
    \[
     \big\|\mapa-\mapb\big\|\varepsilon^{-2}\nu h^\frac{1}{2}=\big\|\mapa-\mapb\big\|^\frac{27}{232},\quad \big\|\mapa-\mapb\big\| \nu^5 h=\big\|\mapa-\mapb\big\|^\frac{13}{29}.
    \]
    Therefore,
    \begin{align*}
     \sup_{3h\leq x_0\leq h^{-1}}\left|\mathcal{A}_+(x_0)-\mathcal{B}_+(x_0)\right|\lesssim_\eta C_{a,b} \|\mapa-\mapb\|^\frac{1}{232}.
    \end{align*}
    When $\|\mapa-\mapb\|=0$, we carry out the preceding argument with an arbitrary parameter $\rho>0$ in place of $\|\mapa-\mapb\|$ and let $\rho\to 0$.
    
    For $x_0>h^{-1}$, Cauchy–Schwarz gives
    \begin{align*}
        \left|\mathcal{A}_+(x_0)-\mathcal{B}_+(x_0)\right|\leq \int_{x_0}^\infty |a-b|(y)dy\lesssim C_{a,b}h^{\frac{1}{2}}\lesssim C_{a,b}\|\mapa-\mapb\|^\frac{1}{232}.
    \end{align*}
    For $0<x_0\leq 3h$, we bound by
    \begin{align*}
    \left|\mathcal{A}_+(x_0)-\mathcal{B}_+(x_0)\right|\leq \left|\mathcal{A}_+(3h))-\mathcal{B}_+(3h)\right|+ 3h\|a-b\|_{L^\infty}\lesssim C_{a,b}\|\mapa-\mapb\|^\frac{1}{232}.
    \end{align*}
    Hence, we proved
    \begin{equation}
       \big\|\mathcal{A}_+-\mathcal{B}_+\|_{L^\infty(\R_+)}\lesssim C_{a,b}\|\mapa-\mapb\|^\frac{1}{232}.
    \end{equation}
    Testing instead with left-moving packets supported in $(-\infty,0)$ gives
    \begin{equation}
       \big\|\mathcal{A}_--\mathcal{B}_-\|_{L^\infty(\R_-)}\lesssim C_{a,b}\|\mapa-\mapb\|^\frac{1}{232}.
    \end{equation}
    This proves \eqref{eq:stability_A_B} with $\theta_0=\frac{1}{232}$. 

    \medskip

    Finally, suppose that $a',b'\in L^2(\R)$. The one-dimensional Gagliardo–Nirenberg interpolation inequality gives
    \begin{equation}
        \|a-b\|_{L^\infty(\R_+)}\lesssim \|a'-b'\|_{L^2(\R_+)}^\frac{2}{3}\big\|\mathcal{A}_+-\mathcal{B}_+\big\|_{L^\infty(\R_+)}^\frac{1}{3}.
    \end{equation}
    The corresponding estimate on $\R_-$ yields
    \begin{equation}
        \|a-b\|_{L^\infty(\R)}\lesssim C_{a,b} \|\mapa-\mapb\|^\frac{1}{696},
    \end{equation}
    where $C_{a,b}$ also depends on $\|a'\|_{L^2}$ and $\|b'\|_{L^2}$. 
\end{proof}


\end{document}